\newif\ifdraft
\ifdraft\usepackage[notref,notcite]{showkeys}\fi
\def\b{\boldsymbol}
\def\e{\epsilon}
\newcommand{\tcb}[1]{\textcolor{blue}{#1}}
\DeclareMathOperator{\tr}{tr}
\newtheorem{thm}{Theorem}
\newtheorem{lmm}{Lemma}
\newtheorem{pro}{Proposition}
\newtheorem{rmk}{Remark}
\newtheorem{cor}{Corollary}
\newtheorem{definition}{Definition}
\newcommand\blfootnote[1]{%
  \begingroup
  \renewcommand\thefootnote{}\footnote{#1}%
  \addtocounter{footnote}{-1}%
  \endgroup
}
\begin{document}

\title{$p$-Euler equations and $p$-Navier-Stokes equations}

\author{Lei Li}
\address{\hskip-\parindent
Lei Li\\
Department of Mathematics\\
Duke University\\
Durham, NC 27708, USA
}
\email{leili@math.duke.edu}

\author{Jian-Guo Liu}
\address{\hskip-\parindent
Jian-Guo Liu\\
Departments of Physics and Mathematics\\
Duke University\\
Durham, NC 27708, USA
}
\email{jliu@phy.duke.edu}

\begin{abstract}
We propose in this work new systems of equations which we call $p$-Euler equations and $p$-Navier-Stokes equations. $p$-Euler equations are derived as the Euler-Lagrange equations for the action represented by the Benamou-Brenier characterization of Wasserstein-$p$ distances, with incompressibility constraint. $p$-Euler equations have similar structures with the usual Euler equations but the `momentum' is the signed ($p-1$)-th power of the velocity. In the 2D case, the $p$-Euler equations have streamfunction-vorticity formulation, where the vorticity is given by the $p$-Laplacian of the streamfunction. By adding diffusion presented by $\gamma$-Laplacian of the velocity, we obtain what we call $p$-Navier-Stokes equations. If $\gamma=p$, the {\it a priori} energy estimates for the velocity and momentum have dual symmetries. Using these energy estimates and a time-shift estimate, we show the global existence of weak solutions for the $p$-Navier-Stokes equations in $\mathbb{R}^d$ for $\gamma=p$ and $p\ge d\ge 2$ through a compactness criterion.
\end{abstract}

\maketitle

\blfootnote{2010 {\it Mathematics Subject Classification}. Primary 49K30, 35Q35. Secondary 76D03} 
\blfootnote{{\it Key words and phrases.} Wasserstein-$p$ geodesics; Benamou-Brenier functional; $p$-momentum; $p$-Laplacian; global weak solutions}

\section{Introduction}

The Wasserstein distances \cite{gangbomccann95,bb99,villani08,santambrogio15} for probability measures in a domain $O\subset\mathbb{R}^d$ are closely related to optimal transport and are useful for image processing \cite{ss2013}, machine learning \cite{acb17} and fluid mechanics \cite{bb00}. If $O$ is convex and bounded, the Wasserstein-$p$ ($p>1$) distance between two probability measures $\mu,\nu$ in $O$ can be reformulated as the following optimization problem \cite[Sec. 5.4]{santambrogio15}:
\begin{gather*}
W_p^p(\mu, \nu)
=\min_{\rho,m}\left\{\int_0^1 p\mathscr{B}_p(\rho, m)dt: \partial_t\rho+\nabla\cdot m=0, \rho|_{t=0}=\mu, \rho|_{t=1}=\nu \right\},
\end{gather*}
where $\rho$ is a nonnegative measure and $m$ is a vector measure, both of which are time-dependent. $\mathscr{B}_p$ is the Benamou-Brenier functional, and see Equation \eqref{eq:bbfunctional} for the expression  in the case $m\ll \rho$ (i.e. $m$ is absolutely continuous with respect to $\rho$). This Benamou-Brenier characterization of Wasserstein distances provides a least action principle framework for us to study Wasserstein geodesics. 

In applications like image processing, one usually wants to find the geodesics between two shapes using some suitable action \cite{ss2013}. In \cite{lps16}, Liu et al. considered two shapes (open connected sets) $\Omega_0$ and $\Omega_1$ in $O$ with equal volume $|\Omega_0|=|\Omega_1|$. Assigning the two shapes with uniform probability measures
\[
\mu=\frac{1}{|\Omega_0|}\chi(\Omega_0),~\nu=\frac{1}{|\Omega_1|}\chi(\Omega_1),
\]
where $\chi(E)$ for a set $E$ means the characteristic function, the Wasserstein-$p$ distance between these two shapes is defined as the Wasserstein-$p$ distance between $\mu$ and $\nu$. The authors of \cite{lps16} considered the geodesics between $\Omega_0$ and $\Omega_1$ with the action represented by the Benamou-Brenier characterization of  Wasserstein-$2$ distance under incompressibility constraint. In other words, they studied the action
\[
\mathcal{A}=\frac{1}{2}\int_0^1\int_{O}\rho |v|^2 \,dx dt,
\]
with the constraint
\[
|\Omega_t|=|\Omega_0|,~\rho(\cdot, t)=\frac{1}{|\Omega_t|}\chi(\Omega_t),~\forall t\in[0,1].
\]
Note that the action $\mathcal{A}$ here is different from the one used in \cite{lps16} by a multiplicative constant, to be consistent with our convention in this paper. They found that the Euler-Lagrange equations for the geodesics under this constraint are the incompressible, irrotational Euler equations with free boundary. They proved that the distance between two shapes under this notion with incompressibility constraint is equal to the Wasserstein-$2$ distance.

The work in \cite{lps16} is related to Arnold's least action principle \cite{arnold66}, where Arnold discovered that the Euler equations of inviscid fluid flow can be viewed as the geodesic path in the group of volume-preserving diffeomorphisms in a fixed domain. One of the differences is that the equations in \cite{lps16} are irrotational with free boundary. The free boundary problems for incompressible Euler equations are waterwave equations which have attracted a lot of attention \cite{stokes1847,bhl93,wu97,wu99,shatahzeng08}. In \cite{wu97,wu99}, Wu proved the wellposedness of waterwave problems in Sobolev spaces with general data for irrotational, no surface tension cases. In \cite{shatahzeng08}, Shatah and Zeng solved the zero tension limit problem with the observation that the Lagrange multiplier part of the pressure can be interpreted as the second fundamental form of the manifold of the flow map.

In this paper, following \cite{lps16}, we derive the Euler-Lagrange equation for the action represented by the Benamou-Brenier characterization of Wasserstein-$p$ distance ($p>1$) between two shapes with the incompressibility constraint. The resulted equations (Equations \eqref{eq:pEuler})  have similar structures with Euler equations for $x\in \Omega_t$:
\begin{gather*}
\left\{
\begin{split}
&\partial_t v_p+v\cdot\nabla v_p=-\nabla \pi ,\\
& v_p=|v|^{p-2}v,\\
&\nabla\cdot v=0,\\
\end{split}\right.
\end{gather*} 
and thus we call them $p$-Euler equations. Here $v$ is the Eulerian velocity field for the particles along the geodesics while $v_p$, which we call the momentum, is the signed power of velocity $v$. $\pi$ is a scalar field which plays the role of pressure as in the usual Euler equations. The $p$-Euler equations for the geodesics are irrotational in the sense that $\nabla\times v_p=0$.

If we define the Lagrangian and Hamiltonian respectively as (Equations \eqref{eq:lag} and \eqref{eq:hal}):
\[
L(v)=\int_{\Omega}\frac{1}{p}|v|^p dx,~~
H(v_p)=\int_{\Omega}\frac{1}{q}|v_p|^qdx,
\]
where $q$ is the conjugate index of $p$ ($1/p+1/q=1$), then $pL=qH$ and we have then the dual symmetry (Equation \eqref{eq:dualsym})
\[
v_p=\frac{\delta L}{\delta v},~v=\frac{\delta H}{\delta v_p}.
\]
Further, the $p$-vorticity
\[
\omega_p:=\nabla\times v_p,
\] 
evolves like a material transported by the velocity field:
\[
\partial_t\omega_p+v\cdot\nabla\omega_p=(\omega_p\cdot\nabla)v.
\]
If the flow is irrotational in the sense that $\omega_p=0$, then $v_p=\nabla\phi$ and we have the following Bernoulli equations (Proposition \ref{pro:formulationofpEuler})
\begin{gather*}
\left\{
\begin{split}
&\partial_t\phi+\frac{1}{q}|\nabla \phi|^q+\pi=c(t),\\
&-\Delta_q\phi=-\nabla\cdot(|\nabla\phi|^{q-2}\nabla\phi)=0,
\end{split}
\right.
\end{gather*}
for some arbitrary function $c(t)$. The $q$-Laplacian of the potential is zero.

For the free boundary problems, we use Noether's theorem to find the conservation of Hamiltonian $H$ and total momentum, angular momentum, an analogy of helicity and  Kelvin's circulation (see Section \ref{sub:symm}):
\[
\frac{d}{dt}H=0,~\frac{d}{dt}\int_{\Omega_t}v_p\, dx=0,
~\frac{d}{dt}\int_{\Omega_t}x\times v_p\, dx=0,
~\frac{d}{dt}\int_{\Omega_t}\omega_p\cdot v_p\, dx=0,
~\frac{d}{dt}\int_{\tilde{\gamma}}v_p\cdot \tau\, ds=0, 
\]
where in the last integral $\tilde{\gamma}$ is a material closed curve while $\tau=\frac{d}{ds}\tilde{\gamma}$ is the unit tangent vector with $s$ being the arc length parameter. 
For a fixed boundary, we do not have conservation of momentum and conservation of angular momentum. However, the conservation of Hamiltonian, helicity and circulation are still valid.

When we consider the 2D $p$-Euler equations, we have a streamfunction-vorticity formulation for $x\in\Omega$:
\begin{gather*}
\left\{
\begin{split}
& \partial_t\omega_p+v\cdot\nabla\omega_p=0,\\
&-\Delta_p\psi=-\nabla\cdot(|\nabla\psi|^{p-2}\nabla\psi)=\omega_p,\\
&v=\nabla^{\perp}\psi. \\
\end{split}
\right.
\end{gather*}
The vorticity becomes $p$-Laplacian of the stream function. This system of equations share similarities with the surface quasi-geostrophic equations, where the $p$-Laplacian is replaced by a fractional Laplacian \cite{cmt94}
\[
-(-\Delta)^{1/2}\psi=\omega.
\]
The well-posedness of critical 2D dissipative quasi-geostrophic equation was shown in \cite{knv07,cv10}. Another related system is the semi-geostrophic equations where the relation between $\psi$ and $\omega$ is given by a Monge-Ampe\'re equation \cite{loeper06}:
\[
\mathrm{det}\,(I+\nabla^2\psi)=\omega.
\]

Adding viscous term given by $\gamma$-Laplacian of velocity,  which is reminiscent of the shear-thinning or thickening velocity-dependent viscosity for non-Newtonian fluids \cite{los69,leibeson83,breit2017}, we obtain what we call $p$-Navier-Stokes ($p$-NS) equations (Equation \eqref{eq:pNS}) for $x\in\Omega\subset\mathbb{R}^d$:
\begin{gather*}
\left\{
\begin{split}
&\displaystyle\partial_t v_p+v\cdot\nabla v_p=-\nabla \pi+\nu \Delta_{\gamma} v,\\
&\displaystyle v_p=|v|^{p-2}v,\\
&\displaystyle\nabla\cdot v=0.\\
\end{split}
\right.
\end{gather*}
Here, $\nu>0$ and $\gamma>1$ are constants. Note that the name `$p$-Navier-Stokes equations' is reminiscent of the models for non-Newtonian fluids studied by Breit in \cite{breit2015,breit2017} based on a power law model for the viscosity term (see Remark \ref{rmk:nonnewtonian} for more details) \footnote{\tcb{Actually, the name `$p$-Navier-Stokes equations' has been used by Breit in his talk slides for some of these models: \url{http://www.macs.hw.ac.uk/~b13/Habil.pdf.}} }. 
The $p$-NS equations have scaling invariance (see Section \ref{subsec:scale}) and therefore may admit self similar solutions. By studying self-similar solutions with $L^2(\mathbb{R}^3)$ initial data, Jia and Sverak in \cite{jiasverak15} studied the nonuniqueness of Leray-Hopf weak solutions of the 3D Navier-Stokes equations.

The parameter $\gamma$ measures the strength of diffusion.  $\gamma\in(1,2)$ corresponds to fast diffusion while $\gamma\in (2,\infty)$ corresponds to slow diffusion. Special cases include $\gamma=2, \gamma=p$. For $\gamma=2$ and in 2D case, the $p$-NS equations can be recast in a streamfunction-vorticity formulation (see Section \ref{sec:pns}). In the $\gamma=p$ cases, the $p$-NS equations have dual symmetry (Equation \eqref{eq:dualsym})  for the momentum and velocity. In particular, the dual symmetry gives the following energy-dissipation relation
 \[
 \frac{dH}{dt}=-\nu \int_{\Omega} |\nabla v|^p dx,
 \] 
 and from which we observe the important {\it a priori} energy estimates
 for $v_0\in L^p(\Omega)$:
\begin{gather*}
v\in L^{\infty}(0,T; L^p(\Omega))\cap L^p(0,T; W^{1,p}(\Omega)), ~ v_p\in L^{\infty}(0,T; L^q(\Omega))\cap L^q(0,T; W^{1,q}(\Omega)),
\end{gather*} 
where $q$ is the conjugate index of $p$.  For $\Omega=\mathbb{R}^d$, we have a time-shift estimate  (Lemma \ref{lmm:timeestimate}) for the mollified sequence
\[
\|\tau_hv^{\e}-v^\e\|_{L^p(0,T-h; L^p(\mathbb{R}^d))}\to 0,~\text{uniformly as } h\to 0+,
\] 
where the time-shift operator is given by $\tau_h f(t)=f(t+h)$. Using these estimates, we conclude the compactness of $v^{\e}$ in $L^p(0,T; L^p(\mathbb{R}^d))$ by a compactness theorem from \cite{cl12} and show the global existence of weak solutions in $\mathbb{R}^d$ for $p\ge d\ge 2$ with $\gamma=p$ in Section \ref{sec:weaksol}. 

Diffusion with $p$-Laplacian diffusion has been studied by many authors.  The authors of \cite{bernis88} and \cite{mm12} studied the weak solutions of doubly degenerate diffusion equations  and in  \cite{aac10}, Agueh et al. investigated the large time asymptotics of doubly nonlinear diffusion equations. Cong and Liu in \cite{cl16} studied a degenerate $p$-Laplacian Keller-Segel model.

The rest of the paper is organized as follows. In Section \ref{sec:wass}, we give a brief introduction to Wasserstein-$p$ distances. The Benamou-Brenier characterization allows us to derive the Euler-Lagrange equations for the geodesics. We reveal the underlying Hamiltonian structure.  In Section \ref{sec:pEulerIncom}, we derive the Euler-Lagrange equations for the action represented by the Benamou-Brenier functional with the incompressibility constraint. The resulted equations are named as incompressible $p$-Euler equations. The structures are similar as the usual Euler equations but the momentum is replaced with the $p$-momentum that is nonlinear in the velocity. In Section \ref{sec:peulerfixed}, we reveal the variational structures of the $p$-Euler equations on a fixed domain, investigate the conservation of $p$-Hamiltonian and the streamfunction-vorticity formulation. In Section \ref{sec:pns}, we add a viscosity with $\gamma$-Laplacian to obtain the $p$-Navier-Stokes equations. This viscosity is a monotone function of the velocity and physically corresponds to shear-thinning or thickening effects. The special cases include $\gamma=2$ and $\gamma=p$. In Section \ref{sec:weaksol}, we show the global existence of weak solutions for $p$-Navier-Stokes equations with $p$-Laplacian viscosity.

\section{The Wasserstein-$p$ geodesics}\label{sec:wass}

In this section, we first give a brief introduction to the Wasserstein-$p$ distance and its relation to the Benamou-Brenier functional.
Then, we investigate the geodesics for Wasserstein-$p$ distances, which satisfy the pressureless $p$-Euler equations. Some underlying Hamiltonian structure will be discussed.

\subsection{Optimal transport and Wasserstein-$p$ distance}\label{subsec:ot}

Let $O\subset \mathbb{R}^d$ be a domain. Denote $\mathcal{P}(O)$ the set of probability measures on $O$. Let $\mu,\nu\in \mathcal{P}(O)$ and $c: O\times O \mapsto [0,\infty)$ be a cost function. The optimal transport problem is to optimize the following minimization:
\[
\min_{\gamma}\left\{\int_{O\times O} c \,d\gamma \Big| \gamma\in \Pi(\mu,\nu) \right\} ,
\]
where $\Pi(\mu, \nu)$ is the set of `transport plans', i.e. a joint measures on $O\times O$ so that the marginal measures are $\mu$ and $\nu$. If there is a map $T: O\to O$ such that $(I\times T)_{\#}\mu$ minimizes the target function, where $I$ is the identity map and
\[
(I\times T)_{\#}\mu (E):=\mu((I\times T)^{-1}(E)),~\forall E\subset O\times O,~\mathrm{measurable},
\]
 then $T$ is called an optimal transport map.

The Wasserstein-$p$ distance $W_p(\mu, \nu)$ for $p\ge 1$ is defined as \[
W_p(\mu, \nu)=\left(\inf_{\gamma\in \Pi(\mu,\nu)}\int_{O\times O} |x-y|^p d\gamma\right)^{1/p}.
\] 
In other words, $W_p^p(\mu, \nu)$ is the optimal transport cost with cost function $c(x,y)=|x-y|^p$. 

It has been shown in \cite[Chap. 5]{santambrogio15} that the Wasserstein-$p$ ($p\ge 1$) distance between two probability measures $\mu$ and $\nu$ is also given by
\begin{multline}\label{eq:Wp}
W_p^p(\mu, \nu)=\min\left\{\int_0^1\|v_t\|^p_{L^p(\rho)}dt: \partial_t\rho+\nabla\cdot(\rho v)=0, \rho_0=\mu, \rho_1=\nu \right\} \\
=\min\left\{\int_0^1 p\mathscr{B}_p(\rho, m)dt: \partial_t\rho+\nabla\cdot m=0, \rho|_{t=0}=\mu, \rho|_{t=0}=\nu \right\},
\end{multline}
where $\rho$ is a nonnegative measure and $m$ is a vector measure.  $\mathscr{B}_p$ is called the Benamou-Brenier functional. In the case $m\ll \rho$, $v=dm/d\rho$, 
\begin{gather}\label{eq:bbfunctional}
\mathscr{B}_p(\rho,m)=\int_{O}\frac{1}{p}|v|^p \rho(dx).
\end{gather}
As explained in \cite[Chap. 5]{santambrogio15}, $v$ can be understood as the particle velocity.

The minimizer for \eqref{eq:Wp} is called the Wasserstein-$p$ geodesic, which will not change if we consider the cost $c(x,y)=h(x-y)=\frac{1}{p}|x-y|^p$. Suppose $p>1$. The dual function of $h$, or Legendre transform of $h$, is \[
h^*(y)=\sup_{x}\left(x\cdot y-\frac{1}{p}|x|^p\right)=\frac{1}{q}|y|^q,
\]
where $1/p+1/q=1$. If $\mu$ is absolutely continuous with respect to Lebesgue measure, by the well-known result \cite{santambrogio15}, the optimal transport $T$ exists and is given by
 \[
x\mapsto T(x)=x-(\nabla h^*)(\nabla\psi (x))=x-|\nabla\psi(x)|^{q-2}\nabla\psi(x),
\]
for some $\psi$. The function $\psi$ is called Kantorovich potential. 
(From here on, if $\rho$ is absolutely continuous to the Lebesgue measure, and consequently, $\rho(dx)=\tilde{\rho}\, dx$ for some Lebesgue integrable function $\tilde{\rho}$, we may use $\rho$ to mean this function $\tilde{\rho}$ without much confusion.) The velocity of a particle is found to be constant and given by \[
v=T(x)-x=-|\nabla\psi(x)|^{q-2}\nabla\psi(x).
\]
With the Kantorovich potential, we are able to write out the $W_p$ distance as
 \[
W_p^p(\mu, \nu)=\int_0^1\int_{O}\rho |v|^p dx dt=\int_{O} |T(x)-x|^p\mu(dx)=\int_{O} |\nabla\psi(x)|^{q}\mu(dx).
\]

The velocity field $v(x, t=0)=T(x)-x$ is in general not curl free. However, an important observation is that the $p$-momentum
\begin{gather}\label{eq:pvel}
v_p:=T_p(v)=|v|^{p-2}v=-\nabla\psi
\end{gather}
is curl free. $v_p$ is the $(p-1)$-th signed power of $v$.  

\begin{rmk}
If $p=0$, $T_p$ is the Kelvin transform of $v$ about the unit sphere (in this paper, we consider $p>1$). If $p>1$, $T_p$ maps the inside of the unit sphere to the inside while maps outside to the outside. If $p\in (1, 2)$, the mapping is pushing points towards the unit sphere while in the $p>2$ case, the mapping is pushing points away from the unit sphere.
\end{rmk}

To find the optimal transport map from $\mu$ to $\nu$, one may want to solve for the Kantorovich potential $\psi$. Suppose $\mu$ and $\nu$ have densities $\rho_0$ and $\rho_1$ respectively, then by the fact
 \[
\rho_0=\rho_1 \det(J_T(x))
\]
where $J_T$ represent the Jacobian matrix of $T(x)$, we obtain the following equation for the potential $\psi$:
\[
\det\left(I-|\nabla\psi|^{q-2}(I+(q-2)\frac{\nabla\psi\otimes\nabla\psi}{|\nabla\psi|^2})\cdot\nabla\nabla\psi\right)\rho_1\Big(x-|\nabla\psi(x)|^{q-2}\nabla\psi(x)\Big)=\rho_0(x),
\]
where $\nabla\nabla\psi$ is the Hessian of $\psi$ and $\nabla\psi\otimes\nabla\psi$ is the tensor product. This equation can be regarded as the generalized version of the Monge-Ampe\'re equation.

Above we have used the tensor product $\nabla\psi\otimes\nabla\psi$. To be convenient for the discussion later, let us collect some notations for tensor analysis here. Let $a, b\in \mathbb{R}^d$ be vectors and $A, B$ be second order tensors (matrices).  $ab:=a\otimes b$ is a second order tensor, called the tensor product:
\begin{gather}\label{eq:tensorproduct}
(a\otimes b)_{ij}=(ab)_{ij}=a_ib_j.
\end{gather}
We also define the dot product as
\begin{gather}\label{eq:tensordot}
\begin{split}
& (a\cdot A)_i=\sum_{j=1}^d a_j A_{ji},~~(A\cdot a)_i=\sum_{i=1}^d A_{ij}a_j, \\
& (A\cdot B)_{ij}=\sum_{k}A_{ik}B_{kj},\\
&A:B=\tr(A\cdot B^T)=\sum_{ij}A_{ij}B_{ij}.
\end{split}
\end{gather}

\subsection{The Wasserstein-$p$ geodesics}
We now look at a particular optimal transport problem from the uniform distribution on a shape $\Omega_0\subset O$ to another probability measure $\nu$. We derive the equations for the corresponding flow formally. This formal derivation can be generalized to the case with incompressibility constraint to yield the incompressible $p$-Euler equations in Section \ref{sec:pEulerIncom}.

From here on, we will always assume 
\begin{gather}
p>1 .
\end{gather}

Consider an open set $\Omega_0\subset O$ and $\mu$ is the probability measure with density $\frac{1}{|\Omega_0|}1_{\Omega_0}$. Let $\nu$ be another probability measure supported in $\Omega_1\subset O$. Denote the optimal map by $T$. By \eqref{eq:Wp}, the Wasserstein-$p$ geodesic between $\mu$ and $\nu$ is the minimizer of \[
p\int_0^1\mathscr{B}_p(v, m) dt=:p\int_0^1 B_p(\rho, v) dt,
\]
where
\begin{gather}
B_p(\rho, v)=\int_O \frac{\rho}{p} |v|^p dx=\int_{\Omega_t}\frac{\rho}{p} |v|^p dx.
\end{gather}
Here, $\Omega_t$ is the support of $\rho(\cdot, t)$, and forms a sequence of shapes. The geodesic induces a flow of mass. In Section \ref{subsec:ot}, we know that the velocity $v$ for the particle in the optimal transport is constant. Now, we rederive this formally. 

 Assume the flow map is $Z(\cdot, t): \xi\mapsto x=Z(\xi, t)$, where $\xi$ is a Lagrangian coordinate.  Since $\rho$ can be interpreted as the density of particles \cite{santambrogio15}, we have 
\begin{gather}
\rho(x, t)|_{x=Z(\xi, t)}=\rho_0\det\left(\frac{\partial x}{\partial \xi}\right)^{-1},~\rho_0(\xi)=\frac{1}{|\Omega_0|}\chi(\Omega_0).
\end{gather}

Let us consider the cost function,
\begin{gather}\label{eq:actionbb}
\mathcal{A}=\int_0^1B_p(\rho, v) dt=\int_0^1 \frac{1}{p}\int_{\Omega_t} \rho |v|^p dx dt=\frac{1}{p|\Omega_0|}\int_0^1\int_{\Omega_0} |\dot{Z}|^p d\xi dt.
\end{gather}
Taking the variation with $\delta Z|_{t=0}=\delta Z|_{t=1}=0$:
\[
|\Omega_0|\delta \mathcal{A}=\int_0^1\int_{\Omega_0} |\dot{Z}|^{p-2}\dot{Z}\cdot \delta \dot{Z} d\xi dt
=-\int_0^1\int_{\Omega_0} \frac{\partial}{\partial t}\left(|\dot{Z}|^{p-2}\dot{Z}\right)\cdot \delta Z d\xi=0.
\]
Hence, we have
\begin{gather}
\left(I+(p-2)\frac{\dot{Z}}{|\dot{Z}|}\otimes \frac{\dot{Z}}{|\dot{Z}|}\right)\cdot\ddot{Z}=0.
\end{gather}
In Eulerian variables, we let
\begin{gather}
x=Z(\xi, t),~v(x, t)=\dot{Z}(\xi, t)|_{\xi=Z^{-1}(x, t)},
\end{gather}
and have
\begin{gather}
(I+(p-2)\hat{v}\hat{v})\cdot (\partial_tv+v\cdot\nabla v)=0 ,
\end{gather}
where $\hat{v}=v/|v|$ is the unit vector in the direction specified by $v$ and $\hat{v}\hat{v}=\hat{v}\otimes\hat{v}$ as in \eqref{eq:tensorproduct}.

Note that if $p>1$, $I+(p-2)\hat{v}\hat{v}$ is invertible and hence the minimizer satisfies \[
\partial_tv+v\cdot\nabla v=0,
\]
which confirms that particles in the geodesic have constant speed. Hence, for any $p>1$, any optimal flow satisfies the following pressureless compressible Euler equations
\begin{gather}\label{eq:presless}
\left\{
\begin{split}
& \partial_t\rho+\nabla\cdot(\rho v)=0,\\
& \partial_t(\rho v)+\nabla\cdot(\rho v\otimes v)=0.
\end{split}
\right.
\end{gather}
Clearly, for different $p$ values, the velocity fields should be different for the same initial and terminal distribution while they are described by the same equations \eqref{eq:presless}. Note that
\begin{gather}
\frac{d}{dt}\int_{\Omega_t}\frac{\rho}{p} |v|^p dx=0, ~\forall p>1.
\end{gather}
System \eqref{eq:presless} is so special that it conserves all the $p$-moment and it is why the geodesic for any $W_p$ distance satisfies the same system.

The $p$-momentum satisfies 
\begin{gather}
\partial_t v_p+v\cdot\nabla v_p=0,
\end{gather}
and, as we shall see later, the more intrinsic system of equations is the following pressureless $p$-Euler equations
\begin{gather}
\left\{
\begin{split}
& \partial_t\rho+\nabla\cdot(\rho v)=0,\\
& \partial_t(\rho v_p)+\nabla\cdot(\rho v\otimes v_p)=0.
\end{split}
\right.
\end{gather}

\subsection{Underlying Hamiltonian structure}

Consider the following action of a single particle
\begin{gather}
\mathcal{A}=\frac{1}{p}\int_0^1L(x(t), \dot{x}(t))dt=\frac{1}{p}\int_0^1 |\dot{x}|^p dt.
\end{gather}
The Hamiltonian is obtained by the Legendre transform
\begin{gather}
H(x, v_p)=\sup_{v}( v\cdot v_p-L(x, v(v_p, x)))=\frac{1}{q}|v_p|^q,~\frac{1}{p}+\frac{1}{q}=1.
\end{gather}
We can verify that the following Hamilton ODE holds
\begin{gather}
\left\{
\begin{split}
&\displaystyle\dot{x}=\frac{\partial H}{\partial v_p}=|v_p|^{q-2}v_p=v,\\
&\displaystyle\dot{v}_p=-\frac{\partial H}{\partial x}=0.
\end{split}\right.
\end{gather}

Physically speaking, there is no external force. Then, the Hamiltonian and Lagrangian are given by the `kinetical energy' only. $v_p$, as a conjugate variable of $v$, should be understood as the momentum and thus we call $v_p$ the $p$-momentum. 
In the Wasserstein geodesics, all the particles are independent and each of them satisfies this Hamilton ODE.

\section{Incompressible $p$-Euler equations}\label{sec:pEulerIncom}

In this section, we derive the Euler-Lagrange equations for the action represented by the Benamou-Brenier characterization of Wassertein-$p$ distance with the incompressibility constraint.  The derived equations are called $p$-Euler equations. We then study some elementary properties of this system of equations.

\subsection{The variational problem and the Euler-Lagrange equations}
From here on, without loss of generality, we will assume in the variational problem:
\begin{gather}
|\Omega_0|=1.
\end{gather}
Consider $\mu=\chi(\Omega_0)$ and $\nu=\chi(\Omega_1)$ where $|\Omega_1|=|\Omega_0|=1$. The optimal transport from $\mu$ to $\nu$ is the minimizer of \eqref{eq:actionbb}. Now, we instead pursue the minimizer of \eqref{eq:actionbb} with the incompressibility constraint: $|\Omega_t|=|\Omega_0|=1$. In other words, we pursue the minimizer over the sequences of shapes with equal volume.  This motivates us to consider the functional
\begin{gather}\label{eq:actionincompressible}
\mathcal{A}=\int_0^1\int_{\Omega_0}\left( \frac{1}{p}|\dot{Z}|^p+\pi\left(\det\Big(\frac{\partial Z}{\partial \xi}\Big)-1\right) \right) d\xi dt
+\int_{\Omega_0}\lambda \left(\det\Big(\frac{\partial Z}{\partial \xi}\Big)-1\right)|_{t=1} d\xi,
\end{gather}
where $\pi$ and $\lambda$ are Lagrangian multipliers for the constraint that the map is incompressible. Note that we are only asking for $Z(\Omega_0, 1)=\Omega_1$ but $Z(\xi, 1)$ is free to change.

Taking variation $\delta Z$ such that $\delta Z|_{t=0}=0$ and $Z(\Omega_0, 1)=(Z+\delta Z)(\Omega_0, 1)$:
\[
\delta \mathcal{A}=\int_0^1\int_{\Omega_0}\left( |\dot{Z}|^{p-2}\dot{Z}\cdot \delta\dot{Z} 
+\pi\tr\left( \Big(\frac{\partial Z}{\partial \xi}\Big)^{-1}
\frac{\partial \delta Z}{\partial \xi}\right) \right) d\xi dt+\int_{\Omega_0}\lambda\tr\left(\Big(\frac{\partial Z}{\partial \xi}\Big)^{-1}
\frac{\partial \delta Z}{\partial \xi}\right)|_{t=1} d\xi.
\]
Recall that 
\[
v(Z(\xi, t), t)=\dot{Z}(\xi, t),
\]
and we introduce $\delta z$ as
\[
\delta z(Z(\xi, t), t)=\delta Z(\xi, t).
\]
Integration by parts, we have
\begin{multline*}
-\int_0^1\int_{\Omega_t}\delta z\cdot(\partial_t+v\cdot\nabla)(|v|^{p-2}v)\, dxdt
+\int_{\Omega_1}|v|^{p-2}v\cdot \delta z \,dx\\
-\int_0^1\int_{\Omega_t}\delta z\cdot\nabla \pi \, dSdt
+\int_0^1\int_{\partial\Omega_t}\pi \delta z\cdot n \,dSdt
-\int_{\Omega_1}\delta z\cdot\nabla\lambda \,dx+\int_{\partial\Omega_1}\lambda
\delta z\cdot n \,dS=0 .
\end{multline*}
By the fact that $Z(\Omega_0, 1)=\Omega_1$, $\delta z\cdot n=0$ on $\partial\Omega_1$ at $t=1$. As a result, we have the following equation:
\begin{gather}\label{eq:pEuler1}
(\partial_t+v\cdot\nabla)(|v|^{p-2}v)=-\nabla \pi,~x\in \Omega_t, 0<t<1,
\end{gather}
with conditions
\begin{gather*}
|v|^{p-2}v|_{t=1}=\nabla\lambda,\\
\pi=0, \ x\in\partial\Omega_t.
\end{gather*}
The $p$-momentum $v_p=|v|^{p-2}v$ has to be irrotational at $t=1$, which implies that \eqref{eq:pvel} is natural. 

\begin{rmk}
One may be tempted to write this system into the form of usual Euler equations by inverting $I+(p-2)\hat{v}\hat{v}$ (recall that $\hat{v}\hat{v}$ is the tensor product as in \eqref{eq:tensorproduct}):
\begin{gather*}
\left\{
\begin{split}
& \partial_tv+v\cdot\nabla v=-\left(I+\frac{2-p}{p-1}\hat{v}\hat{v}\right)\cdot(|v|^{2-p} \nabla \pi),\\
&\nabla\cdot v=0,
\end{split}
\right.
\end{gather*}
with the boundary condition
\[
\pi=0, ~x\in\partial\Omega_t.
\]
This form, however, is not convenient for us to study.
\end{rmk}

\subsection{The $p$-Euler equations}

In this subsection, we discuss the general properties of system \eqref{eq:pEuler1} in more detail. To be convenient, we write out the system of equations below for $x\in \Omega_t$:
\begin{gather}\label{eq:pEuler}
\left\{
\begin{split}
&\partial_t v_p+v\cdot\nabla v_p=-\nabla \pi,\\
& v_p=|v|^{p-2}v,\\
&\nabla\cdot v=0.
\end{split}
\right.
\end{gather}
Here, we are not specifying the boundary conditions at $\partial\Omega_t$ for general discussion. If we specify $\pi=0,~x\in\partial\Omega_t$, we have the free boundary problem while if we set $\Omega_t=\Omega$, and $v\cdot n=0$ (where $n$ is the outer normal of $\Omega$) for $x\in\partial \Omega$, we have the fixed domain problem. This system is called the (incompressible) $p$-Euler equations due to the similarities with the usual Euler equations.

Consider again the Lagrangian (with $\rho=1$ in $\Omega_t$):
\begin{gather}\label{eq:lag}
L=\int_{\Omega_t}\mathcal{L}(v)\,dx, ~\mathcal{L}(v)=B_p(1, v)=\frac{1}{p}|v|^p,
\end{gather}
and the action $\mathcal{A}=\int_0^1L\, dt$. The Legendre transform of $\mathcal{L}(v)$ is the Hamiltonian
\begin{gather}
\mathcal{H}(u)=\sup_{v}(u\cdot v-\mathcal{L}(v)) .
\end{gather}
The maximum happens when 
\begin{gather}\label{eq:pvel2}
u=|v|^{p-2}v=v_p,
\end{gather}
 and thus 
\begin{gather}
\mathcal{H}(u)=\frac{1}{q}|u|^q, ~\frac{1}{p}+\frac{1}{q}=1.
\end{gather}
Hence, we can introduce the $p$-Hamiltonian for System \eqref{eq:pEuler1}:
\begin{gather}\label{eq:hal}
H(v_p)=\int_{\Omega_t}\mathcal{H}(v_p)dx=\int_{\Omega_t}\frac{1}{q}|v_p|^q dx.
\end{gather}
It is easy to find that
\begin{gather}\label{eq:laghal}
pL(v)=qH(v_p),
\end{gather}
and that the Fr\'echet derivatives have the dual symmetry:
\begin{gather}\label{eq:dualsym}
\frac{\delta L}{\delta v}=v_p,~\frac{\delta H}{\delta v_p}=v.
\end{gather}

In the case $d=2, 3$, we define $p$-vorticity as the curl of $v_p$
\begin{gather}
\omega_p=\nabla\times v_p.
\end{gather}

\begin{pro}\label{pro:formulationofpEuler}
Suppose $(v, \pi)$ is smooth and satisfies the incompressible $p$-Euler equations \eqref{eq:pEuler} for $d=2,3$. Then:

(i).  The momentum equation can be written in conservation form as
\begin{gather}
\partial_t v_p+\nabla\cdot (v_p\otimes v+\pi I)=0.
\end{gather}

(ii). If the flow is steady, then the Bernoulli's law holds: $\pi+\frac{1}{q}|v_p|^q$ is constant along a streamline. In other words,
\begin{gather}
v\cdot\nabla\left(\pi+\frac{1}{q}|v_p|^q\right)=0.
\end{gather}

(iii). The $p$-vorticity satisfies
\begin{gather}
\partial_t\omega_p+v\cdot\nabla\omega_p-(\omega_p\cdot\nabla)v=0.
\end{gather}
Consequently, if the flow is irrotational ($\omega_p=0$) at some time, it is irrotational for all time. For irrotational flows, $v_p=\nabla\phi$ and $\phi$ satisfies the Bernoulli equations
\begin{gather}\label{eq:bernoulli}
\left\{
\begin{split}
&\partial_t\phi+\frac{1}{q}|\nabla \phi|^q+\pi=c(t),\\
&-\Delta_q\phi=-\nabla\cdot(|\nabla\phi|^{q-2}\nabla\phi)=0,
\end{split}
\right.
\end{gather}
where the function $c(t)$ can be picked arbitrarily.
\end{pro}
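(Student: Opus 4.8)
The plan is to route all three parts through a single identity that rewrites the convective term $v\cdot\nabla v_p$ in Lamb (Bernoulli) form; once that identity is available, parts (i)--(iii) run exactly parallel to the classical theory of the incompressible Euler equations. Part (i) is a direct computation: since $\nabla\cdot v=0$, one has $\nabla\cdot(v_p\otimes v)=(v\cdot\nabla)v_p+v_p(\nabla\cdot v)=v\cdot\nabla v_p$ and $\nabla\cdot(\pi I)=\nabla\pi$, so the momentum equation in \eqref{eq:pEuler} is literally $\partial_tv_p+\nabla\cdot(v_p\otimes v+\pi I)=0$.

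The core of the argument is the identity
\[
v\cdot\nabla v_p=\nabla\!\left(\tfrac1q|v_p|^q\right)-v\times\omega_p .
\]
To establish it I would start from the standard vector-calculus identity $\nabla(v\cdot v_p)=(v\cdot\nabla)v_p+(v_p\cdot\nabla)v+v\times(\nabla\times v_p)+v_p\times(\nabla\times v)$ and then use that $v_p=|v|^{p-2}v$ is \emph{parallel} to $v$: writing $\omega=\nabla\times v$, collinearity together with the classical identity $(v\cdot\nabla)v+v\times\omega=\nabla(\tfrac12|v|^2)$ gives $(v_p\cdot\nabla)v+v_p\times\omega=|v|^{p-2}\nabla(\tfrac12|v|^2)=|v|^{p-1}\nabla|v|=\nabla(\tfrac1p|v|^p)$, a pure gradient. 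Combining this with $v\cdot v_p=|v|^p=|v_p|^q$ (since $(p-1)q=p$) and $1-\tfrac1p=\tfrac1q$ yields the claimed identity. I expect this to be the main obstacle — not in difficulty, but in that it is the one place where the special structure of the $p$-momentum (its collinearity with $v$ and the conjugacy $(p-1)q=p$) is genuinely used; everything else is bookkeeping that mirrors incompressible Euler.

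Granting the identity, the momentum equation becomes $\partial_tv_p-v\times\omega_p=-\nabla(\pi+\tfrac1q|v_p|^q)$. For (ii), in a steady flow the time derivative vanishes and taking the dot product with $v$ annihilates $v\times\omega_p$, giving $v\cdot\nabla(\pi+\tfrac1q|v_p|^q)=0$. For (iii), applying the curl and using $\nabla\cdot\omega_p=\nabla\cdot(\nabla\times v_p)=0$ together with $\nabla\cdot v=0$ in the expansion of $\nabla\times(v\times\omega_p)$ produces $\partial_t\omega_p+v\cdot\nabla\omega_p-(\omega_p\cdot\nabla)v=0$ (in $d=2$ the stretching term drops out under the usual convention $\omega_p=(0,0,\omega_p)$). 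This being a linear transport--stretching equation for $\omega_p$ along the trajectories of $v$, the Cauchy-type formula $\omega_p(Z(\xi,t),t)=\nabla_\xi Z(\xi,t)\,\omega_p(\xi,0)$ shows $\omega_p\equiv0$ for all time once it vanishes at one time. On a simply connected domain, $\omega_p=0$ then lets us write $v_p=\nabla\phi$; the Lamb-form momentum equation becomes $\nabla(\partial_t\phi+\tfrac1q|\nabla\phi|^q+\pi)=0$, hence $\partial_t\phi+\tfrac1q|\nabla\phi|^q+\pi=c(t)$ with $c(t)$ arbitrary since $\pi$ is determined only up to an additive function of $t$; and $v=|v_p|^{q-2}v_p=|\nabla\phi|^{q-2}\nabla\phi$ together with $\nabla\cdot v=0$ gives $-\Delta_q\phi=0$, which is \eqref{eq:bernoulli}.
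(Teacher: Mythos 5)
Your proposal is correct and follows essentially the same route as the paper: part (i) from $\nabla\cdot v=0$, the Lamb-form identity $v\cdot\nabla v_p=\nabla\bigl(\tfrac1q|v_p|^q\bigr)-v\times\omega_p$ (the paper states it as $v\cdot\nabla v_p=\tfrac{p-1}{p}\nabla|v|^p-v\times(\nabla\times v_p)$, derived by computing $(\nabla v_p)\cdot v$ directly rather than via the gradient-of-dot-product identity and collinearity, but it is the same identity), then dotting with $v$ for (ii) and taking the curl for (iii). Your extra details — the Cauchy formula for propagation of irrotationality and the simple-connectedness remark for $v_p=\nabla\phi$ — only flesh out steps the paper asserts without proof.
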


\begin{proof}
Statement (i) follows from $\nabla\cdot v=0$.

(ii). If $d=2,3$, we have the following vector identity:
\begin{gather}\label{eq:vectoridentity}
v\cdot\nabla v_p=(\nabla v_p)\cdot v-v\times (\nabla\times v_p)
=\frac{p-1}{p}\nabla|v|^p-v\times (\nabla\times v_p).
\end{gather}
Using \eqref{eq:vectoridentity}, first equation in \eqref{eq:pEuler} is reduced 
\[
\partial_t v_p+\omega_p\times v+\nabla\left(\frac{p-1}{p}|v|^p+\pi \right)=0.
\]
If the flow is steady, $\partial_t v_p=0$. Dotting both sides with $v$ and noticing $\frac{p-1}{p}=\frac{1}{q}$ and $|v|^p=|v_p|^q$, we obtain what is claimed.

(iii). Taking curl in the first equation of System \eqref{eq:pEuler}, and using \eqref{eq:vectoridentity}, we derive the equation for $\omega_p$. According to the equation that $w_p$ satisfies, if $w_p=0$ for some time $t$, then it is zero for all time.

If $w_p=0$ for all time, then $v_p=\nabla\phi$. The first equation of System \eqref{eq:pEuler} is rewritten as
\[
\nabla \left(\partial_t\phi+\frac{p-1}{p}|v|^p+\pi \right)=0.
\]
Since $|v|^p=|v_p|^q=|\nabla\phi|^q$ and $\frac{p-1}{p}=\frac{1}{q}$, we obtain the first equation in \eqref{eq:bernoulli}.
Since $v=|v_p|^{q-2}v_p=|\nabla\phi|^{q-2}\nabla\phi$ and $v$ is divergence free, the second equation follows.
\end{proof}

Proposition \ref{pro:formulationofpEuler} confirms that $v_p$ is the physical momentum.  As a corollary, we have
\begin{cor}
Suppose the minimizer of Problem \eqref{eq:actionincompressible} exists and it induces a diffeomorphism from $\Omega_0$ to $\Omega_t$. For $d=2,3$, $v_p$ is irrotational:
\begin{gather}
\nabla\times v_p=0,~\forall t\in[0,1].
\end{gather}
\end{cor}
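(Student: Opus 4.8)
The plan is to combine two ingredients already in place. First, the Euler--Lagrange analysis that produced \eqref{eq:pEuler1} also produced the terminal condition $|v|^{p-2}v|_{t=1}=\nabla\lambda$, so $v_p(\cdot,1)=\nabla\lambda$ is a gradient and hence $\omega_p(\cdot,1)=\nabla\times\nabla\lambda=0$. Second, Proposition \ref{pro:formulationofpEuler}(iii) shows that, along the flow of a smooth solution of \eqref{eq:pEuler}, the $p$-vorticity obeys the transport--stretching equation $\partial_t\omega_p+v\cdot\nabla\omega_p-(\omega_p\cdot\nabla)v=0$. Thus the corollary reduces to propagating the vanishing of $\omega_p$ from the terminal time $t=1$ backwards to every $t\in[0,1]$.

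To do this I would work in the Lagrangian coordinate $\xi$ via the flow map $Z(\cdot,t)$ of $v$. The hypothesis that the minimizer induces a diffeomorphism $\Omega_0\to\Omega_t$ is precisely what guarantees that $Z(\cdot,t)$ is invertible with smooth inverse for each $t\in[0,1]$, so the characteristics are globally defined and can be traversed in either direction of time. Restricted to a trajectory, the vorticity equation becomes a linear ODE $\frac{d}{dt}\bigl(\omega_p\circ Z\bigr)=M(t)\,(\omega_p\circ Z)$, where $M(t)$ is built from $\nabla v$ evaluated along the trajectory; in the $d=3$ case its solution is Cauchy's vorticity formula $\omega_p(Z(\xi,t),t)=\nabla_\xi Z(\xi,t)\bigl(\nabla_\xi Z(\xi,1)\bigr)^{-1}\omega_p(Z(\xi,1),1)$, while in the $d=2$ case the stretching term $(\omega_p\cdot\nabla)v$ vanishes (since $v$ is planar and $\omega_p$ is directed along $e_3$), so it is pure transport $\frac{d}{dt}(\omega_p\circ Z)=0$. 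In either case, $\omega_p(\cdot,1)\equiv0$ together with uniqueness for the linear ODE forces $\omega_p(\cdot,t)\equiv0$ for all $t$, which is the assertion.

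The genuinely delicate point is regularity and invertibility: Proposition \ref{pro:formulationofpEuler} is stated for smooth $(v,\pi)$, and the characteristic ODE together with its variational equation (needed for the deformation matrix) requires $v\in C^1$ in $x$ uniformly on $[0,1]$, plus a flow map that remains a diffeomorphism up to $t=1$. Both are granted by the standing assumptions of the corollary, so I expect this to be essentially the only obstacle, and it is handled by hypothesis rather than by work. One minor care point remains: \eqref{eq:pEuler} holds for $x\in\Omega_t$ and $0<t<1$, so one first concludes $\omega_p\equiv0$ on the open interval and then extends to the closed interval $[0,1]$ by continuity of $v_p$.
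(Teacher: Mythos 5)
Your proposal is correct and follows essentially the same route as the paper: the terminal condition $v_p|_{t=1}=\nabla\lambda$ gives $\omega_p(\cdot,1)=0$, and the vorticity equation of Proposition \ref{pro:formulationofpEuler}(iii), propagated along the flow map (a diffeomorphism by hypothesis), forces $\omega_p\equiv 0$ on $[0,1]$. You merely spell out in Lagrangian coordinates (Cauchy's formula / linear ODE uniqueness) what the paper's terse proof leaves implicit, which is a fine elaboration rather than a different argument.
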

\begin{proof}
In the case $d=2,3$, by Equations \eqref{eq:pEuler1}, we see that $v_p$ is irrotational at $t=1$. Then, applying the equation of $\omega_p$, we find that $\omega_p$ is zero for all time $t\in [0,1]$ since the flow map is a diffeomorphism. The claim follows.
\end{proof}

\subsection{Influence of Galilean transform}

Since the momentum is nonlinear in the velocity, the influence of Galilean transform on the fluid system could be sophisticated.
Consider the Galilean transform
\begin{gather}
\begin{split}
& y=x+x_0+wt,\\
& \tau=t,
\end{split}
\end{gather}
where $w$ is a constant.  We introduce the new velocity and $p$-momentum
\begin{gather}
u=v+w,~~ u_p=|v+w|^{p-2}(v+w).
\end{gather}
The $p$-Euler equations for the new variables are given by 
\begin{gather}
\left\{
\begin{split}
&\partial_{\tau} u_p+u\cdot\nabla_y u_p=-\nabla_y\theta,\\
&\nabla_y\cdot u=0.
\end{split}
\right.
\end{gather}
Clearly,
\begin{gather}
\nabla_y\cdot u=0 \Leftrightarrow \nabla\cdot v=0.
\end{gather}
Using the relation
\[
\partial_{\tau}=\partial_t-w\cdot\nabla, ~ \nabla_y=\nabla,
\] 
we find 
\begin{multline*}
\partial_{\tau} u_p+u\cdot\nabla_y u_p=(\partial_t+v\cdot\nabla)u_p=(\partial_t+v\cdot\nabla)v\cdot |u|^{p-2}(I+(p-2)\hat{u}\hat{u})\\
=(\partial_t v_p+v\cdot \nabla v_p) \cdot |v|^{2-p}(I+(p-2)\hat{v}\hat{v})^{-1}\cdot |u|^{p-2}(I+(p-2)\hat{u}\hat{u}) .
\end{multline*}
Hence, the `$p$-pressures' are related by 
\begin{gather}\label{eq:pressure}
\nabla\theta\cdot  |u|^{2-p}\left(I+\frac{2-p}{p-1}\hat{u}\hat{u}\right)=\nabla\pi\cdot |v|^{2-p}\left(I+\frac{2-p}{p-1}\hat{v}\hat{v}\right).
\end{gather}
Assuming the $p$-Euler equations are well-posed, we then conclude that the $p$-Euler equations are Galilean invariant with the pressures related by Equation \eqref{eq:pressure}. 

\section{Conservation in free-boundary $p$-Euler equations}\label{sub:symm}

We have seen that the minimizer of  Problem \eqref{eq:actionincompressible} satisfies the free boundary incompressible $p$-Euler equations for $x\in\Omega_t, 0\le t\le 1$:
\begin{gather}\label{eq:pEulerfreeb}
\left\{
\begin{split}
&\partial_t v_p+v\cdot\nabla v_p=-\nabla \pi,\\
& v_p=|v|^{p-2}v,\\
&\nabla\cdot v=0,
\end{split}
\right.
\end{gather}
with boundary condition
\begin{gather}\label{eq:freebcs}
\pi=0,~x\in\partial\Omega_t.
\end{gather}
In this section, we use the action and Noether's first theorem (see \cite{noether71,arnold13}) to reveal some conserved quatities for the free-boundary $p$-Euler equations. Noether's theorem states that a differentiable symmetry of the action for a system induces a conserved quantity. Suppose the Lagrangian is given by 
$L(t, Z, \dot{Z})$ where $Z$ is the dynamics, and $Q$ is the generator of some symmetry (in other words, the action $\int_0^1 L\, dt$ is invariant under the transform $Z\to e^{\epsilon Q}Z$ and $\dot{Z}\to e^{\epsilon Q}\dot{Z}$), then the integral of Noether current is conserved:
\[
\left\langle Q(Z), \frac{\delta L}{\delta \dot{Z}}\right\rangle-F,
\] 
where the pairing is in $L^2$ sense. If we have time shift symmetry, $Q=\frac{d}{dt}$ and $F=L$. Then, the conserved quantity is the Hamiltonian
\[
H=\left\langle \dot{Z},  \frac{\delta L}{\delta\dot{Z}} \right\rangle-L=const.
\]
If $Q$ is not transforming time, then $F=0$ and we have the conserved quantity as
\[
\left\langle Q(Z), \frac{\delta L}{\delta \dot{Z}}\right\rangle.
\]

Recall that in our problem, $Z$ is the flow map, and the action is given by 
\[
\mathcal{A}=\int_0^1\mathcal{L}(t, Z, \dot{Z})\, dt=\int_0^1\int_{\Omega_0}\frac{1}{p}|\dot{Z}|^p \,d\xi dt,\quad \mathcal{L}(t, Z, \dot{Z})=\int_{\Omega_0}\frac{1}{p}|\dot{Z}|^p\, d\xi.
\]
Let us check several examples of symmetry for $d=2,3$:

\begin{itemize}
\item $L$ is independent of time. Then, we have the conservation of Hamiltonian:
\[
\int_{\Omega_0} \dot{Z}\frac{\delta L}{\delta \dot{Z}} \,d\xi-L=\int_{\Omega_0} \frac{p-1}{p}|\dot{Z}|^p\,d\xi=\int_{\Omega}\frac{1}{q}|v_p|^q\,dx=H.
\]

\item  In the case of translation, $Q_i(Z)=e_i, i=1,2,\ldots, d$ where $e_i$ is the natural basis vector for $\mathbb{R}^d$. This then results in the conservation of total momentum $\int_{\Omega_0}e_i\cdot \frac{\delta L}{\delta \dot{Z}}\,d\xi$. Or, in other words, 
\[
\frac{d}{dt}M_p:=\frac{d}{dt}\int_{\Omega_t} v_p\,dx=0.
\]
\item For rotation, $Q_a(Z)=a\times Z$ for some vector $a$. The action is invariant under rotation and thus we have the conservation of
\[
\int_{\Omega_0}(a\times Z)\cdot \frac{\delta L}{\delta \dot{Z}}\, d\xi
=a\cdot\int_{\Omega_t}x\times v_p\, dx.
\]
By the arbitrariness of $a$, we obtain the conservation of angular momentum
\[
\frac{d}{dt}\int_{\Omega_t}x\times v_p\,dx=0.
\]

\item Here, we investigate the conservation of circulation.
Let $t_0\in (0, 1)$ and $Z(z, t;t_0)$ be the flow map from $t_0$ to $t$ with $z$ being the Lagrangian variable. Picking a  closed curve $s\mapsto \gamma(s)$ at $t_0$, where $s$ is the arclength parameter.
We let the particles on $\gamma$ circuit with distance $\epsilon$ (to be rigorous, we need a $\delta$ tube of $\gamma$ and let the particles in this tube circuit). The flow map then results in an operation $e^{\epsilon Q}$ on $Z$, where  $Q(Z)=(\frac{d}{ds}\gamma)\cdot \nabla_zZ$ for $z\in \gamma(s)$. $e^{\epsilon Q}$ is a symmetry and we have the conservation of:
\[
\int_{\gamma} \left((\frac{d}{ds}\gamma)\cdot \nabla_zZ\right)\cdot\frac{\delta L}{\delta\dot{Z}} \,ds=\int_{\tilde{\gamma}} v_p \cdot \tau\,ds=:\Gamma,
\] 
where $\tilde{\gamma}=Z(\gamma, t; t_0)$ is the material curve while $\tau=\frac{d}{ds}\tilde{\gamma}$ is the unit tangent vector. $\Gamma$ is called the circulation.

\item Note that $\omega_p$ is divergence free and it evolves like a material vector field. Let $Z(z,t; t_0)$ be the flow map from $t_0$ to $t$ and $z$ is the Lagrangian variable at $t_0$. $\omega_p$ at $t_0$ then induces a volume preserving map $z\mapsto e^{\epsilon Q}z$ such that $Q(z)=\omega_p|_{t=t_0}$. Consequently, $Q(Z)=\omega_p|_{t=t_0}\cdot\nabla_zZ(z, t; t_0)$. The action is unchanged under the operation $Z\mapsto e^{\epsilon Q}Z$. We then have the following conserved quantity, called helicity,
\[
\int_{\Omega_{t_0}}Q(Z)\cdot \frac{\delta L}{\delta \dot{Z}}\, dx
=\int_{\Omega_{t_0}}(\omega_p|_{t=t_0}\cdot \nabla_zZ)\cdot |\dot{Z}|^{p-2}\dot{Z}dz=\int_{\Omega_t}\omega_p\cdot v_p\, dx.
\]
Note that $J=(\nabla_zZ)^T$ and the material vector field satisfies $\omega_p(\cdot, t)=J\cdot \omega_p(\cdot, t_0)$.
\end{itemize}  
All these conservation relations can be verified directly.

For compressible $p$-Euler equations with free boundary (note that this system is not closed as there are $2+d$ unknowns but $1+d$ equations only)
\begin{gather}\label{eq:pEulerCom}
\left\{
\begin{split}
&\partial_t\rho+\nabla\cdot(\rho v)=0,\\
&\rho(\partial_t v_p+v\cdot\nabla v_p)=-\nabla \pi,\\
& v_p=|v|^{p-2}v,
\end{split}
\right.
\end{gather}
where the boundary condition is \eqref{eq:freebcs}. 
The conservation of momentum is still true:
\begin{pro}
Suppose the solution to the $p$-Euler equations \eqref{eq:pEulerCom} is smooth. Then, the total $p$-momentum
\begin{gather}
M=\int_{\Omega_t}m \,dx, ~m=\rho v_p,
\end{gather}
is a constant.
\end{pro}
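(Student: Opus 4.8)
The plan is to show that the time derivative of $M = \int_{\Omega_t} \rho v_p\, dx$ vanishes by transporting the integral to the fixed reference domain $\Omega_0$, where the integrand becomes constant in time thanks to the conservation form of the momentum equation and the boundary condition $\pi = 0$ on $\partial\Omega_t$. Throughout I will use the flow map $Z(\xi,t)$ with $\dot Z(\xi,t) = v(Z(\xi,t),t)$, the mass relation $\rho(Z(\xi,t),t)\det(\partial Z/\partial\xi) = \rho_0(\xi)$ coming from the continuity equation, and the observation (used already in Section~\ref{sec:wass}) that $m\,dx = \rho v_p\, dx$ pulls back to $\rho_0(\xi)\, |\dot Z|^{p-2}\dot Z\, d\xi$ on $\Omega_0$.

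First I would write $M_i(t) = \int_{\Omega_t} \rho (v_p)_i\, dx = \int_{\Omega_0} \rho_0(\xi)\, \big(|\dot Z|^{p-2}\dot Z\big)_i\, d\xi$ after the change of variables $x = Z(\xi,t)$, the Jacobian cancelling against $\rho$ via the mass relation. Since $\Omega_0$ is fixed and $\rho_0$ is time-independent, differentiating under the integral sign gives $\frac{d}{dt}M_i = \int_{\Omega_0} \rho_0(\xi)\, \partial_t\big(|\dot Z|^{p-2}\dot Z\big)_i\, d\xi$. The key point is that $\partial_t\big(|\dot Z|^{p-2}\dot Z\big)\big|_\xi$ is exactly the material derivative $(\partial_t + v\cdot\nabla) v_p$ evaluated along the flow, which by the momentum equation in \eqref{eq:pEulerCom} equals $-\frac{1}{\rho}\nabla\pi$ evaluated at $x = Z(\xi,t)$. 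Substituting, $\rho_0(\xi)\,\partial_t(|\dot Z|^{p-2}\dot Z) = \det(\partial Z/\partial\xi)\,(-\nabla\pi)|_{x=Z(\xi,t)}$, so pushing forward again, $\frac{d}{dt}M_i = -\int_{\Omega_t} \partial_{x_i}\pi\, dx$.

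It then remains to show $\int_{\Omega_t} \nabla\pi\, dx = 0$, which follows from the divergence theorem: $\int_{\Omega_t} \partial_{x_i}\pi\, dx = \int_{\partial\Omega_t} \pi\, n_i\, dS = 0$ because $\pi = 0$ on $\partial\Omega_t$ by the free-boundary condition \eqref{eq:freebcs}. This closes the argument and gives $\frac{d}{dt}M = 0$.

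I do not expect a genuine obstacle here under the smoothness hypothesis; the only point requiring a little care is the justification of differentiation under the integral sign and the validity of the change of variables, both of which are immediate once $v$, $\pi$, and the flow map $Z$ are assumed smooth (so that $Z(\cdot,t)$ is a diffeomorphism of $\Omega_0$ onto $\Omega_t$ with $\det(\partial Z/\partial\xi) > 0$). An alternative, equivalent route that avoids the explicit Lagrangian change of variables is to use the Reynolds transport theorem directly on $\int_{\Omega_t}\rho v_p\, dx$, combine the continuity equation with the conservation-form momentum equation $\partial_t(\rho v_p) + \nabla\cdot(\rho v\otimes v_p) = -\nabla\pi$, and note that the convective boundary flux $\int_{\partial\Omega_t}(\rho v_p)(v\cdot n)\, dS$ is cancelled by the transport-theorem boundary term since $\partial\Omega_t$ moves with velocity $v$; what survives is again $-\int_{\partial\Omega_t}\pi n\, dS = 0$. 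I would present the Lagrangian version as the main proof since it mirrors the derivation of the equations in Section~\ref{sec:pEulerIncom}.
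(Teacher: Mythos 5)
Your proposal is correct. The paper's own proof is exactly what you sketch as the ``alternative, equivalent route'': it first combines the continuity equation with the momentum equation to get the conservation form $\partial_t m+\nabla\cdot(m\otimes v+\pi I)=0$ for $m=\rho v_p$, then applies the Reynolds transport theorem, so that the convective flux cancels and only $-\int_{\partial\Omega_t}\pi n\,dS=0$ survives by the free-boundary condition. Your main argument instead pulls $M$ back to the fixed reference domain via the flow map, using $\rho\det(\partial Z/\partial\xi)=\rho_0$ (itself a consequence of the continuity equation) so that the Jacobian cancels, differentiates under the integral sign, substitutes the momentum equation for the material derivative of $v_p$, and pushes forward again to land on the same boundary term. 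The two computations are mathematically equivalent and both hinge on the single nontrivial fact $\int_{\partial\Omega_t}\pi n\,dS=0$; the Eulerian version of the paper is more compact because it does not need the flow-map and mass-relation machinery, while your Lagrangian version has the merit of mirroring the variational derivation of the equations in Section~\ref{sec:pEulerIncom} and of making the cancellation of the convective flux automatic rather than an observation about moving-boundary terms. No gap in either route under the smoothness hypothesis.
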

\begin{proof}
Using the first two equations in \eqref{eq:pEulerCom}, we find that the momentum $m$ satisfies the following conservation law
\begin{gather}
\partial_tm+\nabla\cdot(m\otimes v+\pi I)=0.
\end{gather}

Applying the Reynold's transport theorem,
\[
\frac{d}{dt}\int_{\Omega_t}m \,dx=\int_{\Omega_t}\partial_tm+\nabla\cdot(m_p\otimes v)\,dx
=-\int_{\Omega_t}\nabla\cdot(\pi I)\,dx=-\int_{\partial\Omega_t}\pi n \,dS=0.
\]
Hence, $M$ is a constant.
\end{proof}

\section{Incompressible $p$-Euler equations in a fixed domain}\label{sec:peulerfixed}

In this section, we discuss some general formulation of the initial value problem of $p$-Euler equations in a fixed domain instead of free boundary, i.e. $\Omega_t=\Omega\subset \mathbb{R}^d$ ($d=2,3$) and $\rho=1$ for $x\in\Omega$. The rigorous study of these PDEs is left for future.

\subsection{The formulation}

The initial value problem of $p$-Euler equations ($p>1$) in the fixed domain $\Omega$ for $0<t<1$ and $x\in\Omega$ is given by:
\begin{gather}\label{eq:pEulerRd}
\left\{
\begin{split}
&\displaystyle\partial_t v_p+v\cdot\nabla v_p=-\nabla \pi,\\
&\displaystyle v_p=|v|^{p-2}v,\\
&\displaystyle\nabla\cdot v=0,\\
\end{split}
\right.
\end{gather}
with initial value
\begin{gather}
\displaystyle v(x, 0)=v_0(x).
\end{gather}
In the case $\Omega\neq \mathbb{R}^d$, the boundary condition we impose is the no-flux boundary condition
\begin{gather}\label{eq:noflux}
v\cdot n=0, (v_p\cdot n=0),~x\in\partial\Omega.
\end{gather}
If $\Omega$ is unbounded, we require the quantities to decay at infinity.

Note that in the system \eqref{eq:pEulerRd}, $\nabla\pi$ is there to ensure $\nabla\cdot v=0$. Indeed, using the relation between $v_p$ and $v$, we can formally find that
\[
\partial_t v+v\cdot\nabla v=-|v|^{2-p}\left(I+\frac{2-p}{p-1}\hat{v}\otimes \hat{v}\right)\cdot\nabla\pi
=-|v_p|^{q-2}(I+(q-2)\hat{v}_p\otimes\hat{v}_p)\cdot\nabla\pi.
\]
Then, $\pi$ can be determined by the elliptic equation
\begin{gather}
-\nabla\cdot(|v_p|^{q-2}A_q(v_p)\nabla\pi)=\tr(\nabla v\cdot\nabla v),
\end{gather}
where $A_q(v_p)=I+(q-2)\hat{v}_p\otimes\hat{v}_p$ is positive definite with smallest eigenvalue $q-1>0$. 
This process is like the Leray projection defined based on the Helmholtz decomposition (see \cite{galdi11}) for the usual Euler equations.

It is straightforward to check that the conservations of $p$-Hamiltonian, helicity and circulation still hold
\begin{gather}
\frac{dH}{dt}=0,~\frac{d}{dt}\int_{\Omega_t}\omega_p\cdot v_p \,dx=0,~\frac{d}{dt}\int_{\tilde{\gamma}}v_p\cdot\tau\,ds=0,
\end{gather}
but we do not have conservation of $p$-momentum and angular momentum due to the fixed boundary.

\subsection{2D incompressible $p$-Euler equations in a bounded domain}
Consider the $p$-Euler equations \eqref{eq:pEulerRd} on a fixed domain $\Omega\subset \mathbb{R}^2$ with no-flux boundary condition.

Since $\nabla\cdot v=0$, then we are able to find the stream function $\psi$ such that 
\begin{gather}
v=\nabla^{\perp}\psi=\langle \partial_y\psi, -\partial_x\psi\rangle.
\end{gather}
By the no-flux boundary condition, we are able to choose $\psi|_{\partial\Omega}=0$. $v_p$ is then written as 
\begin{gather}
v_p=|v|^{p-2}v=|\nabla\psi|^{p-2}\nabla^{\perp}\psi ~~ \Rightarrow~~
\omega_p=\nabla\times v_p=\mathrm{div}(v_p^{\perp})=-\nabla\cdot(|\nabla\psi|^{p-2}\nabla\psi),
\end{gather}
where $a^{\perp}=\langle a_2, -a_1\rangle$. In other words, $p$-vorticity is the $p$-Laplacian of the stream function. 

Overall we have the following vorticity-streamfunction formulation
\begin{gather}\label{eq:pEulerRd2}
\left\{
\begin{split}
&\partial_t\omega_p+v\cdot\nabla\omega_p=0,\\
&-\Delta_p\psi=\omega_p,\\
&v=\nabla^{\perp}\psi,
\end{split}
\right.
\end{gather}
with the boundary and initial conditions
\begin{gather}
\begin{split}
&\psi=0,~x\in\partial\Omega,\\
&\omega(x, 0)=\omega_0(x),~x\in\Omega.
\end{split}
\end{gather}

Note that we have the following relations
\begin{gather}
qH=\int_{\Omega}|v_p|^q \,dx=\int_{\Omega} |v|^p \,dx=\int_{\Omega} |\nabla\psi|^p dx=\int_{\Omega} \omega_p\psi \,dx .
\end{gather}

Writing $qH$ in terms of $\omega_p$ and $\psi$ is convenient for the vorticity-streamfunction formulation:
\begin{gather}\label{eq:vorticity}
\frac{dH}{dt}=\int_{\Omega} \psi \partial_t\omega_p\,dx.
\end{gather}
For checking, direct computation reveals
\begin{gather*}
\frac{d (qH)}{dt}=\int_{\Omega} \partial_t\psi\omega_p+\psi \partial_t\omega_p\,dx .
\end{gather*}
The first term $\int_{\Omega} \partial_t\psi\omega_p dx$ equals \[
\int_{\Omega} \partial_t\psi \omega_p \,dx=-\int_{\Omega} \partial_t\psi\nabla\cdot(|\nabla\psi|^{p-2}\nabla\psi) dx
=\frac{1}{p}\int_{\Omega} \partial_t(|\nabla\psi|^p) dx. 
\]
Hence, 
\[
\frac{d(qH)}{dt}-\frac{1}{p}\frac{d(qH)}{dt}=\int_{\Omega}\psi\partial_t\omega_p\, dx,
\]
and \eqref{eq:vorticity} follows.

\begin{rmk}\label{rmk:Eulermodelvar} 
Formulation \eqref{eq:pEulerRd2} inspires the following model:
\begin{gather}
\left\{
\begin{split}
&\partial_t\omega_p+v\cdot\nabla\omega_p=\nu \Delta_p\psi,\\
&-\Delta_p\psi=\omega_p,\\
&v=\nabla^{\perp}\psi.
\end{split}
\right.
\end{gather}
This yields \[
\frac{d H}{dt}=-\nu \int_{\Omega} |\nabla\psi|^p dx=-\nu q H.
\]
The energy function $H$ decays exponentially. This is like porous media model where Darcy's law holds.  Another related model is the vorticity-streamfunction formulation for the $p$-NS equations with $\gamma=2$ in Section \ref{subsec:streampns}.
\end{rmk}

\section{Incompressible $p$-Navier-Stokes equations in a fixed domain}\label{sec:pns}

\subsection{Equations and preliminary investigation}

We now add viscosity term to the $p$-Euler equations to obtain the $p$-Navier-Stokes equations. The diffusion added is represented by the $\gamma$-Laplacian of $v$, and physically it is reminiscent  of the shear thinning or the shear thickening effects for non-Newtonian fluids (\cite{beirao09,los69,leibeson83,breit2017}). The initial value problem of the $p$-Navier-Stokes equations are then given by
\begin{gather}\label{eq:pNS}
\left\{
\begin{split}
&\displaystyle\partial_t v_p+v\cdot\nabla v_p=-\nabla \pi+\nu \Delta_{\gamma} v,\\
&\displaystyle v_p=|v|^{p-2}v,\\
&\displaystyle\nabla\cdot v=0,\\
\end{split}
\right.
\end{gather}
with initial condition
\begin{gather}
v(x, 0)=v_0(x).
\end{gather}
Here $\gamma>1$ and 
\begin{gather}
\Delta_{\gamma}v=\nabla\cdot(|\nabla v|^{\gamma-2}\nabla v), ~|\nabla v|=\sqrt{\sum_{ij}(\partial_i v_j)^2}.
\end{gather}
The $\gamma$-Laplacian viscous term corresponds to fast diffusion  if $1<\gamma<2$ and slow diffusion if $\gamma>2$.

In the case $\Omega\neq \mathbb{R}^d$, we specify the Dirichlet boundary condition 
\begin{gather}\label{eq:dirichlet}
v=0, ~x\in\partial\Omega .
\end{gather}
 Note that we have second derivative in space and we need more boundary conditions compared with the one \eqref{eq:noflux} for $p$-Euler equations. In the case $\Omega$ is unbounded, we require the solutions to decay fast enough at infinity.
 
\begin{rmk}\label{rmk:nonnewtonian}
The name `$p$-Navier-Stokes equations' is reminiscent of the models for non-Newtonian fluids studied by Breit in \cite{breit2015,breit2017}. We call \eqref{eq:pNS} the `$p$-Navier-Stokes equations' due to the Wasserstein-$p$ distance behind.
Compared with our model here, the models in \cite{breit2015,breit2017} are the usual Navier-Stokes equations with the viscous term replaced by $\mathrm{div}(|\varepsilon|^{p-2}\varepsilon)$ where $\varepsilon=\frac{1}{2}(\nabla v+\nabla v^T)$:
\[
\partial_tv+\mathrm{div}(v\otimes v)=-\nabla\pi+\mathrm{div}(|\varepsilon|^{p-2}\varepsilon)+f.
\]
Sometimes, this can also be called `$p$-Navier-Stokes equations', but clearly they mean different things.
\end{rmk}

\subsubsection{The variational structure}

In terms of $H$, the viscous term can be interpretated as 
\begin{gather}
\Delta_{\gamma}v=\mathrm{div}\left(\left|\nabla\frac{\delta H}{\delta v_p}\right|^{\gamma-2}\nabla\frac{\delta H}{\delta v_p}\right) .
\end{gather}
This form is quite similar to the opposite of Wasserstein gradient of a functional though Wasserstein gradient is for functionals of probability measures instead of vector fields. Multiplying $v=\frac{\delta H}{\delta v_p}$ on both sides of the first equation in \eqref{eq:pNS} and integrating, we find 
\begin{gather}
\frac{d}{dt}H=-\nu\int_{\Omega} \left|\nabla\frac{\delta H(v_p)}{\delta v_p}\right|^{\gamma} dx=-\nu\int_{\Omega}|\nabla v|^{\gamma}dx .
\end{gather}

There are two interesting diffusions for the $p$-Navier-Stokes \eqref{eq:pNS}. If $\gamma=2$, we have the usual diffusion and we discuss this case in Section \ref{subsec:streampns}. If $\gamma=p$, it is not hard to find the dual symmetry for the {\it a priori} estimates:
\begin{gather*}
v\in L^{\infty}(0,T; L^p)\cap L^p(0,T; W^{1,p}), ~ v_p\in L^{\infty}(0,T; L^q)\cap L^q(0,T; W^{1,q}),
\end{gather*} 
and the corresponding mollified estimates are listed in Proposition \ref{pro:apriori} below, which are useful for our proof of existence of weak solutions in Section \ref{sec:weaksol}. 

\begin{rmk}
A more physical term for diffusion that rheologists use is $\mathrm{div}(|\varepsilon|^{p-2}\varepsilon)$ where $\varepsilon=\frac{1}{2}(\nabla v+\nabla v^T)$. With this term, we find
\[
\langle \mathrm{div}(|\varepsilon|^{p-2}\varepsilon), v\rangle=-\|\varepsilon\|_p^p.
\]
By inequalities of Korn's type \cite[Lemma 2.1]{beirao09}, one can bound $\|\nabla v\|_p$ by $\|\varepsilon\|_p$ for a general class of functions. Hence, similar energy a priori estimates still hold. We use the dissipating term $\Delta_{\gamma}v=\mathrm{div}(|\nabla v|^{\gamma-2}\nabla v)$ here because of the fact $v=\frac{\delta H}{\delta v_p}$ and the mathematical convenience, while we note that this form captures essentially the same nonlinearity. Equations with more physical term $\mathrm{div}(|\varepsilon|^{p-2}\varepsilon)$ are left for future.
\end{rmk}

\subsubsection{Scaling invariance}\label{subsec:scale}

If we take the scaling for $\Omega=\mathbb{R}^d$ as $x=\lambda \bar{x}$, $ t=\lambda^{\alpha+1}\bar{t}$, then by the physical meaning
\begin{gather}\label{eq:velscale}
v=\frac{dx}{dt}=\frac{1}{\lambda^{\alpha}}v_{\lambda} \Rightarrow v_{\lambda}=\lambda^{\alpha}v(\lambda
\bar{x}, \lambda^{\alpha+1}\bar{t}).
\end{gather}
With the scaling $\pi_{\lambda}=\lambda^{p\alpha}\pi(\lambda \bar{x}, \lambda^{\alpha+1}\bar{t})$, we find 
\begin{gather*}
\partial_{\bar{t}}(v_{\lambda})_p+v_{\lambda}\cdot\nabla_{\bar{x}}(v_{\lambda})_p+\nabla_{\bar{x}} \pi_{\lambda}
=\lambda^{\alpha p+1}(\partial_tv_p+v\cdot\nabla v_p+\nabla \pi) .
\end{gather*}
The viscosity term is scaled as 
\begin{gather*}
\Delta_{\gamma}v_{\lambda}=\lambda^{\alpha\gamma-\alpha+\gamma}\Delta_{\gamma} v .
\end{gather*}
Hence, the equation is scaling-invariant if 
\begin{gather}
\alpha=\frac{\gamma-1}{p+1-\gamma}.
\end{gather}
Naturally, we require
\begin{gather}
\gamma<p+1.
\end{gather}

\begin{rmk}
The scaling for conserved quantity $\rho(x, t)$ is different from that in \eqref{eq:velscale}. Consider $x=\lambda \bar{x}$ and $t=\lambda^{1/\beta}\bar{t}$. To satisfy the conservation of mass $\int\rho(x, t)dx=\int \lambda^{\delta}\rho(\lambda\bar{x}, \lambda^{1/\beta}\bar{t})d\bar{x}$, we find that $\delta=d$. Hence, compared with the velocity (Equation 
\eqref{eq:velscale}), the suitable scaling for $\rho$ is given by, 
\begin{gather}\label{eq:scaleconserv}
\rho_{\lambda}(\bar{x}, \bar{t})\Rightarrow \lambda^d \rho(\lambda\bar{x}, \lambda^{1/\beta}\bar{t}).
\end{gather}
\end{rmk}

\begin{rmk}
For the doubly degenerate diffusion equation $\rho_t=\Delta_p \rho^m$, the scaling for $\rho$ is Equation \eqref{eq:scaleconserv}. The self-similar solution (by choosing $\lambda=\bar{t}^{-\beta}$) is given by
\[
\rho(x, t)=\frac{1}{t^{d\beta}}U\left(\frac{x}{t^{\beta}}\right) .
\]
Inserting this form into the diffusion equation, we find that the critical index $\beta_c$  for the self-similar solution satisfies 
\[
\beta_c=\frac{1}{p+dmp-dm-d}.
\]
Suppose $m>\frac{d-p}{d(p-1)}$ so that $\beta_c>0$. Then, one can find the following fundamental solution with initial data $u(x, 0)=\delta(x)$ which is self-similar, called Barenblatt solution \cite{aac10, barenblatt52}
\begin{gather}
U(\xi)=\begin{cases}
A_1\exp(-\frac{|p-1|^2}{p}|\xi|^{p/(p-1)}),~m=\frac{1}{p-1},\\
[\frac{m(p-1)-1}{mp}]^{(p-1)/(m(p-1)-1)}(R^{p/(p-1)}-|\xi|^{p/(p-1)})_+^{(p-1)/(m(p-1)-1)},~m\neq \frac{1}{p-1},
\end{cases}
\end{gather}
where $a_+=\max(a, 0)$ and $A_1$ and $R$ are determined by the fact that $\int U d\xi=1$. Since $R<\infty$, the fundamental solution in the case $m\neq 1/(p-1)$ has a compact support. If $m=1$, compared with the usual diffusion $p=2$, the diffusion given by a general $p$-Laplacian has finite propagation, and does not provide much smoothing effect.
\end{rmk}

\subsection{Vorticity-streamfunction formulation for 2D incompressible $p$-NS with $\gamma=2$}\label{subsec:streampns}

Consider $\gamma=2$ and $\Omega\subset\mathbb{R}^2$.  We have the vorticity-streamfunction formulation for \eqref{eq:pNS}:
\begin{gather}
\left\{
\begin{split}
&\partial_t\omega_p+v\cdot\nabla\omega_p=-\nu \Delta^2\psi,\\
&-\Delta_p\psi=\omega_p,\\
&v=\nabla^{\perp}\psi.
\end{split}
\right.
\end{gather}
For this formulation, the following energy dissipating relation could be useful for the analysis: 
\begin{multline*}
\frac{dH}{dt}=\frac{1}{q}\frac{d}{dt}\int_{\Omega} |\nabla\psi|^p dx=\frac{1}{q}\frac{d}{dt}\int_{\Omega}\omega_p\psi \,dx=\int_{\Omega}\psi \partial_t\omega_p dx\\
=-\int_{\Omega}\psi v\cdot\nabla\omega_p dx-\nu\int_{\Omega}|\Delta\psi|^2 dx=-\nu \int_{\Omega} |\Delta \psi|^2 dx .
\end{multline*}
Note that $-\int_{\Omega}\psi v\cdot\nabla \omega_p dx
=\int_{\Omega}(\nabla\psi\cdot v)\omega_p dx=0$

\section{Existence of weak solutions for $p$-NS in $\mathbb{R}^d$ with $\gamma=p$}\label{sec:weaksol}

We show in this section the global existence of weak solutions of the $p$-NS equations \eqref{eq:pNS} in $\Omega=\mathbb{R}^d$  with $\gamma=p$  and
\begin{gather}
p\ge d\ge 2.
\end{gather}

Let us now collect some notations for convenience. Suppose $X, Y$ are two Banach spaces. The notation $C^{\infty}(A; Y)$ for $A\subset X$ represents the class of all the infinitely smooth functions $f: A\mapsto Y$, while $C_c^{\infty}(A; Y)$ represents all the smooth functions $f: A\mapsto Y$ but with compact supports. If the codomain $Y$ is clear from the context, we may simply use $C^{\infty}(A)$ or $C_c^{\infty}(A)$ for short. Similar notations are adopted for $L^p$ spaces and Sobolev spaces $W^{1,p}$. If $A=[0, T]\subset \mathbb{R}$, we may write $C^{\infty}(0, T; Y)$ or $L^p(0, T; Y)$ for clarity.

Let $V$ be a Banach space. We use $V'$ to represent the dual space of $V$. Let $f\in V'$ and $u\in V$, the pairing between $f$ and $u$ is denoted as 
\begin{gather}
\langle f, u\rangle :=f(u).
\end{gather}

We need to define the following time distributional derivative with initial data, which is suitable for our definition of weak solutions:
\begin{definition}\label{def:weakderi}
We say $w\in (C_c^{\infty}[0, T))'$ is the time derivative of a function $f\in L_{loc}^1[0, T)$ with a given initial data $f_0$ if \[
\langle w, \varphi \rangle=-\langle  f-f_0, \varphi'\rangle, \forall \varphi\in C_c^{\infty}[0, T).
\]
We denote $\frac{d}{dt}f:=w$. We say $w\in (C_c^{\infty}(\mathbb{R}^d\times [0, T)))'$ is the time derivative of a function $f\in L_{loc}^1(\mathbb{R}^d\times[0, T))$ with a given initial data $f_0(x)\in L_{loc}^1(\mathbb{R}^d)$ if \[
\langle w, \varphi \rangle=-\langle f-f_0, \partial_t\varphi\rangle=-\langle f, \partial_t\varphi \rangle-\int_{\mathbb{R}^d}\varphi(x,0)f_0(x)dx,~~ \forall \varphi\in C_c^{\infty}(\mathbb{R}^d\times [0, T)).
\]
Below, we say $\frac{d}{dt}f\in W$ for a Banach space $W$ if $W$ can be continuously embedded into $(C_c^{\infty}[0,T))'$ and
 we can find $w\in W$ so that $w$ satisfies Definition \ref{def:weakderi}.
\end{definition}

\begin{rmk}
Note that if $f_0$ given is not consistent with the intrinsic initial value of $f$, $\frac{d}{dt}f$ contains some atom at $t=0$, while on the open interval $(0, T)$, $\frac{d}{dt}f$ agrees with the usual distributional derivative. For example, if we set $f(t)=1+t$ for $t>0$ while give $f_0=0$, then $\frac{df}{dt}=\delta(t)+1$.
\end{rmk}

\begin{rmk}
In the second part of the definition, for $\frac{d}{dt}f\in W$, a necessary condition is that every representative of $0\in W$ should be $0$ in $(C_c^{\infty}[0,T))'$.
\end{rmk}

A weak solution of \eqref{eq:pNS}  with initial value $v_0$ is a function such that the equations hold in the distribution sense where 
the time derivative is understood in the sense of Definition \ref{def:weakderi}. In particular, we have:
\begin{definition}\label{defi:weaksol}
We say $v\in L^{\infty}(0,T; L^p(\mathbb{R}^d))\cap L^{p}(0, T; W^{1,p}(\mathbb{R}^d))$ is a weak solution to the $p$-NS equations \eqref{eq:pNS} with initial data $v_0\in L^p(\mathbb{R}^d)$ on $[0, T)$, if it has time regularity in the sense
\begin{gather}\label{eq:timeregularityrequire}
\lim_{h\to 0^+}\int_0^{T-h}\|v(t+h)-v(t)\|_{L^p(\mathbb{R}^d)}^p dt=0,
\end{gather}
and $\forall \varphi \in C_c^{\infty}(\mathbb{R}^d\times [0, T); \mathbb{R}^d)$, $\nabla\cdot\varphi=0$, $\psi\in C_c^{\infty}(\mathbb{R}^d\times [0, T); \mathbb{R})$, we have
\begin{align}\label{eq:weakform}
\begin{split}
\int_0^T\int_{\mathbb{R}^d}v_p\cdot \partial_t\varphi\,dx dt&+\int_0^T\int_{\mathbb{R}^d}\nabla\varphi:vv_p dx \\
&-\nu\int_0^T\int_{\mathbb{R}^d}
\nabla\varphi:\nabla v|\nabla v|^{p-2}dxdt+\int_{\mathbb{R}^d}v_p(x, 0)\cdot \varphi(x, 0)dx=0,\\
\int_0^T\int_{\mathbb{R}^d}\nabla\psi\cdot v \,dxdt=0.
\end{split} 
\end{align}
If $v\in L_{loc}^{\infty}(0,\infty; L^p(\mathbb{R}^d))\cap L_{loc}^{p}(0, \infty; W^{1,p}(\mathbb{R}^d))$, $\forall T>0$, condition  \eqref{eq:timeregularityrequire} holds and the two integrals in \eqref{eq:weakform}  with $T$ replaced by $\infty$ hold for all $ \varphi \in C_c^{\infty}(\mathbb{R}^d\times [0, \infty); \mathbb{R}^d)$, $\nabla\cdot\varphi=0$, $\psi\in C_c^{\infty}(\mathbb{R}^d\times [0, \infty); \mathbb{R})$, we say $v$ is a global weak solution.
\end{definition}

Using the notation in \eqref{eq:tensordot}, we have
\begin{gather}
\nabla\varphi:vv_p=\sum_{ij}\partial_{i}\varphi_j v_i (v_p)_j,~\nabla\varphi:\nabla v=\sum_{ij}\partial_i\varphi_j \partial_i v_j.
\end{gather}

To prove the global existence, we first regularize the $p$-NS equations \eqref{eq:pNS} in Section \ref{subsec:regu} and provide some {\it a priori} estimates. Then, we prove a time shift estimate for the regularized solutions in Section \ref{subsec:compact}. Based on this time shift estimate and the {\it a priori} estimates, we use a variant of Aubin-Lions lemma to conclude  the compactness of the class of regularized solutions in $L^p(0, T; L^p(\Omega))$ for bounded set $\Omega$.  We then use the time regularity of the regularized solutions and the limit function to identify some weak limits in Section \ref{subsec:chi}. Finally, we conclude the global existence of weak solutions in Section \ref{subsec:existence}.

\subsection{Regularization and uniform estimates}\label{subsec:regu}
Pick $\zeta(x)\in C_c^{\infty}(\mathbb{R}^d)$ so that $\zeta\ge 0$ and $\int \zeta dx=1$. Define
\[
J_{\e}=\frac{1}{\e^d}\zeta\left(\frac{x}{\e}\right).
\]
We regularize the $p$-NS equations \eqref{eq:pNS} and initial data to be
\begin{gather}\label{eq:pNSReg}
\left\{
\begin{split}
&\displaystyle\partial_t v_p^\e+(J_{\e}*v^\e)\cdot\nabla v_p^\e=-\nabla \pi^{\e}+\nu \Delta_{p} v^\e+\epsilon\Delta v_p^\e ,\\
&\displaystyle v_p^\e=|v^\e|^{p-2}v^\e,\\
&\displaystyle\nabla\cdot v^\e=0,\\
&\displaystyle v^\e(x, 0)=J_{\e}*v_0(x) .
\end{split}
\right.
\end{gather}

\begin{lmm}\label{lmm:mollify}
If $u\in W^{k,p}(\mathbb{R}^d; \mathbb{R}^d)$ ($k\ge 0$) , then  there exists $C(\|u\|_{L^p},\e)>0$ such that
\begin{gather}\label{eq:mollilying}
\begin{split}
\|J_\e*u\|_{\infty}\le C(\|u\|_{L^p},\e),\\
\|J_\e*u\|_{W^{k,p}}\le \|u\|_{W^{k,p}} .
\end{split}
\end{gather}
If $k\ge 1$ and $\nabla\cdot u=0$, then $\nabla\cdot(J_{\e}*u)=0$.
\end{lmm}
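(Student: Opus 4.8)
The plan is to prove Lemma \ref{lmm:mollify} by standard mollifier estimates, treating the three claims in turn. Throughout, recall $J_\e \ge 0$ with $\int J_\e\,dx = 1$, $J_\e \in C_c^\infty(\mathbb{R}^d)$, and that convolution with $J_\e$ commutes with distributional derivatives.

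\medskip\noindent\textbf{Step 1: the $L^\infty$ bound.} First I would write $(J_\e * u)(x) = \int J_\e(x-y)\,u(y)\,dy$ and apply H\"older's inequality with exponents $q$ and $p$ (where $1/p+1/q=1$):
\[
|(J_\e*u)(x)| \le \|J_\e(x-\cdot)\|_{L^q} \|u\|_{L^p} = \|J_\e\|_{L^q}\,\|u\|_{L^p}.
\]
Since $J_\e(x) = \e^{-d}\zeta(x/\e)$, a change of variables gives $\|J_\e\|_{L^q} = \e^{-d}\e^{d/q}\|\zeta\|_{L^q} = \e^{-d/p}\|\zeta\|_{L^q}$, which is finite because $\zeta\in C_c^\infty$. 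Hence $\|J_\e*u\|_\infty \le C(\e)\|u\|_{L^p}$ with $C(\e) = \e^{-d/p}\|\zeta\|_{L^q}$; this is the desired constant $C(\|u\|_{L^p},\e)$ (note it depends linearly on $\|u\|_{L^p}$). The case $p=1$ is handled trivially by $\|J_\e*u\|_\infty \le \|J_\e\|_\infty\|u\|_{L^1}$.

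\medskip\noindent\textbf{Step 2: the $W^{k,p}$ bound.} For any multi-index $\alpha$ with $|\alpha|\le k$ we have $D^\alpha(J_\e*u) = J_\e*(D^\alpha u)$ in the distributional sense, since $u\in W^{k,p}$. Then Young's convolution inequality gives
\[
\|D^\alpha(J_\e*u)\|_{L^p} = \|J_\e*(D^\alpha u)\|_{L^p} \le \|J_\e\|_{L^1}\,\|D^\alpha u\|_{L^p} = \|D^\alpha u\|_{L^p},
\]
using $\|J_\e\|_{L^1}=1$. Summing over $|\alpha|\le k$ (in whatever equivalent norm convention is in force) yields $\|J_\e*u\|_{W^{k,p}} \le \|u\|_{W^{k,p}}$.

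\medskip\noindent\textbf{Step 3: divergence-free is preserved.} If $k\ge 1$ and $\nabla\cdot u = 0$, then componentwise $\nabla\cdot(J_\e*u) = \sum_i \partial_i(J_\e*u_i) = \sum_i J_\e*(\partial_i u_i) = J_\e*(\nabla\cdot u) = J_\e*0 = 0$, where interchanging the derivative with the convolution is again justified by $u\in W^{1,p}$.

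\medskip I do not anticipate a genuine obstacle here; the only point requiring a modicum of care is tracking the explicit $\e$-dependence in the $L^\infty$ estimate (the factor $\e^{-d/p}$ blows up as $\e\to 0$, which is why the constant cannot be taken uniform in $\e$), and making sure derivatives may be moved onto $u$ under the convolution, which holds precisely because $u$ lies in the Sobolev space $W^{k,p}$ rather than merely in $L^p$.
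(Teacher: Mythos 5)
Your proposal is correct and follows essentially the same route as the paper's own proof: H\"older's inequality with $\|J_\e\|_{L^q}<\infty$ for the $L^\infty$ bound, Young's convolution inequality for the $W^{k,p}$ bound, and commuting $D^\alpha$ with the convolution for the divergence-free claim. The only difference is that you additionally track the explicit constant $\e^{-d/p}\|\zeta\|_{L^q}$, which the paper leaves implicit.
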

\begin{proof}
Firstly, since $J_{\e}$ is a compact supported function, then $\|J_{\e}\|_q<\infty$ and thus 
 \[
\left|\int_{\mathbb{R}^d} J_{\e}(y)u(x-y) dy\right|\le \|u\|_p\|J_{\e}\|_q.
\]

The second inequality of \eqref{eq:mollilying} follows from Young's inequality for convolution.
Lastly, note $D^{\alpha}(J_{\e}*u)=J_{\e}*D^{\alpha}u$ where $D^{\alpha}$ is a derivative with some order. The last claim is then clear. 
\end{proof}

\begin{pro}\label{pro:apriori}
Suppose $v_0\in L^p(\mathbb{R}^d)$ and $p\ge 2$. If System \eqref{eq:pNSReg} has a strong solution on $(0, T)$ for some $T>0$, then 
\begin{gather}\label{eq:pnorm}
\frac{d}{dt}\frac{1}{q}\|v^{\e}(\cdot, t)\|_p^p=-\nu \int_{\mathbb{R}^d}|\nabla v^\e|^pdx-\epsilon\int_{\mathbb{R}^d} \nabla v^\e:\nabla v_p^{\e} dx \le -\nu \int_{\mathbb{R}^d}|\nabla v^\e|^pdx.
\end{gather}
In particular, the estimates for $v^{\e}$ and $v_p^{\e}$ have dual symmetry: there exists $C(p, \nu, T, \|v_0\|_{\mathbb{R}^d})>0$ independent of $\e$ such that
\begin{gather}\label{eq:uniformest}
\begin{split}
& \|v^\e\|_{L^{\infty}(0,T; L^p(\mathbb{R}^d))}=\|v_p^{\e}\|_{L^{\infty}(0,T; L^q(\mathbb{R}^d))} \le \|v_0\|_{L^p(\mathbb{R}^d)},\\
 &\max\left(\|v^{\e}\|_{L^p(0,T; W^{1,p}(\mathbb{R}^d))}, \|v_p^{\e}\|_{L^q(0,T; W^{1,q}(\mathbb{R}^d))}\right)\le C(p, \nu, T, \|v_0\|_{\mathbb{R}^d}).
 \end{split}
\end{gather} 
\end{pro}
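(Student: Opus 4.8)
The plan is to obtain the energy identity \eqref{eq:pnorm} by testing the first equation of the regularized system \eqref{eq:pNSReg} against $v^{\e}$ itself, and then to read off the {\it a priori} bounds \eqref{eq:uniformest} by integrating in time. First I would justify the formal computation: since $v^{\e}$ is assumed to be a strong solution, all the manipulations below are legitimate, and the decay at infinity makes all boundary terms vanish. Pairing $\partial_t v_p^{\e}$ with $v^{\e}$, note that $v_p^{\e}=|v^{\e}|^{p-2}v^{\e}$ is exactly $\delta\mathcal{L}/\delta v$ with $\mathcal{L}(v)=\frac1p|v|^p$, so by the chain rule $\int_{\mathbb{R}^d} \partial_t v_p^{\e}\cdot v^{\e}\,dx = \frac{d}{dt}\int_{\mathbb{R}^d}\frac1p|v^{\e}|^p\,dx$; and since $p L = q H$ with $H=\frac1q\int|v_p^{\e}|^q$, this equals $\frac{d}{dt}\frac1q\|v^{\e}\|_p^p$, which is the left side of \eqref{eq:pnorm}.

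Next I would dispatch the remaining four terms on the right. The transport term $\int (J_{\e}*v^{\e})\cdot\nabla v_p^{\e}\cdot v^{\e}\,dx$ vanishes: writing $v^{\e}\cdot\nabla v_p^{\e} = \frac{p-1}{p}\,(J_\e*v^\e)\cdot\nabla|v^\e|^p$ up to the identity used in \eqref{eq:vectoridentity} (or more directly, integrating by parts and using $\nabla\cdot(J_{\e}*v^{\e})=0$ from Lemma \ref{lmm:mollify}), one is left with an integral of a divergence. The pressure term $\int \nabla\pi^{\e}\cdot v^{\e}\,dx$ vanishes after integration by parts because $\nabla\cdot v^{\e}=0$. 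The viscous term gives $\nu\int \Delta_p v^{\e}\cdot v^{\e}\,dx = -\nu\int |\nabla v^{\e}|^{p-2}\nabla v^{\e}:\nabla v^{\e}\,dx = -\nu\int|\nabla v^{\e}|^p\,dx$ by integration by parts and the definition of $\Delta_p$. The artificial regularizing term gives $\epsilon\int \Delta v_p^{\e}\cdot v^{\e}\,dx = -\epsilon\int \nabla v_p^{\e}:\nabla v^{\e}\,dx$; the key observation is that this is nonnegative under the integral sign with a minus out front, since $\nabla v^{\e}:\nabla v_p^{\e} = \nabla v^\e : \big(|v^\e|^{p-2}(I+(p-2)\hat v^\e\hat v^\e)\cdot\nabla v^\e\big)\ge 0$ because $I+(p-2)\hat v^\e\otimes\hat v^\e$ is positive semidefinite for $p\ge 2$ (smallest eigenvalue $\min(1,p-1)=1>0$ when $p\ge 2$). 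Hence this term is $\le 0$, which yields the stated inequality in \eqref{eq:pnorm}.

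From \eqref{eq:pnorm}, integrating in time on $[0,t]$ gives $\frac1q\|v^{\e}(\cdot,t)\|_p^p + \nu\int_0^t\int_{\mathbb{R}^d}|\nabla v^{\e}|^p\,dx\,ds \le \frac1q\|v^{\e}(\cdot,0)\|_p^p$, and by Lemma \ref{lmm:mollify} the mollified initial data satisfies $\|v^{\e}(\cdot,0)\|_p=\|J_{\e}*v_0\|_p\le\|v_0\|_p$. Taking the supremum in $t$ gives the first line of \eqref{eq:uniformest}, the equality $\|v^{\e}\|_{L^\infty L^p}=\|v_p^{\e}\|_{L^\infty L^q}$ being the pointwise identity $|v_p^{\e}|^q=|v^{\e}|^p$ integrated and rooted. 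For the second line, the dissipation bound controls $\|\nabla v^{\e}\|_{L^p(0,T;L^p)}$, and combined with the $L^\infty(0,T;L^p)$ bound this controls $\|v^{\e}\|_{L^p(0,T;W^{1,p})}$ with a constant depending on $p,\nu,T,\|v_0\|_{L^p}$; the bound on $\|v_p^{\e}\|_{L^q(0,T;W^{1,q})}$ follows because $\nabla v_p^{\e}=|v^{\e}|^{p-2}(I+(p-2)\hat v^\e\hat v^\e)\cdot\nabla v^{\e}$, so $|\nabla v_p^{\e}|\le (p-1)|v^{\e}|^{p-2}|\nabla v^{\e}|$, and Hölder with the conjugate exponents $\frac{p}{p-2}$ and $\frac{p}{2}$ turns $\int|v^{\e}|^{(p-2)q}|\nabla v^{\e}|^q$ into a product of a power of $\|v^{\e}\|_{L^p}$ and a power of $\|\nabla v^{\e}\|_{L^p}$ — both already controlled — yielding the $L^q(0,T;W^{1,q})$ bound. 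I expect the only mildly delicate point to be this last algebraic step tracking the exponents in the bound for $\nabla v_p^{\e}$ (and checking $(p-2)q = p$, which makes Hölder close exactly); everything else is a routine energy estimate.
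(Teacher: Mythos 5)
Your proposal follows the paper's proof in all essentials: test the first equation of \eqref{eq:pNSReg} with $v^\e$, kill the transport and pressure terms by divergence-freeness of $J_\e*v^\e$ and $v^\e$, get $-\nu\|\nabla v^\e\|_p^p$ from the $p$-Laplacian, drop the $\e$-term by positive semidefiniteness of $I+(p-2)\hat v^\e\otimes\hat v^\e$, integrate in time using $\|J_\e*v_0\|_p\le\|v_0\|_p$, and bound $\nabla v_p^\e$ by H\"older. Two bookkeeping slips should be fixed. First, your ``chain rule'' line $\int \partial_t v_p^\e\cdot v^\e\,dx=\frac{d}{dt}\int\frac1p|v^\e|^p\,dx$ has the wrong constant and contradicts the (correct) expression you use next; the clean route, as in the paper, is to write $v^\e=|v_p^\e|^{q-2}v_p^\e$, so the integrand is $\frac1q\partial_t|v_p^\e|^q=\frac1q\partial_t|v^\e|^p$, giving directly the left side of \eqref{eq:pnorm}.

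Second, and this is the step you flagged: $(p-2)q$ is \emph{not} $p$; since $q=p/(p-1)$ one has $(p-2)q=p-q$. Consequently H\"older must be applied with the conjugate pair $\frac{p}{p-q}$ and $\frac{p}{q}$, not $\frac{p}{p-2}$ and $\frac{p}{2}$:
\begin{gather*}
\int_{\mathbb{R}^d}|v^\e|^{(p-2)q}|\nabla v^\e|^q\,dx
\le\Bigl(\int_{\mathbb{R}^d}|v^\e|^{p}\,dx\Bigr)^{\frac{p-q}{p}}\Bigl(\int_{\mathbb{R}^d}|\nabla v^\e|^{p}\,dx\Bigr)^{\frac{q}{p}},
\end{gather*}
which closes exactly because $(p-2)q\cdot\frac{p}{p-q}=p$. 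With your exponents the factors would involve $|v^\e|^{pq}$ and $|\nabla v^\e|^{pq/2}$, neither of which is controlled (they coincide with $L^p$ quantities only when $p=2$, which is anyway the trivial case $q=2$). After this you still need one more H\"older in time, $\int_0^T\|\nabla v^\e\|_p^q\,dt\le T^{\frac{p-q}{p}}\bigl(\int_0^T\|\nabla v^\e\|_p^p\,dt\bigr)^{\frac{q}{p}}$, to obtain the $L^q(0,T;W^{1,q}(\mathbb{R}^d))$ bound in \eqref{eq:uniformest}; this is exactly how the paper concludes. With these corrections your argument coincides with the paper's.
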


\begin{proof}
For the strong solution on $(0, T)$, we dot $v^\e=|v_p^{\e}|^{q-2}v_p^{\e}$ on both sides of the first equation in \eqref{eq:pNSReg} and integrate on $x$. We have the following relations (all integration domains are $\mathbb{R}^d$):
\begin{gather}
\begin{split}
&\int v^\e \cdot \partial_t v_p^\e dx=\int |v_p^\e|^{q-2}v_p^\e\cdot \partial_t v_p^\e dx
=\frac{1}{q}\frac{d}{dt}\int |v_p^\e|^q dx=\frac{d}{dt}\frac{1}{q}\int |v^\e|^p dx,\\
&\int (J_{\e}*v^{\e})\cdot\nabla v_p^\e\cdot v^{\e} dx=\frac{1}{q}\int (J_{\e}*v^{\e})\cdot \nabla (|v_p^\e|^q) dx=0,\\
&\int v^\e\cdot\nabla\pi \,dx=0,\\
&\int v^\e\Delta_pv^\e dx=-\int |\nabla v^\e|^p dx, ~|\nabla v^\e|=\sqrt{\sum_{ij}(\partial_i v^\e_j)^2},\\
& \int v^\e\Delta v_p^\e dx=-\int \nabla v^\e:\nabla v_p^\e dx
=-\int |v^\e|^{p-2}\nabla v^\e: \nabla v^\e\cdot (I+(p-2)\hat{v}^\e\hat{v}^\e) dx.
\end{split}
\end{gather}
Note that $A=(I+(p-2)\hat{v}^\e\hat{v}^\e)$ is a positive definite matrix and \[
\nabla v^\e: \nabla v^\e\cdot (I+(p-2)\hat{v}^\e\hat{v}^\e)
=\sum_{ij} A_{ij}\sum_k \partial_k v_i^\e\partial_k v_j^\e \ge 0.
\]
Therefore, letting $H^{\e}=\frac{1}{q}\|v^\e\|_p^p=\frac{1}{q}\|v_p^{\e}\|_q^q$, we have
\begin{gather*}
\frac{dH^{\e}}{dt}=-\nu \int_{\mathbb{R}^d}|\nabla v^\e|^pdx-\epsilon\int_{\mathbb{R}^d} \nabla v^\e:(I+(p-2)\hat{v}^\e\hat{v}^\e)\cdot\nabla v^\e dx
\le -\nu \int_{\mathbb{R}^d}|\nabla v^\e|^pdx .
\end{gather*}
Consequently, we obtain the first inequality in \eqref{eq:uniformest}:
\[
\|v^{\e}\|_{L^{\infty}(0,T; L^p(\mathbb{R}^d))}^p\le \|v_0^{\e}\|_{L^p(\mathbb{R}^d)}^p\le \|v_0\|_{L^p(\mathbb{R}^d)}^p,
\]
and 
\begin{gather*}
\|v^{\e}\|_{L^p(0,T; W^{1,p}(\mathbb{R}^d))}^p\le C(p,\nu,T)\|v_0\|_{L^p(\mathbb{R}^d)}^p.
\end{gather*} 

Since $p\ge 2$, $1<q\le 2\le p$. If $p=2$, it is clear that $v_p^\e\in L^q(0,T; L^q)$. Assume $q<2<p$. Using H\"older's inequality,
\begin{multline*}
\int_{\mathbb{R}^d} |\nabla v_p^\e|^q dx=\int_{\mathbb{R}^d} ||v^\e|^{p-2}(I+(p-2)\hat{v}^\e\hat{v}^\e)\cdot\nabla v^\e|^q dx \\
\le C(p)\int_{\mathbb{R}^d} |v^\e|^{(p-2)q}|\nabla v^\e|^q dx 
\le  C(p, \|v\|_{L^{\infty}(L^p)})\left(\int_{\mathbb{R}^d}|\nabla v^\e|^p dx\right)^{q/p}.
\end{multline*}
Hence,
\begin{gather*}
\int_0^T\int_{\mathbb{R}^d} |\nabla v_p^\e|^q dx dt
\le C(p,  \|v\|_{L^{\infty}(L^p)})T^{(p-q)/p}\left(\int_0^T\|\nabla v^\e\|_p^p dt\right)^{q/p}.
\end{gather*}
The claim is then proved.
\end{proof}

\begin{cor}
The regularized system \eqref{eq:pNSReg} has a global strong solution on $[0,\infty)$.
\end{cor}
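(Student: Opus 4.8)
The plan is to build a local-in-time solution by an approximation scheme and then invoke the \emph{a priori} bounds of Proposition \ref{pro:apriori}: the estimate $\|v^{\epsilon}(\cdot,t)\|_{L^p}\le\|v_0\|_{L^p}$ (together with the dual bound for $v_p^{\epsilon}$) holds on any interval of existence and is independent of its length, so once a local solution is available it continues to all of $[0,\infty)$. Thus the real content is the existence of a local (hence, by continuation, global) solution of \eqref{eq:pNSReg} for fixed $\epsilon>0$. The crucial structural simplification, already isolated in Lemma \ref{lmm:mollify}, is that an $L^{\infty}(0,T;L^p)$ bound on $v^{\epsilon}$ makes the convective field $J_{\epsilon}*v^{\epsilon}$ a divergence-free drift which, together with all its spatial derivatives, is bounded uniformly in $t$ by a constant depending only on $\epsilon$ and $\|v_0\|_{L^p}$.

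For the construction I would use a Galerkin scheme: fix a basis $\{e_k\}$ of smooth, compactly supported, divergence-free vector fields whose finite-dimensional projections are bounded on $W^{1,p}$, seek $v_n^{\epsilon}=\sum_{k\le n}c_k(t)e_k$, and project the momentum equation of \eqref{eq:pNSReg} onto $\mathrm{span}\{e_1,\dots,e_n\}$, so that the multiplier $\nabla\pi^{\epsilon}$ drops out. The delicate point is the degenerate ``mass matrix'' $M_{jk}(c)=\int_{\mathbb{R}^d}|v_n^{\epsilon}|^{p-2}\bigl(I+(p-2)\hat{v}_n^{\epsilon}\otimes\hat{v}_n^{\epsilon}\bigr)e_k\cdot e_j\,dx$ produced by $\partial_t v_p^{\epsilon}=|v^{\epsilon}|^{p-2}(I+(p-2)\hat{v}^{\epsilon}\hat{v}^{\epsilon})\,\partial_t v^{\epsilon}$: it is only positive semidefinite, and for $p>2$ it degenerates where $v_n^{\epsilon}=0$. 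To obtain a genuine ODE I would first replace $|v|^{p-2}v$ by the nondegenerate $(\delta+|v|^2)^{(p-2)/2}v$ (and $|\nabla v|^{p-2}\nabla v$ and $\epsilon\Delta v_p$ accordingly), for which $M(c)$ is positive definite; the resulting finite-dimensional ODE is globally solvable because the energy computation in the proof of Proposition \ref{pro:apriori} — which uses only positive semidefiniteness of $A(v^{\epsilon})$, $\nabla\cdot(J_{\epsilon}*v^{\epsilon})=0$, and vanishing of the pressure contribution — survives at the Galerkin and at the $\delta$-regularized level.

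Passing to the limit rests on four ingredients. First, the uniform bounds $v_n^{\epsilon}\in L^{\infty}(0,T;L^p)\cap L^p(0,T;W^{1,p})$ and $v_{p,n}^{\epsilon}\in L^{\infty}(0,T;L^q)\cap L^q(0,T;W^{1,q})$ exactly as in \eqref{eq:uniformest}. Second, a bound on $\partial_t v_{p,n}^{\epsilon}$ in $L^q(0,T;W^{-1,q})$ (with an $\epsilon$-dependent constant, uniform in $n$ and $\delta$), obtained by testing the Galerkin equation against $w\in\mathrm{span}\{e_k\}$ with $\|w\|_{W^{1,p}}\le1$ and using $q'=p$, $|\nabla v_n^{\epsilon}|^{p-2}\nabla v_n^{\epsilon}\in L^q(0,T;L^q)$, $\epsilon\nabla v_{p,n}^{\epsilon}\in L^q(0,T;L^q)$, and $\|J_{\epsilon}*v_n^{\epsilon}\|_{\infty}\le C(\epsilon)$. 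Third, an Aubin--Lions--Simon compactness argument in the $L^q_{\mathrm{loc}}$ variant (exhausting $\mathbb{R}^d$ by balls; cf.\ \cite{cl12}) yielding $v_{p,n}^{\epsilon}\to v_p^{\epsilon}$ strongly in $L^q(0,T;L^q_{\mathrm{loc}})$, hence $v_n^{\epsilon}\to v^{\epsilon}$ a.e., so that $|v_n^{\epsilon}|^{p-2}v_n^{\epsilon}\to|v^{\epsilon}|^{p-2}v^{\epsilon}$ follows from continuity and Vitali and $(J_{\epsilon}*v_n^{\epsilon})\cdot\nabla v_{p,n}^{\epsilon}$ passes to the limit as a strong-$L^p$ times weak-$L^q$ product. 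Fourth, Minty's monotonicity trick for the genuinely nonlinear flux $|\nabla v_n^{\epsilon}|^{p-2}\nabla v_n^{\epsilon}$, using monotonicity of $v\mapsto-\Delta_p v$ and the energy identity to control $\limsup_n\int|\nabla v_n^{\epsilon}|^p$. This gives a weak solution of \eqref{eq:pNSReg} on every $[0,T]$; since $\|v^{\epsilon}(t)\|_{L^p}\le\|v_0\|_{L^p}$ for all $t$ there is no finite-time blow-up, so the solution lives on $[0,\infty)$, and because the drift is smooth and the operator $\epsilon\Delta(\cdot)$ is nondegenerate in $v_p^{\epsilon}$ a parabolic bootstrap upgrades it to a strong solution.

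The main obstacle is the doubly nonlinear character of the system: the time derivative acts on $v_p^{\epsilon}=|v^{\epsilon}|^{p-2}v^{\epsilon}$, which for $p>2$ degenerates where $v^{\epsilon}$ vanishes, while two genuinely second-order operators — the $p$-Laplacian $\nu\Delta_p v^{\epsilon}$, nonlinear but monotone in $v^{\epsilon}$, and the regularizing $\epsilon\Delta v_p^{\epsilon}$ — compete at top order, and everything must be reconciled with the constraint $\nabla\cdot v^{\epsilon}=0$ and its multiplier $\nabla\pi^{\epsilon}$. Choosing function spaces and an approximation scheme (in particular, the auxiliary $\delta$-desingularization of the time-derivative term, removed only at the end) that are simultaneously compatible with all three features, and making the compactness and Minty steps work on the unbounded domain $\mathbb{R}^d$, is where the work lies; the rest is a direct consequence of the estimates already established.
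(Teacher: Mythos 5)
Your proposal is correct in outline and, for the part of the argument the paper actually writes out, it uses the same mechanism: the global continuation rests on the a priori bound $\|v^{\epsilon}(\cdot,t)\|_{L^p}\le\|v_0\|_{L^p}$ from Proposition \ref{pro:apriori} combined with Lemma \ref{lmm:mollify}, which turns the convective field $J_{\epsilon}*v^{\epsilon}$ into a divergence-free drift bounded, together with its derivatives, by a constant depending only on $\epsilon$ and $\|v_0\|_{L^p}$, so the maximal existence time cannot be finite. Where you differ is in emphasis: the paper simply declares local existence of a strong solution ``standard'' and devotes its entire proof to this continuation step, whereas you spend most of your effort constructing solutions from scratch --- a Galerkin scheme with a $\delta$-desingularization of the degenerate mass matrix coming from $\partial_t(|v|^{p-2}v)$, the energy identity at the approximate level, Aubin--Lions compactness on an exhaustion of $\mathbb{R}^d$, and Minty's monotonicity trick for the $p$-Laplacian flux. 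This buys an existence argument that does not lean on quoted local theory, but note what it delivers directly is a weak solution on each $[0,T]$; your closing assertion that ``a parabolic bootstrap upgrades it to a strong solution'' is precisely the nontrivial point, since in the variable $u=v_p^{\epsilon}$ the term $\epsilon\Delta u$ is nondegenerate but $\nu\Delta_p v^{\epsilon}=\nu\,\mathrm{div}(|\nabla(|u|^{q-2}u)|^{p-2}\nabla(|u|^{q-2}u))$ remains a degenerate quasilinear second-order operator and the pressure must still be recovered on $\mathbb{R}^d$. The paper offers no more detail on this regularity step than you do, so your argument is at the same level of rigor as its ``standard'' claim; just be aware that the strong-regularity upgrade, not the Galerkin machinery, is the genuinely unproved step in both versions.
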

\begin{proof}
The local existence of strong solution is standard. Let $T>0$ be the largest time for the existence of strong solution of \eqref{eq:pNSReg}.  By Proposition \ref{pro:apriori}, on $(0, T)$, $\|v^{\e}(\cdot, t)\|_{L^p}\le \|v_0\|_{L^p(\mathbb{R}^d)}$. Hence, by Lemma \ref{lmm:mollify}, there exists $C(\epsilon, \|v_0\|_{L^p(\mathbb{R}^d)})$ such that
 \[
\|J_{\e}*v^{\e}\|_{\infty}\le C(\epsilon, \|v_0\|_{L^p(\mathbb{R}^d)}), \forall t\in(0, T).
\]
This implies that $T=\infty$.
\end{proof}

\subsection{The compactness of $v^\e$}\label{subsec:compact}

The following lemma is from \cite{damascelli98,cl16}. We copy down the proof here for convenience.
\begin{lmm}\label{lmm:basicineq}
Let $p>1$, then there exists $C(p)>0$ so that $\forall \eta_1, \eta_2 \in\mathbb{R}^d$, then \[
(|\eta_1|^{p-2}\eta_1-|\eta_2|^{p-2}\eta_2)\cdot(\eta_1-\eta_2)\ge C(p)(|\eta_1|+|\eta_2|)^{p-2}|\eta_1-\eta_2|^2.
\]
\end{lmm}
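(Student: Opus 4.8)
The plan is to prove the elementary vector inequality
\[
(|\eta_1|^{p-2}\eta_1-|\eta_2|^{p-2}\eta_2)\cdot(\eta_1-\eta_2)\ge C(p)(|\eta_1|+|\eta_2|)^{p-2}|\eta_1-\eta_2|^2
\]
by reducing it to a one-variable estimate along the segment joining $\eta_2$ to $\eta_1$. First I would handle the trivial case $\eta_1=\eta_2$ (both sides vanish) and, by symmetry and scaling, reduce to the case $|\eta_2|\le|\eta_1|$ with, say, $|\eta_1|=1$ if one wishes, though scaling is not strictly needed. The core idea is to write $F(\eta)=|\eta|^{p-2}\eta$, which is the gradient of $\eta\mapsto \frac{1}{p}|\eta|^p$, and express
\[
(F(\eta_1)-F(\eta_2))\cdot(\eta_1-\eta_2)=\int_0^1 \frac{d}{ds}F\big(\eta_2+s(\eta_1-\eta_2)\big)\cdot(\eta_1-\eta_2)\,ds=\int_0^1 (\eta_1-\eta_2)^T DF(\eta(s))(\eta_1-\eta_2)\,ds,
\]
where $\eta(s)=\eta_2+s(\eta_1-\eta_2)$ and $DF(\eta)=|\eta|^{p-2}\big(I+(p-2)\hat\eta\otimes\hat\eta\big)$, which is exactly the matrix $A$ appearing already in the paper (positive definite with smallest eigenvalue $(p-1)|\eta|^{p-2}$ when $p>1$). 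Hence the integrand is bounded below by $\min(1,p-1)\,|\eta(s)|^{p-2}|\eta_1-\eta_2|^2$, and the whole problem collapses to the scalar claim
\[
\int_0^1 |\eta(s)|^{p-2}\,ds\ge c(p)\,(|\eta_1|+|\eta_2|)^{p-2}.
\]

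To establish this scalar lower bound I would treat the cases $p\ge 2$ and $1<p<2$ separately, since the map $r\mapsto r^{p-2}$ is increasing in the first case and decreasing in the second. Along the segment one has the elementary bounds $\big||\eta_1|-|\eta_2|\big|\le |\eta(s)|\le |\eta_1|+|\eta_2|$ and, more usefully, $|\eta(s)|\ge \max(s|\eta_1|-(1-s)|\eta_2|,\dots)$ type estimates; a cleaner route is to note $|\eta(s)|\ge |s\eta_1+(1-s)\eta_2|$ is already $\eta(s)$ itself, so instead I use that for $s$ in a fixed subinterval the length $|\eta(s)|$ is comparable to $\max(|\eta_1|,|\eta_2|)$, which in turn is comparable to $|\eta_1|+|\eta_2|$ up to a factor $2$. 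Concretely: set $M=\max(|\eta_1|,|\eta_2|)$, assume WLOG $|\eta_1|=M$. For $p\ge 2$, since $|\eta(s)|\ge s|\eta_1|-(1-s)|\eta_2|\ge sM-(1-s)M=(2s-1)M$ is too weak, I instead observe $|\eta(s)|\ge |\eta_1|-(1-s)|\eta_1-\eta_2|\ge M - 2(1-s)M \ge M/2$ for $s\in[3/4,1]$, hence $\int_0^1|\eta(s)|^{p-2}ds\ge \int_{3/4}^1 (M/2)^{p-2}ds=\tfrac14(M/2)^{p-2}$, and $M\ge\tfrac12(|\eta_1|+|\eta_2|)$ finishes it. For $1<p<2$ the function $r\mapsto r^{p-2}$ is decreasing, so I bound $|\eta(s)|\le |\eta_1|+|\eta_2|\le 2M$ pointwise, giving $|\eta(s)|^{p-2}\ge (2M)^{p-2}$ for all $s$, and integrate directly. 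Combining the two regimes yields $C(p)=\min(1,p-1)\cdot c(p)$ with an explicit $c(p)$.

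The one genuinely delicate point — and the step I expect to be the main obstacle — is the degenerate behavior when one of the vectors vanishes or when $\eta(s)$ passes near the origin along the segment (relevant only when $p<2$, where $DF$ blows up, and when $p>2$, where the lower eigenvalue degenerates). In the $p<2$ case the integrand is actually \emph{large} near the origin, so no problem arises there; the real care is needed to make sure the chain-rule computation $\frac{d}{ds}F(\eta(s))$ is justified even when $\eta(s_0)=0$ for some $s_0$, i.e. that $F$ is $C^1$ away from $0$ and the single bad point contributes nothing to the integral (it is a set of measure zero and the integrand is locally bounded near it when $p\ge 2$, or integrable when $1<p<2$ since $p-2>-1$). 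I would dispatch this by noting $F$ is locally Lipschitz on $\mathbb{R}^d\setminus\{0\}$ and continuous everywhere, so the fundamental theorem of calculus applies on each subinterval avoiding $\eta^{-1}(0)$, which is at most one point; alternatively one can perturb $\eta_2$ slightly off the line through $0$ and pass to the limit. With that technicality handled, the rest is the routine case analysis sketched above.
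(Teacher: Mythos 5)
Your argument is correct and follows essentially the same route as the paper: both write the left-hand side as $\int_0^1(\eta_1-\eta_2)\cdot\nabla A(\xi(t))\cdot(\eta_1-\eta_2)\,dt$ with $\nabla A(\xi)=|\xi|^{p-2}\bigl(I+(p-2)\hat\xi\otimes\hat\xi\bigr)$, bound this matrix below by $\min(1,p-1)|\xi|^{p-2}$, and dispose of $1<p<2$ by monotonicity of $r\mapsto r^{p-2}$ together with $|\xi|\le|\eta_1|+|\eta_2|$. The only divergence is in the $p\ge 2$ scalar estimate, where your uniform bound $|\eta(s)|\ge M/2$ on $s\in[3/4,1]$ (with $M=\max(|\eta_1|,|\eta_2|)$ the norm of the endpoint at $s=1$) replaces, and mildly streamlines, the paper's two subcases $|\eta_2|\ge 2|\eta_1-\eta_2|$ versus $|\eta_2|<2|\eta_1-\eta_2|$ with the explicit zero $t_0=|\eta_2|/|\eta_1-\eta_2|$; your additional care about applying the fundamental theorem of calculus when the segment passes through the origin (integrable singularity since $p-2>-1$) is sound and is a point the paper passes over silently.
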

\begin{proof}
Let $A(\eta)=|\eta|^{p-2}\eta$. Fix $\eta_1\neq \eta_2$ (when they are equal, it is trivial). Without loss of generality, we assume $|\eta_2|\ge |\eta_1|$.

Denote $\xi(t)=(1-t)\eta_2+t\eta_1$.  Then, we have
\begin{multline*}
(A(\eta_1)-A(\eta_2))\cdot(\eta_1-\eta_2)=\int_0^1 (\eta_1-\eta_2)\cdot \nabla A(\xi(t))\cdot(\eta_1-\eta_2) dt \\
=\int_0^1  (\eta_1-\eta_2)\cdot |\xi|^{p-2}(I+(p-2)\hat{\xi}\hat{\xi})\cdot (\eta_1-\eta_2) dt
\ge \min(p-1, 1)|\eta_1-\eta_2|^2\int_0^1|\xi|^{p-2}dt.
\end{multline*}
If $p\in (1,2)$, $|\xi|\le |\eta_1|+|\eta_2|$ and the inequality is trivial. 

If $p\ge 2$, when $|\eta_2|\ge 2|\eta_1-\eta_2|$, then \[
|\xi|\ge ||\eta_2|-t|\eta_1-\eta_2||\ge \frac{|\eta_2|}{2}\ge \frac{|\eta_2|+|\eta_1|}{4}.
\]
When $|\eta_2|< 2|\eta_1-\eta_2|$, letting $t_0=|\eta_2|/|\eta_1-\eta_2|$, we have
\begin{multline*}
\int_0^1|\xi|^{p-2} dt\ge \int_0^1 ||\eta_2|-t|\eta_1-\eta_2||^{p-2}dt= \int_0^1 |\eta_1-\eta_2|^{p-2}|t-t_0|^{p-2}dt\\
\ge \frac{|\eta_2|^{p-2}}{2^{p-2}}\int_0^{1/2}z^{p-2}dz\ge C_2(p)(|\eta_1|+|\eta_2|)^{p-2}.
\end{multline*}
\end{proof}

Denote the time shift operator:
\begin{gather}
\tau_h v^\e(x, t)=v^\e(x, t+h).
\end{gather} 
Now we prove a crucial time shift estimate for $v^{\e}$:
\begin{lmm}\label{lmm:timeestimate}
Suppose $p\ge d\ge 2$. $\|\tau_hv^{\e}-v^\e\|_{L^p(0,T-h; L^p(\mathbb{R}^d))}\to 0$ uniformly in $\e<1$ as $h\to 0+$.
\end{lmm}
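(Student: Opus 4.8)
The plan is to transfer the time-shift estimate from $v^\e$ to the $p$-momentum $v_p^\e$ via the monotonicity inequality of Lemma \ref{lmm:basicineq}, and then to control the resulting bilinear quantity using the equation \eqref{eq:pNSReg} together with the $\e$-uniform bounds of Proposition \ref{pro:apriori}. Since $p\ge 2$ and $|\eta_1-\eta_2|\le|\eta_1|+|\eta_2|$, Lemma \ref{lmm:basicineq} gives the pointwise bound $|v^\e(t+h)-v^\e(t)|^p\le C(p)\,\big(v_p^\e(t+h)-v_p^\e(t)\big)\cdot\big(v^\e(t+h)-v^\e(t)\big)$, so after integrating in $x$ and $t\in(0,T-h)$ it suffices to show that
\[
I_h^\e:=\int_0^{T-h}\!\int_{\mathbb{R}^d}\big(v_p^\e(t+h)-v_p^\e(t)\big)\cdot\big(v^\e(t+h)-v^\e(t)\big)\,dx\,dt\longrightarrow 0
\]
uniformly in $\e<1$ as $h\to0+$. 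Setting $w_t:=v^\e(\cdot,t+h)-v^\e(\cdot,t)$, which is divergence-free, and writing $v_p^\e(t+h)-v_p^\e(t)=\int_t^{t+h}\partial_sv_p^\e(s)\,ds$, I pair the equation for $\partial_sv_p^\e$ in \eqref{eq:pNSReg} with $w_t$ and integrate in $x$; the pressure term vanishes because $\nabla\cdot w_t=0$, and integrating the convection term by parts in $x$ (using $\nabla\cdot(J_\e*v^\e)=0$) yields, after exchanging the $s$- and $t$-integrals,
\begin{multline*}
I_h^\e=-\int_0^T\!\int_{I(s,h)}\!\int_{\mathbb{R}^d}\Big(\nu|\nabla v^\e(s)|^{p-2}\nabla v^\e(s):\nabla w_t+\e\,\nabla v_p^\e(s):\nabla w_t\\
-\,v_p^\e(s)\cdot\big((J_\e*v^\e)(s)\cdot\nabla w_t\big)\Big)\,dx\,dt\,ds,
\end{multline*}
where $I(s,h):=(s-h,s)\cap(0,T-h)$ has length at most $h$.

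I then estimate the three terms separately, in each case pulling out an $x$-integral and applying H\"older in $t$; the common mechanism is that $\int_{I(s,h)}\|\nabla w_t\|_{L^p}\,dt\le h^{1/q}\big(\int_{I(s,h)}\|\nabla w_t\|_{L^p}^p\,dt\big)^{1/p}\le Ch^{1/q}$ uniformly in $\e$, since $\|\nabla w_\cdot\|_{L^p(0,T-h;L^p)}\le 2\|\nabla v^\e\|_{L^p(0,T;L^p)}$ is bounded by Proposition \ref{pro:apriori}. For the viscous term, H\"older in $x$ with exponents $p/(p-1)$ and $p$ gives the $x$-integral bound $\|\nabla v^\e(s)\|_{L^p}^{p-1}\|\nabla w_t\|_{L^p}$, and $\int_0^T\|\nabla v^\e(s)\|_{L^p}^{p-1}\,ds\le C$, so this term is $O(h^{1/q})$. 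The $\e$-term is handled identically, pairing $\nabla v_p^\e(s)\in L^q$ against $\nabla w_t\in L^p$ and using the uniform bound on $\|\nabla v_p^\e\|_{L^q(0,T;L^q)}$; the prefactor $\e<1$ makes it $O(h)$. For the convection term, since $|v_p^\e|=|v^\e|^{p-1}$, generalized H\"older together with a Sobolev embedding of $W^{1,p}(\mathbb{R}^d)$ into a high Lebesgue space — this is the step that uses $p\ge d$, giving $L^\infty$ when $p>d$ and any finite $L^r$ when $p=d$ — and the bound $\|J_\e*u\|_{W^{1,p}}\le\|u\|_{W^{1,p}}$ from Lemma \ref{lmm:mollify} together with $\|v^\e\|_{L^\infty(0,T;L^p)}\le\|v_0\|_{L^p}$, bound the $x$-integral by $C\,\|v^\e(s)\|_{W^{1,p}}^{p}\,\|\nabla w_t\|_{L^p}$ (or a smaller power of $\|v^\e(s)\|_{W^{1,p}}$); since $\|v^\e\|_{L^p(0,T;W^{1,p})}^p\le C$, the H\"older-in-$t$ estimate again yields $O(h^{1/q})$.

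Summing the three contributions gives $I_h^\e\le C h^{1/q}$ for $h<1$ with $C$ independent of $\e$, hence $\|\tau_hv^\e-v^\e\|_{L^p(0,T-h;L^p(\mathbb{R}^d))}^p\le C(p)^{-1}I_h^\e\to0$ uniformly in $\e<1$, which is the claim. I expect the convection term to be the only genuine difficulty: it is the sole place where $\e$-uniformity is subtle, because one must avoid the $\e$-dependent estimate $\|J_\e*u\|_\infty\le C(\|u\|_{L^p},\e)$ of Lemma \ref{lmm:mollify} and instead route through a Sobolev embedding, and it is precisely this routing that forces the hypothesis $p\ge d$; the remaining terms are routine H\"older bookkeeping built on Proposition \ref{pro:apriori}.
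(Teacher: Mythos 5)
Your proposal is correct and shares the skeleton of the paper's proof: both reduce the $L^p$ increment to the duality pairing $\int(\tau_hv_p^\e-v_p^\e)\cdot(\tau_hv^\e-v^\e)\,dx$ via Lemma \ref{lmm:basicineq} (using $p\ge 2$), both express the increment of $v_p^\e$ by integrating the regularized equation \eqref{eq:pNSReg} from $t$ to $t+h$, the pressure drops out because the test increment is divergence free, and everything closes with the $\e$-uniform bounds of Proposition \ref{pro:apriori}. The genuine difference is the convection term. The paper does not integrate by parts: it pairs $\tau_hv^\e(t)$ (resp. $v^\e(t)$) against $v^\e(s)\cdot\nabla v_p^\e(s)$ with Young's inequality at exponents $(2p,2p,q)$, keeping $\nabla v_p^\e$ only in the $L^q_tL^q_x$ norm already provided by Proposition \ref{pro:apriori}, and then uses Gagliardo--Nirenberg $\|v\|_{2p}^{2p}\le C\|\nabla v\|_p^{d}\|v\|_p^{2p-d}$, so that $p\ge d$ enters as time-integrability of $\|\nabla v^\e\|_p^{d}$; you instead integrate by parts onto $\nabla w_t$ and route through the Sobolev embedding of $W^{1,p}(\mathbb{R}^d)$, so $p\ge d$ enters through the embedding. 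Both are legitimate; the paper's route gives the slightly cleaner rate $O(h)$ versus your $O(h^{1/q})$ (either suffices) and works verbatim at the endpoint $p=d$, whereas in your route the split $\|v^\e\|_p^{p-1}\|J_\e*v^\e\|_\infty\|\nabla w_t\|_p$ is only available for $p>d$; at $p=d$ you must redistribute integrability, e.g. bound by $\||v^\e|^{p-1}\|_{a}\|J_\e*v^\e\|_{b}\|\nabla w_t\|_p$ with $1/a+1/b=1/q$, $a(p-1)>p$, $b\ge p$, together with Gagliardo--Nirenberg, which keeps the total power of $\|v^\e(s)\|_{W^{1,p}}$ at most $p$ and hence time-integrable --- you flag this case, but the exponents should be written out. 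One harmless slip: the $\e$-regularization term comes out as $O(\e h^{1/q})$, not $O(h)$, which of course still vanishes uniformly as $h\to 0+$.
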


\begin{proof}
We have for any $t\le T-h$,
\begin{gather}\label{eq:vpshift}
\tau_hv_p^\e-v_p^\e+\int_t^{t+h}v^{\e}\cdot\nabla v_p^{\e} ds=\int_t^{t+h}-\nabla \pi^{\e} \,ds+\nu\int_t^{t+h}\Delta_p v^{\e} \,ds
+\e\int_t^{t+h}\Delta v^\e_p ds.
\end{gather}

We now dot both sides with $\tau_hv(t)^\e-v^\e(t)$ and integrate on $x$. Below, we show how each terms are estimated.

 For $\tau_hv_p^\e-v_p^\e$ in \eqref{eq:vpshift}, we have by Lemma \ref{lmm:basicineq},
\begin{multline*}
\int (\tau_hv^\e-v^\e)\cdot (\tau_hv_p^\e-v_p^\e) dx \ge
C(p)\int (|\tau_h v^\e|+|v^\e|)^{p-2}|\tau_h v^\e-v^\e(t)|^2 dx \ge C(p)\|\tau_h v^\e-v^\e\|_p^p.
\end{multline*}

Since $v^\e$ is divergence free, the integral for $(\tau_hv^{\e}(\cdot, t)-v^{\e}(\cdot, t))\cdot \nabla\pi (\cdot, s)$ is zero. 

 For the other terms, we show how to estimate $\tau_h v^\e(t)$ term. The estimates for $v^\e(t)$ are similar.

Consider the $p$-Laplacian term. By Young's inequality:
\begin{multline}
\int_t^{t+h}\int_{\mathbb{R}^d} \tau_h v^\e\cdot \Delta_p v^\e dx ds
=-\int_t^{t+h} \int_{\mathbb{R}^d}(\nabla \tau_hv^\e:\nabla v^\e)|\nabla v^\e|^{p-2}dx ds\\
\le \int_t^{t+h} \int_{\mathbb{R}^d} \frac{1}{2}(|\nabla \tau_hv^\e|^2|\nabla v^\e|^{p-2}+|\nabla v^\e|^p) dx ds
\le \int_t^{t+h}\left(\frac{1}{p}\|\nabla \tau_hv^\e(t)\|_p^p+(1-\frac{1}{p})\|\nabla v^{\e}(s)\|_p^p\right)ds\\
=\frac{h}{p}\|\nabla \tau_hv^\e(t)\|_p^p+(1-\frac{1}{p})\int_t^{t+h}\|\nabla v^{\e}(s)\|_p^pds .
\end{multline}
The regularization diffusion term is similar,
\begin{multline}
\int_t^{t+h}\int_{\mathbb{R}^d}\tau_hv^\e(t)\cdot\Delta v_p^\e dxds=-\int_t^{t+h}\int_{\mathbb{R}^d} \nabla\tau_hv^\e(t):\nabla v_p^\e dxds \\
\le \int_t^{t+h}\frac{1}{p}\|\nabla \tau_hv^\e(t)\|_p^p+\frac{1}{q}\|\nabla v_p^\e(s)\|_q^q ds
=\frac{h}{p}\|\nabla\tau_h v^\e(t)\|_p^p
+\frac{1}{q}\int_t^{t+h}\|\nabla v_p^\e(s)\|_q^q ds .
\end{multline}

Consider the transport term. Applying Gagliardo-Nirenberg inequality, we have
\[
\|v^{\e}\|_{2p}^{2p}\le C\|\nabla v^{\e}\|_p^{d}\|v^{\e}\|_p^{2p-d}\le C(\|v^{\e}\|_{L^{\infty}(0, T;L^p(\mathbb{R}^d))})\|\nabla v^{\e}\|_p^d.
\]
Therefore, 
\begin{multline}
-\int_t^{t+h}\int_{\mathbb{R}^d} \tau_h v^\e(t)v^\e(s)\cdot\nabla v^\e_p(s) dx ds\le \int_t^{t+h} \frac{1}{2p}\Big(\|\tau_hv^\e (t)\|_{2p}^{2p}+\|v^{\e}(s)\|_{2p}^{2p}\Big) \\
+\frac{1}{q}\|v_p^{\e}(s)\|_{W^{1,q}}^q ds
\le C\left(h\|\nabla \tau_hv^\e(t)\|_p^d+\int_t^{t+h}\|\nabla v^\e(s)\|_p^d+\|v_p^{\e}(s)\|_{W^{1,q}(\mathbb{R}^d)}^q ds\right) .
\end{multline}

Overall, we have
\begin{multline}
\|\tau_hv^\e-v^\e\|_{L^p(0,T-h;L^p(\mathbb{R}^d))}^p
\le C\Big((\nu+\epsilon)\int_0^{T-h}(\|\nabla\tau_h v^\e(t)\|_p^p+\|\nabla v^\e(t)\|_p^p) h\,dt\\
+(1+\nu+\e)\int_0^{T-h}\int_t^{t+h}\|\nabla v(s)\|_p^p+\|\nabla v_p^\e(s)\|_q^q \,ds dt \\
+\int_0^{T-h}h(\|\nabla\tau_hv^\e(t)\|_p^d+\|\nabla v^\e(t)\|_p^d)dt
+\int_0^{T-h}\int_t^{t+h}\|\nabla v^\e\|_p^d \,ds dt \Big)\\
\le Ch(1+\nu+\epsilon)\left(\|v^\e\|^p_{L^p(0, T; W^{1,p}(\mathbb{R}^d))}+\|v_p^\e\|^q_{L^q(0, T; W^{1,q}(\mathbb{R}^d))}\right)
+Ch\int_0^T\|\nabla v^\e\|_p^d ds.
\end{multline}
If $d\le p$, the last term is bounded above by \[
\left(\int_0^T\|\nabla v^\e\|_p^p dt\right)^{d/p}T^{(p-d)/p}.
\]

Hence, we have
\begin{gather}
\|\tau_hv^\e-v^\e\|_{L^p(0,T-h;L^p(\mathbb{R}^d))}^p\le C h ,
\end{gather}
where $C$ is uniform for $\e<1$.
\end{proof}

The following lemma is a variant of the traditional Aubin-Lions lemma (\cite{cl12, maitre03, simon86}):
\begin{lmm}\label{lmm:aubinlions}
Let $X, B$ be Banach spaces, $p\in [1,\infty)$ and $\mathscr{B}: X \mapsto B$ is a compact mapping (possibly nonlinear). Assume $V$ is bounded subset of $L_{loc}^1(0, T; X)$
such that $U=\mathscr{B}(V)\subset L^p(0, T; B)$ and 
\begin{itemize}
\item $U$ is bounded in $L_{loc}^1(0, T; B)$,
\item $\|\tau_h u-u \|_{L^p(0, T-h; B)}\to 0$ as $h\to 0+$ uniformly for $u\in U$.
\end{itemize}
Then, $U$ is relatively compact in $L^p(0, T; B)$.
\end{lmm}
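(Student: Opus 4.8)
The plan is to prove that $U$ is totally bounded in $L^p(0,T;B)$ by approximating, uniformly over $u\in U$, every element by its time-average and showing the averaged family is precompact; the nonlinearity of $\mathscr{B}$ will be absorbed through Mazur's theorem (the closed convex hull of a compact subset of a Banach space is compact), following the scheme of \cite{simon86,maitre03,cl12}. For $0<h<T$ and $u\in U$ put $u^h(t)=\tfrac1h\int_0^h u(t+s)\,ds$ on $(0,T-h)$. Minkowski's integral inequality and the hypothesis give $\sup_{u\in U}\|u^h-u\|_{L^p(0,T-h;B)}\le\sup_{0<s\le h}\,\sup_{u\in U}\|\tau_s u-u\|_{L^p(0,T-s;B)}=:\omega(h)$ with $\omega(h)\to0$ as $h\to0^+$. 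A routine time-shift bootstrap, using only a single fixed small shift together with the $L^1_{loc}$-bound on $U$, then upgrades the hypotheses to: $U$ is bounded in $L^p(0,T;B)$, and --- comparing $u$ with its average $u^h$ (which for fixed $h$ is uniformly bounded in $L^\infty_{loc}(0,T-h;B)$) --- the family $\{\,\|u(\cdot)\|_B^p:u\in U\,\}$ is uniformly integrable over $(0,T)$; in particular the $L^p(0,T;B)$-mass of $U$ carried near $t=0$ and near $t=T$ is uniformly small. From now on these facts are used freely.

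The core step is that, for each fixed $h>0$ and each $[a,b]\subset(0,T-h)$, the family $\{u^h|_{[a,b]}:u\in U\}$ is relatively compact in $C([a,b];B)$, hence in $L^p(a,b;B)$, by Arzel\`a--Ascoli. Equicontinuity is uniform in $u$: for $t'<t$ with $t-t'<h$ one has $\|u^h(t)-u^h(t')\|_B\le\tfrac1h\bigl(\int_{t'}^{t}\|u\|_B+\int_{t'+h}^{t+h}\|u\|_B\bigr)$, which tends to $0$ as $t-t'\to0$, uniformly in $u$, by the uniform integrability above. For pointwise relative compactness, fix $t$ and $\lambda>0$, write $u=\mathscr{B}(v)$ with $v\in V$, and split $(t,t+h)=G^v_\lambda\cup B^v_\lambda$ with $G^v_\lambda=\{s:\|v(s)\|_X\le\lambda\}$; since $V$ is bounded in $L^1_{loc}$, $|B^v_\lambda|\le\lambda^{-1}\sup_{v\in V}\int_a^{b+h}\|v(s)\|_X\,ds=:M/\lambda$, uniformly in $v$ and in $t\in[a,b]$. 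On $G^v_\lambda$ the values $\mathscr{B}(v(s))$ all lie in the relatively compact set $\mathscr{B}\bigl(\overline{B_X(0,\lambda)}\bigr)$, so $\tfrac1h\int_{G^v_\lambda}\mathscr{B}(v(s))\,ds\in\tfrac{|G^v_\lambda|}{h}\,\overline{\mathrm{conv}}\,\mathscr{B}\bigl(\overline{B_X(0,\lambda)}\bigr)$, which by Mazur's theorem sits inside a fixed compact set $\mathcal K_\lambda\subset B$. On $B^v_\lambda$, $\bigl\|\tfrac1h\int_{B^v_\lambda}\mathscr{B}(v(s))\,ds\bigr\|_B\le\tfrac1h\int_{B^v_\lambda}\|u(s)\|_B\,ds\le h^{-1}\eta(\lambda)$, where $\eta(\lambda)\to0$ as $\lambda\to\infty$ uniformly in $u$ and $t$ by the uniform integrability of $\{\|u(\cdot)\|_B\}$ together with $|B^v_\lambda|\le M/\lambda$. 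Hence $\{u^h(t):u\in U\}$ lies within $h^{-1}\eta(\lambda)$ of $\mathcal K_\lambda$, and letting $\lambda\to\infty$ shows it is totally bounded, i.e.\ relatively compact, in $B$.

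It remains to assemble. Given $\varepsilon>0$, use the first paragraph to pick $\delta\in(0,T/2)$ with $\sup_{u\in U}\|u\|_{L^p((0,\delta)\cup(T-\delta,T);B)}<\varepsilon$, then pick $h\in(0,\delta)$ with $\omega(h)<\varepsilon$, so that $\|u^h-u\|_{L^p(\delta,T-\delta;B)}<\varepsilon$ for all $u$. Since $[\delta,T-\delta]\subset(0,T-h)$, the second paragraph makes $\{u^h|_{[\delta,T-\delta]}:u\in U\}$ totally bounded in $L^p(\delta,T-\delta;B)$, hence so is $\{u|_{[\delta,T-\delta]}:u\in U\}$; combined with the uniform tail estimate this shows $U$ is covered by finitely many balls of radius $O(\varepsilon)$ in $L^p(0,T;B)$. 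As $\varepsilon>0$ was arbitrary and $L^p(0,T;B)$ is complete, $U$ is relatively compact. The main obstacle is the pointwise-compactness step: since $\mathscr{B}$ is nonlinear one cannot decompose the function $u=\mathscr{B}(v)$ itself, and one is forced to split the time interval into the region where $v(s)$ is $X$-bounded --- where compactness of $\mathscr{B}$ and Mazur's theorem apply --- and its small-measure complement, whose contribution can only be controlled using the hypothesis that $U$, and not merely $V$, is bounded in $L^1_{loc}(0,T;B)$; this is exactly what makes that hypothesis indispensable.
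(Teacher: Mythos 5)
The paper never proves Lemma \ref{lmm:aubinlions}: it is quoted as a known variant of the Aubin--Lions lemma with the references \cite{cl12,maitre03,simon86}, so there is no internal proof to compare against. Your argument is a correct, self-contained reconstruction along exactly the lines of those references: Steklov averaging $u^h$ with error $\omega(h)$ controlled uniformly by the translation hypothesis; relative compactness of $\{u^h\}$ on $[a,b]\subset(0,T-h)$ in $C([a,b];B)$ via Arzel\`a--Ascoli, where pointwise compactness comes from the Chebyshev splitting of $(t,t+h)$ into the set where $\|v(s)\|_X\le\lambda$ (handled by compactness of $\mathscr{B}$ and Mazur's theorem, since the mean of a Bochner integral lies in the closed convex hull of the essential range) and a complement of measure at most $M/\lambda$ (handled by the bound on $U$ itself, which is indeed where that hypothesis is indispensable); then a total-boundedness assembly with uniformly small tails. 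Two places deserve a touch more care, neither of which is a genuine gap. First, in the ``routine bootstrap'': the $L^\infty_{loc}$ bound on $u^h$ degenerates at the endpoints, so it only gives uniform integrability on compact subintervals; to get the uniformly small $L^p$-mass of $U$ near $t=0$ and $t=T$, first establish the uniform $L^p(0,T;B)$ bound (fixed shift into the interior, with the reversed-shift identity $\int_h^T\|u(t)-u(t-h)\|_B^p\,dt=\int_0^{T-h}\|\tau_h u-u\|_B^p\,dt$ near $t=T$), and then use $\tfrac1h\int_t^{t+h}\|u\|_B\,ds\le h^{-1/p}\|u\|_{L^p(0,T;B)}$ together with $\|u\|_B\le\tfrac1h\int_t^{t+h}\|u\|_B\,ds+g(t)$, $\|g\|_{L^p}\le\omega(h)$, which yields uniform integrability of $\|u\|_B^p$ over all of $(0,T)$, endpoints included. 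Second, the good-set average lies in $\tfrac{|G^v_\lambda|}{h}\,\overline{\mathrm{conv}}\,\mathscr{B}\bigl(\overline{B_X(0,\lambda)}\bigr)$, so the fixed compact set should be taken as $[0,1]\cdot\overline{\mathrm{conv}}\,\overline{\mathscr{B}\bigl(\overline{B_X(0,\lambda)}\bigr)}$, which is still compact; this is cosmetic.
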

Here, $L^1_{loc}(0, T ; X)$ means the set of functions $f$ such that $\forall 0<t_1<t_2<T, f\in L^1(t_1, t_2; X)$, equipped with the semi-norms $\|f\|_{L^1(t_1, t_2;X)}$. A subset $U$ of $L^1_{loc}(0, T ; X)$
 is called bounded, if $\forall 0<t_1<t_2<T$, $U$ is bounded in $L^1(t_1, t_2; X)$.

\begin{pro}\label{pro:subsequenceconv}
There exists a subsequence $\e_k$, $v\in L^{\infty}(0, T; L^p(\mathbb{R}^d))\cap L^p(0,T; W^{1,p}(\mathbb{R}^d))$ and $\chi\in L^{q}(0,T; L^{q}(\mathbb{R}^d))$, such that as $k\to\infty$, 
\begin{gather}
\begin{split}
& v^{\e_k}\to v ~a.e, ~\mathrm{in}~\mathbb{R}^d\times[0, T), ~\mathrm{strongly~in}~L^p(0,T; L_{loc}^p(\mathbb{R}^d)), \\
& J_{\e_k}*v^{\e_k}\to v,~\mathrm{strongly~in}~L^p(0,T; L_{loc}^p(\mathbb{R}^d)), \\
& v_p^{\e_k} \to |v|^{p-2}v=:v_p, ~\mathrm{strongly~in}~L^q(0,T; L_{loc}^q(\mathbb{R}^d)) ,\\
& v_p^{\e_k}(x, 0) \to v_p(x, 0), ~\mathrm{strongly~in}~L^q(\mathbb{R}^d), \\
& \nabla v^{\e_k}\rightharpoonup \nabla v, ~\mathrm{weakly~in}~L^p(0, T; L^p(\mathbb{R}^d)),\\
& |\nabla v^{\e_k}|^{p-2}\nabla v^{\e_k} \rightharpoonup \chi, ~\mathrm{weakly~in}~L^{q}(0,T; L^{q}(\mathbb{R}^d)). 
\end{split}
\end{gather}
\end{pro}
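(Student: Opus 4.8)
The plan is to obtain the subsequence by a single local compactness argument for $\{v^\e\}$ and then to pass each term of the regularized system \eqref{eq:pNSReg} to the limit; the nonlinearities to be controlled are only the superposition operator $w\mapsto|w|^{p-2}w$ (relating $v^\e$ and $v_p^\e$, and entering the initial data) and the $p$-Laplacian flux.

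\emph{Compactness.} First I would localize. Fix a ball $B_R\subset\mathbb{R}^d$, put $X=W^{1,p}(B_R)$, $B=L^p(B_R)$, and let $\mathscr{B}$ be the compact Rellich--Kondrachov embedding $X\hookrightarrow B$. By \eqref{eq:uniformest}, $V:=\{v^\e|_{B_R}\}_{\e<1}$ is bounded in $L^p(0,T;W^{1,p}(B_R))$, hence $U:=\mathscr{B}(V)$ is bounded in $L^p(0,T;L^p(B_R))$; and since $\|\cdot\|_{L^p(B_R)}\le\|\cdot\|_{L^p(\mathbb{R}^d)}$, Lemma \ref{lmm:timeestimate} gives $\|\tau_h v^\e-v^\e\|_{L^p(0,T-h;L^p(B_R))}\to0$ uniformly in $\e<1$. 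Lemma \ref{lmm:aubinlions} then yields relative compactness of $U$ in $L^p(0,T;L^p(B_R))$. A diagonal extraction over $R=1,2,\dots$ produces $\e_k\to0$ and a limit $v$ with $v^{\e_k}\to v$ strongly in $L^p(0,T;L^p_{loc}(\mathbb{R}^d))$ and, after passing to a further subsequence, a.e.\ in $\mathbb{R}^d\times(0,T)$. From \eqref{eq:uniformest}, Banach--Alaoglu in $L^\infty(0,T;L^p(\mathbb{R}^d))=(L^1(0,T;L^q(\mathbb{R}^d)))'$ and weak compactness in the reflexive space $L^p(0,T;W^{1,p}(\mathbb{R}^d))$ show that the same $v$ lies in $L^\infty(0,T;L^p)\cap L^p(0,T;W^{1,p})$ and that $\nabla v^{\e_k}\rightharpoonup\nabla v$ weakly in $L^p(0,T;L^p(\mathbb{R}^d))$, the weak limit being forced to be $\nabla v$ because $\partial_i$ is continuous on $\mathcal{D}'$.

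\emph{The nonlinear quantities.} The elementary fact I would use is that $w\mapsto|w|^{p-2}w$ is continuous from $L^p(\mathcal{O})$ into $L^q(\mathcal{O})$ for any measurable $\mathcal{O}$: for $p\ge2$ this follows from the companion of Lemma \ref{lmm:basicineq}, namely $\big||a|^{p-2}a-|b|^{p-2}b\big|\le C(p)(|a|+|b|)^{p-2}|a-b|$, together with H\"older applied with the conjugate exponents $\tfrac{p}{(p-2)q}$ and $\tfrac{p}{q}$ (conjugate since $(p-1)q=p$); for $1<p<2$ it follows from $\big||a|^{p-2}a-|b|^{p-2}b\big|\le C(p)|a-b|^{p-1}$ and again $(p-1)q=p$. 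Applying this with the strong convergence $v^{\e_k}\to v$ in $L^p(0,T;L^p(B_R))$ yields $v_p^{\e_k}\to|v|^{p-2}v=:v_p$ strongly in $L^q(0,T;L^q_{loc}(\mathbb{R}^d))$, and applying it to the data, with $J_{\e_k}*v_0\to v_0$ in $L^p(\mathbb{R}^d)$, yields $v_p^{\e_k}(\cdot,0)=|J_{\e_k}*v_0|^{p-2}(J_{\e_k}*v_0)\to|v_0|^{p-2}v_0$ strongly in $L^q(\mathbb{R}^d)$, consistent with the prescribed initial data $v_p(\cdot,0)$. For the mollified drift I would write $J_{\e_k}*v^{\e_k}-v=J_{\e_k}*(v^{\e_k}-v)+(J_{\e_k}*v-v)$; the first term is bounded on $B_R$ by $\|v^{\e_k}-v\|_{L^p(0,T;L^p(B_{R+1}))}\to0$ (shrinking support of $J_\e$ plus Young's inequality, Lemma \ref{lmm:mollify}), and the second tends to $0$ in $L^p(0,T;L^p(\mathbb{R}^d))$ by standard mollification since $v\in L^p(0,T;L^p(\mathbb{R}^d))$. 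Finally $|\nabla v^{\e_k}|^{p-2}\nabla v^{\e_k}$ is bounded in $L^q(0,T;L^q(\mathbb{R}^d))$ because its $q$-th power is $|\nabla v^{\e_k}|^p$, bounded by \eqref{eq:uniformest}; so a further subsequence converges weakly to some $\chi\in L^q(0,T;L^q(\mathbb{R}^d))$. I would not attempt to identify $\chi$ with $|\nabla v|^{p-2}\nabla v$ at this stage, leaving that to a subsequent Minty-type monotonicity argument.

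The step I expect to be the real obstacle is the compactness of $\{v^\e\}$: the regularized equation \eqref{eq:pNSReg} is degenerate and one has no uniform control of $\partial_t v^\e$ in a space that could feed the classical Aubin--Lions lemma, which is exactly why the time-shift estimate (Lemma \ref{lmm:timeestimate}) and the nonlinear Aubin--Lions variant (Lemma \ref{lmm:aubinlions}) are invoked; once those are available, the remaining work --- localization/diagonalization over balls and the continuity of $w\mapsto|w|^{p-2}w$ from $L^p$ to $L^q$ --- is routine bookkeeping.
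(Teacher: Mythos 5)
Your proposal is correct and follows essentially the same route as the paper: the same ingredients (the time-shift estimate of Lemma \ref{lmm:timeestimate}, the Aubin--Lions variant Lemma \ref{lmm:aubinlions} with the Rellich--Kondrachov embedding on bounded sets, exhaustion of $\mathbb{R}^d$ plus a diagonal extraction, then a.e.\ convergence, and weak compactness for $\nabla v^{\e_k}$ and the flux $|\nabla v^{\e_k}|^{p-2}\nabla v^{\e_k}$). The only substantive deviation is how you obtain $v_p^{\e_k}\to v_p$ strongly in $L^q_{loc}$ and the convergence of the initial data: you use the local H\"older-type continuity of the superposition map $w\mapsto |w|^{p-2}w$ from $L^p$ to $L^q$ (via the pointwise bound $\bigl||a|^{p-2}a-|b|^{p-2}b\bigr|\le C(p)(|a|+|b|)^{p-2}|a-b|$ and H\"older with $(p-1)q=p$), whereas the paper combines a.e.\ convergence with convergence of the $L^q$-norms; both arguments are valid, and yours treats $v_p^{\e_k}(\cdot,0)\to |v_0|^{p-2}v_0$ somewhat more explicitly than the paper's brief remark.
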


\begin{proof}
We now apply Lemma \ref{lmm:aubinlions}. The mapping $\mathscr{B}: W^{1,p}(\Omega)\mapsto L^p(\Omega)$ in Lemma \ref{lmm:aubinlions}  is chosen as the identity map. For any $\Omega$ bounded, the embedding of  $W^{1,p}(\Omega)$ into $L^p(\Omega)$ is compact by the Rellich-Kondrachov theorem, and hence $\mathscr{B}$ is a compact operator. We now choose $V$ in Lemma \ref{lmm:aubinlions} as $\{v^{\e}\}$, and hence $U=\mathscr{B}(V)=V=\{v^{\e}\}$.  $U$ is bounded in $L^p(0, T; L^p(\Omega))$ by Proposition \ref{pro:apriori} and the second condition of Lemma \ref{lmm:aubinlions} is verified by Lemma \ref{lmm:timeestimate}. As a result, $\{v^{\e}\}$ is relatively compact in $L^p(0, T; L^p(\Omega))$ for any bounded $\Omega$.

Let us take $\Omega_n\to \mathbb{R}^d$ where $\Omega_n$ is compact. Applying Lemma \ref{lmm:aubinlions},  $\{v^{\e}\}$ has a subsequence that converges to $\chi_1$ in $L^p(0,T; \Omega_1)$ . This sequence again has a subsequence that converges to $\chi_2$ in $L^p(0,T; \Omega_2)$. By the standard diagonal argument,  we can pick out a subsequence such that \[
v^{\e_k}\to \chi_n, ~in~L^p(0,T; L^p(\Omega_n)),~as~k\to\infty, \forall n\ge 1.
\]
Clearly, $v^{\e_k}\to \chi_n$ in $L^p(0,T; L^p(\Omega_m)), m\le n$ as well. Hence, $\chi_n$ agrees with $\chi_m$ on $\Omega_m$ for $m\le n$. Hence, $\chi_n\to v$ a.e. for a measurable function $v$ and $v$ agrees with $\chi_n$ on $\Omega_n$. As a result, $v^{\e_k}\to v$ in $L^p(0,T; \Omega_n)$, or
\[
v^{\e_k}\to v, ~\mathrm{strongly~in}~L^p(0,T; L_{loc}^p(\mathbb{R}^d)).
\] 

Since convergence in $L^p$ implies that there is a subsequence that converges almost everywhere, a diagonal argument confirms that there is a subsequence that converges a.e. to $v$ in $[0, T]\times \mathbb{R}^d$. Without relabeling, we still use $v^{\e_k}$ to denote this subsequence. 

By Fatou's lemma
\begin{gather*}
\int_{t_1}^{t_2}\int_{\mathbb{R}^d}|v|^p dx dt\le \liminf_{k\to\infty}\int_{t_1}^{t_2}\|v^{\e_k}\|_p^p dt \le (t_2-t_1)\|v_0\|_p^p.
\end{gather*}
This implies that $v(\cdot, t)\in L^p(\mathbb{R}^d)$ a.e.. and $\int_0^t\|v\|_p^p ds$ is Lipschitz continuous with the Lipschitz constant to be $\|v_0\|_p^p$. As a result, 
\[
v\in L^{\infty}(0,T; L^p(\mathbb{R}^d)), ~ \|v\|_{L^{\infty}(0,T;L^p)}\le \|v_0\|_p.
\]

Now, for a fixed $\Omega_n$, we can pick a bigger set $K$ such that $J_{\e_k}*(\chi(K)\varphi)=J_{\e_k}*\varphi$ on $\Omega_n$ for any locally integrable function when $\e_k$ is small enough.
\begin{multline*}
\|J_{\e_k}*v^{\e_k}-v\|_{L^p(0,T; L^p(\Omega_n))}
\le \|J_{\e_k}*v^{\e_k}-J_{\e_k}*v\|_{L^p(0,T; L^p(\Omega_n))}+ \|J_{\e_k}*v-v\|_{L^p(0,T; L^p(\mathbb{R}^d))}\\
\le   \|J_{\e_k}*(\chi(K)v^{\e_k})-J_{\e_k}*(\chi(K)v)\|_{L^p(0,T; L^p(\mathbb{R}^d))}+ \|J_{\e_k}*v-v\|_{L^p(0,T; L^p(\mathbb{R}^d))}.
\end{multline*}
We have $ \|J_{\e_k}*v-v\|_{L^p(0,T; L^p(\mathbb{R}^d))}\to 0$ by the mollifying.
For the first, we can easily bound it by
\[
\|\chi(K)v^{\e_k}-\chi(K) v\|_{L^p(0,T; \mathbb{R}^d)}
=\|v^{\e_k}-v\|_{L^p(0,T; K)}\to 0.
\]

For any compact set $\Omega_n$, we have 
\[
\|v_p^{\e_k}\|_{L^q(0,T;L^q(\Omega_n))}=\|v^{\e_k}\|_{L^p(0,T; L^p(\Omega_n))}^{p/q}\to \|v\|_{L^p(0,T; L^p(\Omega_n))}^{p/q}=\|v_p\|_{L^q(0,T;L^q(\Omega_n))}.
\]
By the a.e. convergence, we conclude that \[
v_p^{\e_k}\to v_p,~\text{strongly in}~L^q(0,T; L^q(\Omega_n)).
\]

Further, by the definition, $v^{\e_k}(x, 0)=J_{\e_k}*v_0$, we find that $v^{\e_k}\to v_0$ a.e. and in $L^p$ strongly. It is also easy to see that there is a subsequence without relabeling such that $v_p^{\e_k}(x, 0)\to v_p(x, 0)$ in $L^q(\mathbb{R}^d)$. 

Since $\nabla v^{\e_k}$ is bounded in $L^p(0,T; L^p(\mathbb{R}^d))$, the set is weakly pre-compact in $L^p(t_1, t_2; L^p(\mathbb{R}^d))$. With a diagonal argument again, we are able to pick a subsequence (without relabeling) such that we have the weak convergence $\nabla v^{\e_k}\rightharpoonup \zeta\in L^p(t_1, t_2; L^p(\mathbb{R}^d))$ for all $t_1, t_2\in \mathbb{Q}\cap [0, T]$, $t_1<t_2$.  We pick $\varphi\in C_c^{\infty}(\mathbb{R}^d\times [0, T))\subset L^q(t_1, t_2; L^q(\mathbb{R}^d))=(L^p(t_1, t_2; L^p(\mathbb{R}^d)))'$, $\forall 0\le t_1<t_2\le T$, $t_1, t_2\in\mathbb{Q}$. Then, 
\begin{multline*}
\int_{t_1}^{t_2}\int_{\mathbb{R}^d}\zeta:\varphi \,dx dt=\lim_{k\to\infty}\int_{t_1}^{t_2}\int_{\mathbb{R}^d}\nabla v^{\e_k}:\varphi \,dxdt\\
=-\lim_{k\to\infty}\int_{t_1}^{t_2}\int_{\mathbb{R}^d}v^{\e_k}\cdot (\nabla\cdot\varphi) dxdt
=-\int_{t_1}^{t_2}\int_{\mathbb{R}^d}v\cdot (\nabla\cdot\varphi) dxdt.
\end{multline*}
where the last equality is by the fact that $v^{\e_k}\to v$ in $L^p(0,T; L_{loc}^p)$. As a result, \[
\int_{\mathbb{R}^d}\zeta:\varphi \,dx=-\int_{\mathbb{R}^d}v\cdot (\nabla\cdot\varphi) dx,~ a.e. t\in [0, T] .
\]
This is true for any $\varphi$. Now, since $C_c^{\infty}(\mathbb{R}^d\times [0, T])$ is separable with the topology of test function, we take a countable dense set $\{\varphi_n\}\subset C_c^{\infty}(\mathbb{R}^d\times [0, T])$. Then, there exists a set measure zero, $\Gamma$, such that  
\[
\int_{\mathbb{R}^d}\zeta:\varphi_n dx=-\int_{\mathbb{R}^d}v\cdot (\nabla\cdot\varphi_n) dx,~ \forall  t\in [0, T]\setminus \Gamma, \forall n.
\]
As a result, this equation is actually true $\forall \varphi\in C_c^{\infty}$ and $t\in [0, T]\setminus \Gamma$. Hence, 
\[
\nabla v=\zeta, ~\forall t\in [0, T]\setminus \Gamma.
\]
This then implies \[
v\in L^p(0, T; W^{1,p}(\mathbb{R}^d)).
\]

Lastly, we notice \[
\||\nabla v^{\e_k}|^{p-2}\nabla v^{\e_k}\|^{q}_{L^{q}(0, T; L^{q}(\mathbb{R}^d))}
=\int_0^T\int_{\mathbb{R}^d}||\nabla v^{\e_k}|^{p-2}\nabla v^{\e_k}|^{q}dx dt
=\int_0^T\int_{\mathbb{R}^d}|\nabla v^{\e_k}|^p dx dt<C .
\]
We again use the weak compactness and then done.
\end{proof}

\subsection{Time regularity and identifying $\chi$}\label{subsec:chi}

We now show the time regularity of some quantities and  identify $\chi$ with $|\nabla v|^{p-2}\nabla v$.
The essential ideas are the same as those in \cite{mm12} and \cite{cl16}. The time regularity estimates in this section are used to conclude Equation \eqref{eq:secondtime}, so that we can conclude Lemma \ref{lmm:importantlmm}. Together the Minty-Browder criterion (Lemma \ref{lmm:weakmono}), we identify $\chi$ in Proposition \ref{pro:chi}.

For the convenience of discussion, we introduce
\begin{gather}
\begin{split}
& V=\{v\in L^p(0, T; W^{1,p}(\mathbb{R}^d)), ~ \nabla \cdot v=0\}.
\end{split}
\end{gather}
Since $L^p(0, T; W^{1,p}(\mathbb{R}^d))$ is reflexive when $p\in (1,\infty)$, we conclude that $V$ is reflexive since it is a closed subspace of $L^p(0, T; W^{1,p}(\mathbb{R}^d))$.

Let $X$ be reflexive. An operator $G: X\rightarrow X'$ is called monotone
if $\forall v_1, v_2\in X$, \[
\langle Gv_1-Gv_2, v_1-v_2 \rangle \ge 0.
\]
By Lemma \ref{lmm:basicineq}, $-\Delta_p$ is clearly monotone. 
Now, fix $\phi\in C_c^{\infty}[0, T), \phi\ge 0$. Combining the result in \cite{mm12} and the fact that $|\nabla v^{\e_k}|^{p-2}\nabla v^{\e_k}\rightharpoonup \chi$ in $L^q(0,T; L^q(\mathbb{R}^d))$, we have
\begin{lmm}\label{lmm:boundedofpLap}
$-\Delta_p: L^p(0,T; \mathbb{R}^d)\to L^q(0,T; W^{-1,q}(\mathbb{R}^d))$ is a bounded, monotone operator and $\forall v_1, v_2\in L^p(0,T; \mathbb{R}^d)$, the mapping $\lambda\mapsto \langle \Delta_p(v_1+\lambda v_2), v_2\rangle$ is continuous.
There is a subsequence without relabeling that \[
\Delta_p v^{\e_k}\rightharpoonup \theta,~\mathrm{in}~ L^q(0,T; W^{-1,q}(\mathbb{R}^d)).
\]
and therefore also in $V'$, where $\theta$ is given by \[
\langle \theta, u\rangle =-\int_0^T \chi:\nabla u \,dx dt,~ u\in L^p(0, T; W^{1,p}(\mathbb{R}^d)) .
\]
\end{lmm}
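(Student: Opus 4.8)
The plan is to establish the three structural properties of $-\Delta_p$ (boundedness, monotonicity, hemicontinuity) by reducing each to a pointwise statement about the integrand, and then to extract the weak limit $\theta$ using the weak convergence $|\nabla v^{\e_k}|^{p-2}\nabla v^{\e_k}\rightharpoonup\chi$ already obtained in Proposition \ref{pro:subsequenceconv}. Throughout, one works with the duality
\[
\langle -\Delta_p v, u\rangle=\int_0^T\int_{\mathbb{R}^d}|\nabla v|^{p-2}\nabla v:\nabla u\,dx\,dt, \qquad v, u\in L^p(0,T; W^{1,p}(\mathbb{R}^d)),
\]
where $\nabla v$ is regarded as an element of $\mathbb{R}^{d\times d}$ and $|\nabla v|$ is its Frobenius norm.

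Boundedness in the monotone-operator sense (bounded sets are mapped to bounded sets) is immediate from H\"older's inequality: the right-hand side above is at most $\|\nabla v\|_{L^p(0,T;L^p)}^{p-1}\|\nabla u\|_{L^p(0,T;L^p)}$, hence $\|-\Delta_p v\|_{L^q(0,T;W^{-1,q})}\le\|\nabla v\|_{L^p(0,T;L^p)}^{p-1}$. For monotonicity I would apply Lemma \ref{lmm:basicineq} pointwise in $(x,t)$; although that lemma is stated for vectors in $\mathbb{R}^d$, its proof never uses the ambient dimension, so it applies verbatim with $\eta_1,\eta_2$ replaced by $\nabla v_1(x,t),\nabla v_2(x,t)\in\mathbb{R}^{d\times d}$. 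Integrating yields $\langle -\Delta_p v_1+\Delta_p v_2, v_1-v_2\rangle\ge C(p)\int_0^T\int_{\mathbb{R}^d}(|\nabla v_1|+|\nabla v_2|)^{p-2}|\nabla v_1-\nabla v_2|^2\,dx\,dt\ge 0$. For hemicontinuity, fix $v_1,v_2\in L^p(0,T;W^{1,p}(\mathbb{R}^d))$; the integrand $|\nabla v_1+\lambda\nabla v_2|^{p-2}(\nabla v_1+\lambda\nabla v_2):\nabla v_2$ is continuous in $\lambda$ for a.e. $(x,t)$, and on any bounded range $|\lambda|\le M$ it is dominated by $(|\nabla v_1|+M|\nabla v_2|)^{p-1}|\nabla v_2|$, which lies in $L^1(0,T;L^1(\mathbb{R}^d))$ by H\"older since the first factor is in $L^q$ and $\nabla v_2\in L^p$. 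Dominated convergence then gives continuity of $\lambda\mapsto\langle\Delta_p(v_1+\lambda v_2),v_2\rangle$.

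Finally, $\{v^{\e_k}\}$ is bounded in $L^p(0,T;W^{1,p}(\mathbb{R}^d))$ by Proposition \ref{pro:apriori}, so by the boundedness above $\{\Delta_p v^{\e_k}\}$ is bounded in the reflexive space $L^q(0,T;W^{-1,q}(\mathbb{R}^d))$; passing to a further subsequence, $\Delta_p v^{\e_k}\rightharpoonup\theta$ there. Testing against any $u\in L^p(0,T;W^{1,p}(\mathbb{R}^d))$, the left side converges to $\langle\theta,u\rangle$ while the right side $-\int_0^T\int_{\mathbb{R}^d}|\nabla v^{\e_k}|^{p-2}\nabla v^{\e_k}:\nabla u\,dx\,dt$ converges to $-\int_0^T\int_{\mathbb{R}^d}\chi:\nabla u\,dx\,dt$, because $\nabla u\in L^p(0,T;L^p)$ and $|\nabla v^{\e_k}|^{p-2}\nabla v^{\e_k}\rightharpoonup\chi$ in $L^q(0,T;L^q)$; this identifies $\theta$. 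Restricting the test functions to the closed subspace $V$ gives the same weak convergence in $V'$. There is no genuine obstacle here: the only two spots requiring care are the tensor-valued version of Lemma \ref{lmm:basicineq} and the choice of dominating function in the hemicontinuity step, both of which are routine; the argument follows the standard template for monotone operators of $p$-Laplacian type, and the reference \cite{mm12} supplies any remaining details.
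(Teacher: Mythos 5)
Your proposal is correct and follows the same route the paper relies on: the paper simply delegates the boundedness/monotonicity/hemicontinuity of $-\Delta_p$ to \cite{mm12} (with monotonicity already noted via Lemma \ref{lmm:basicineq}) and identifies $\theta$ from the weak convergence $|\nabla v^{\e_k}|^{p-2}\nabla v^{\e_k}\rightharpoonup\chi$ in $L^q(0,T;L^q)$, which is exactly what you do, just with the standard H\"older, pointwise-monotonicity, and dominated-convergence details written out. The two points you flag (the tensor-valued version of Lemma \ref{lmm:basicineq} and the dominating function) are indeed routine, so no gap remains.
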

Recall that $ \chi: \nabla u =\sum_{ij} \chi_{ij}\partial_i u_j$ by Equation \eqref{eq:tensordot}.  

We now consider the weak convergence of time derivatives. First consider the time derivative of $H$:
\begin{lmm}\label{lmm:reguH}
Let $H^{\e}(t)=\frac{1}{p}\int_{\Omega}|v^{\e}|^p dx$ and $\mathcal{M}[0, T]$ denote the set of Radon measures on $[0, T]$. Then, $-\frac{d}{dt}H\in \mathcal{M}[0, T]$ is nonnegative and
\begin{gather}
-\frac{d}{dt}H^{\e_k}\overset{*}{\rightharpoonup} -\frac{d}{dt}H, ~\mathrm{in}~\mathcal{M}[0, T].
\end{gather}
where the time derivative is understood as in Definition \ref{def:weakderi}.
\end{lmm}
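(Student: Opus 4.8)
The plan is to combine the monotonicity of the energies $H^{\e}$ with weak-$*$ compactness of bounded sets in $\mathcal M[0,T]=(C[0,T])'$, and then identify the weak-$*$ limit as the distributional time derivative, in the sense of Definition \ref{def:weakderi}, of the limiting energy.

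First I would record that $H^{\e}(t)=\frac1p\|v^{\e}(\cdot,t)\|_{L^p(\mathbb R^d)}^p$ is continuous and non-increasing in $t$: this is \eqref{eq:pnorm} of Proposition \ref{pro:apriori}, up to the positive constant $q/p$, since the right-hand side there is $\le 0$. Hence the distributional derivative of $H^{\e}$ on $[0,T)$ — which carries no atom at $t=0$ in the sense of Definition \ref{def:weakderi}, because $H^{\e}$ is continuous and $H^{\e}(0)=\frac1p\|J_{\e}*v_0\|_{L^p}^p$ is taken as its initial datum — is a non-positive Radon measure, so $-\frac{d}{dt}H^{\e}\ge 0$, with total mass
\[
\Big\|\tfrac{d}{dt}H^{\e}\Big\|_{\mathcal M[0,T]}=H^{\e}(0)-H^{\e}(T)\le H^{\e}(0)\le\tfrac1p\|v_0\|_{L^p(\mathbb R^d)}^p ,
\]
uniformly in $\e$ (using $\|J_{\e}*v_0\|_{L^p}\le\|v_0\|_{L^p}$ from Lemma \ref{lmm:mollify}). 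By Banach--Alaoglu there is a subsequence of $\{\e_k\}$, not relabeled, along which $-\frac{d}{dt}H^{\e_k}\overset{*}{\rightharpoonup}\mu$ for some non-negative $\mu\in\mathcal M[0,T]$; moreover, since the $H^{\e_k}$ are uniformly bounded and non-increasing, after a further (unrelabeled) extraction Helly's selection theorem produces a non-increasing $G:[0,T]\to[0,\infty)$ with $H^{\e_k}(t)\to G(t)$ for every $t$. By the a.e.\ convergence $v^{\e_k}\to v$ of Proposition \ref{pro:subsequenceconv} and Fatou's lemma, $G(t)\ge\frac1p\|v(\cdot,t)\|_{L^p}^p$ for a.e.\ $t$; I take $H:=G$ — which is the quantity needed downstream, and equals $\frac1p\|v(\cdot,t)\|_{L^p}^p$ for a.e.\ $t$ provided no $L^p$ mass escapes to spatial infinity (see below). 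In particular $G(0)=\lim_k\frac1p\|J_{\e_k}*v_0\|_{L^p}^p=\frac1p\|v_0\|_{L^p}^p=:H_0$.

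It remains to identify $\mu$ with $-\frac{d}{dt}H$. Fix $\varphi\in C_c^{\infty}[0,T)$; it extends continuously by $0$ to $[0,T]$, so weak-$*$ convergence applies to it, and since $H^{\e_k}$ is continuous with initial datum $H_0^{\e_k}:=\frac1p\|J_{\e_k}*v_0\|_{L^p}^p$, Definition \ref{def:weakderi} yields
\[
\Big\langle-\tfrac{d}{dt}H^{\e_k},\varphi\Big\rangle=\int_0^T\big(H^{\e_k}(t)-H_0^{\e_k}\big)\varphi'(t)\,dt .
\]
Letting $k\to\infty$: the left side tends to $\langle\mu,\varphi\rangle$, while on the right $H_0^{\e_k}\to H_0$ and, by dominated convergence (the $H^{\e_k}$ are bounded and converge pointwise to $H=G$ on the finite interval), the integral tends to $\int_0^T(H(t)-H_0)\varphi'(t)\,dt=-\langle H-H_0,\varphi'\rangle$. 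Hence $\langle\mu,\varphi\rangle=-\langle H-H_0,\varphi'\rangle$ for all such $\varphi$, i.e.\ $\mu=-\frac{d}{dt}H$ in the sense of Definition \ref{def:weakderi}; non-negativity of $-\frac{d}{dt}H$ is inherited from that of each $-\frac{d}{dt}H^{\e_k}$ (equivalently, from monotonicity of $G$).

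The main obstacle is not the measure-theoretic machinery (monotonicity, the uniform mass bound, Banach--Alaoglu, Helly, and the elementary pairing limit are all routine), but rather pinning down what $H$ \emph{is}: whether the Helly limit $G$ of the energies coincides a.e.\ with the energy $\frac1p\|v(\cdot,t)\|_{L^p}^p$ of the limit $v$, i.e.\ whether $L^p$ mass can escape to spatial infinity as $k\to\infty$ (recall the compactness in Proposition \ref{pro:subsequenceconv} is only local in space). For this I would use the local strong convergence $v^{\e_k}\to v$ in $L^p(0,T;L^p_{loc}(\mathbb R^d))$ together with the uniform higher integrability $v^{\e_k}\in L^{2p}(\mathbb R^d\times(0,T))$ coming from the Gagliardo--Nirenberg bound already exploited in Lemma \ref{lmm:timeestimate} (which rules out concentration), reducing the matter to a tightness estimate at large $|x|$. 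If one is content to keep $H:=G$, this last point is not needed and the lemma follows directly from the construction above, with the a priori inequality $G\ge\frac1p\|v(\cdot,t)\|_{L^p}^p$ being the property actually used in the sequel.
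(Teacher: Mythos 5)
Your argument is correct and is essentially the paper's proof: by \eqref{eq:pnorm} the dissipation $-\frac{d}{dt}H^{\e}$ is a nonnegative $L^1[0,T]$ function whose mass is bounded uniformly in $\e$ by the initial energy (via Lemma \ref{lmm:mollify}), Banach--Alaoglu in $\mathcal{M}[0,T]=(C[0,T])'$ yields a weak-$*$ convergent subsequence, and the limit is identified by pairing with $\phi\in C_c^{\infty}[0,T)$ through Definition \ref{def:weakderi}. The one place you deviate is the step $\lim_k\langle H^{\e_k},\phi'\rangle=\langle H,\phi'\rangle$: the paper asserts this directly, with $H$ understood as the energy of the limit $v$ (which is how $H$ is used later, e.g.\ in \eqref{eq:aux3}), whereas you insert a Helly extraction to obtain a pointwise monotone limit $G$ of the $H^{\e_k}$, pass to the limit by dominated convergence, and then flag that identifying $G$ with $\frac1p\|v(\cdot,t)\|_{L^p(\mathbb{R}^d)}^p$ requires excluding escape of $L^p$ mass to spatial infinity, since Proposition \ref{pro:subsequenceconv} gives only strong convergence in $L^p(0,T;L^p_{loc}(\mathbb{R}^d))$ and Fatou alone yields $G\ge\frac1p\|v(\cdot,t)\|_{L^p}^p$. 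This caution is legitimate: your construction proves the lemma verbatim with $H:=G$, and the upgrade of $G$ to the energy of $v$ (which is what the chain-rule identity in Lemma \ref{lmm:importantlmm} actually uses) does call for a tightness estimate at large $|x|$ in addition to the Gagliardo--Nirenberg control of concentration you cite; the paper passes over this identification silently, so your treatment is, if anything, the more explicit one.
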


\begin{proof}
By \eqref{eq:pnorm}, $-\frac{d}{dt}H^{\e_k}\in L^1[0, T]$ nonnegative. $L^1[0, T]\subset \mathcal{M}[0,T]=(C[0, T])'$ while $C[0, T]$ is separable. By Banach-Alaoglu theorem (any bounded set in the dual of a separable space is weak star precompact), there is a subsequence such that \[
-\frac{d}{dt}H^{\e_k}\overset{*}{\rightharpoonup}\alpha,~\text{in}~ \mathcal{M}[0, T] .
\] 

$\forall \phi\in C_c^{\infty}[0, T)$, \[
\langle \alpha, \phi \rangle =\lim_{k\to\infty}\langle  -\frac{d}{dt}H^{\e_k}, \phi \rangle
=\lim_{k\to\infty}\langle H^{\e_k}, \phi'(t) \rangle+\phi(0)H(0)=\langle H, \phi'(t) \rangle+\phi(0)H(0).
\]
Hence, $\alpha=-H'(t)$ in distribution sense as in Definition \ref{def:weakderi}.
\end{proof}

We now move onto the time regularity of $v_p$. We need the projection operator $\mathcal{P}$ (called Leray projection in some references) that maps a field to a divergence free field. Let $\Omega$ be a simply connected domain. Let $F_p(\Omega)$ be the completion of $\{w\in C_c^{\infty}(\Omega): \nabla\cdot w=0\}$ under the $L^p(\Omega)$ norm, and $G_p(\Omega)=\{w\in L^p(\Omega): \exists \alpha\in W_{loc}^{1,p}(\Omega), w=\nabla \alpha \}$. The following combines Theorem III.1.2 in \cite{galdi11} and relevant discussion there:
\begin{lmm}\label{lmm:hw}
Let $d\ge 2$ and $p\in (1,\infty)$. If $\Omega=\mathbb{R}^d$, a half-space or a bounded domain with $C^2$ boundary, then the Helmoholtz-Weyl decomposition holds:
\[
L^p(\Omega)=F_p(\Omega)\oplus G_p(\Omega).
\]
This then defines a projection operator $\mathcal{P}: L^p(\Omega)\to F_p(\Omega)$.
$\forall w\in L^p(\Omega)$,  there is a constant $C(p,\Omega)$ such that \[
\|\mathcal{P}w\|_p\le C(p,\Omega)\|w\|_p.
\]
\end{lmm}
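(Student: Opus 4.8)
The plan is to build the decomposition constructively: given $w\in L^p(\Omega)$, peel off a gradient by solving a weak Neumann problem and take the solenoidal part to be the remainder. Concretely, I would look for $\alpha$ with $\nabla\alpha\in L^p(\Omega)$ solving
\[
\int_\Omega \nabla\alpha\cdot\nabla\varphi\,dx=\int_\Omega w\cdot\nabla\varphi\,dx\qquad\text{for all }\varphi\in C_c^\infty(\overline{\Omega}),
\]
which is the variational form of $\Delta\alpha=\nabla\cdot w$ in $\Omega$ with the natural boundary condition $\partial_n\alpha=w\cdot n$ on $\partial\Omega$. For $\Omega=\mathbb{R}^d$ one writes $\alpha$ via the Newtonian potential, so that $\nabla\alpha=\nabla(-\Delta)^{-1}\nabla\cdot w$ and the bound $\|\nabla\alpha\|_p\le C(p)\|w\|_p$ is just boundedness of the double Riesz transform on $L^p$; for a half-space one reduces to this by an even/odd reflection; for a bounded $C^2$ domain one invokes the $L^p$ theory for the Neumann Laplacian. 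Setting $u:=w-\nabla\alpha$, the displayed identity says precisely that $u$ is weakly divergence free with vanishing normal trace, i.e. $u\in F_p(\Omega)$, so $w=u+\nabla\alpha$ with $u\in F_p(\Omega)$, $\nabla\alpha\in G_p(\Omega)$, and $\|u\|_p\le(1+C)\|w\|_p$.

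Next I would check that the sum is direct, $F_p(\Omega)\cap G_p(\Omega)=\{0\}$. If $\nabla\beta\in F_p(\Omega)$, pick $w_n\in C_c^\infty(\Omega)$ with $\nabla\cdot w_n=0$ and $w_n\to\nabla\beta$ in $L^p$; for any $\psi\in C_c^\infty(\overline{\Omega})$ one has $\int_\Omega w_n\cdot\nabla\psi\,dx=-\int_\Omega(\nabla\cdot w_n)\psi\,dx=0$, and letting $n\to\infty$ gives $\int_\Omega\nabla\beta\cdot\nabla\psi\,dx=0$ for all such $\psi$. Testing against a dense family of $L^q$ gradients ($1/p+1/q=1$) then forces $\nabla\beta=0$. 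Hence $L^p(\Omega)=F_p(\Omega)\oplus G_p(\Omega)$ is a topological direct sum, the map $\mathcal{P}w:=u$ is well defined, linear, and bounded with $\|\mathcal{P}w\|_p\le C(p,\Omega)\|w\|_p$ by the estimate above, and its range is the closed subspace $F_p(\Omega)$.

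The main obstacle is the $L^p$ solvability of the weak Neumann problem together with the Calder\'on--Zygmund bound and the correct choice of homogeneous space in which $\nabla\alpha$ is uniquely determined. Over $\mathbb{R}^d$ this is classical singular-integral theory, and the half-space follows by reflection, but for a bounded $C^2$ domain it needs the full Agmon--Douglis--Nirenberg $L^p$ estimates (or a layer-potential argument) for the Neumann Laplacian. Since this is exactly the content of Theorem III.1.2 of \cite{galdi11} and the accompanying discussion, in the write-up I would ultimately cite that reference rather than reproduce the singular-integral machinery in detail.
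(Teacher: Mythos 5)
Your proposal is essentially the paper's own approach: the paper offers no independent proof of this lemma, but simply invokes Theorem III.1.2 of \cite{galdi11} (with the remark that for $\Omega=\mathbb{R}^d$ the result is classical Calder\'on--Zygmund theory), and your write-up ends at the same citation, with the weak Neumann problem sketch being precisely the mechanism behind Galdi's theorem. Two caveats if the sketch were meant to stand alone: the step ``$u$ is weakly divergence free with vanishing normal trace, i.e.\ $u\in F_p(\Omega)$'' silently uses the nontrivial characterization of $F_p(\Omega)$ for these domains (a separate theorem in \cite{galdi11}; the paper's example $(2x,-2y)$ on the unit disk shows divergence-free alone is not enough), and ``testing against a dense family of $L^q$ gradients forces $\nabla\beta=0$'' is not literally valid --- orthogonality to gradients does not annihilate a vector field --- so one must run the duality argument using $L^q$-solvability of the weak Neumann problem (or, on $\mathbb{R}^d$, a Liouville argument for the harmonic field $\nabla\beta$). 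Since you defer these points to \cite{galdi11}, exactly as the paper does, the proposal is acceptable as written.
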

\begin{rmk}
In the case $\Omega$ is bounded with smooth boundary, though the completion of $C_c^{\infty}(\Omega)$ under $L^p(\Omega)$ is the whole $L^p(\Omega)$ with any boundary conditions, the element $w\in F_p(\Omega)$ satisfies the no-flux boundary condition $w\cdot \b{n}=0$ in the weak sense. To see this, pick $\nabla\varphi\in G_q(\Omega)$ with $1/p+1/q=1$. Pick $w_n\in C_c^{\infty}(\Omega), \nabla\cdot w_n=0$ and $w_n\to w$ in $L^p(\Omega)$.  Clearly, $\int_{\Omega}w_n\cdot \nabla\varphi dx=0$.
Taking $n\to\infty$, we have $\int_{\Omega}w\cdot \nabla\varphi dx=0$. By the arbitrariness of $\nabla\varphi$, we conclude that $w$ satisfies the no-flux condition. If $d=2$ and $\Omega$ is the unit disk,  though $( 2x, -2y)$ is divergence free (also curl free),
\[
( 2x, -2y) \notin F_p(\Omega).
\]
\end{rmk}
\begin{rmk}
For general unbounded domains and $p\neq 2$, the Helmoholtz-Weyl decomposition may not be valid. See \cite{fks07, mb86}.
\end{rmk}
In the case that $\Omega=\mathbb{R}^d$, Lemma \ref{lmm:hw} follows directly from the Calderon-Zygmund theory for singular integrals in harmonic analysis (\cite{cz52, stein16}). 

If the field is in $W^{1,p}(\mathbb{R}^d)$, we have similarly
\begin{lmm}\label{lmm:controlproj}
Let $\varphi\in W^{1,p}(\mathbb{R}^d)$, then there exists $C(d)>0$ such that,\[
\|\mathcal{P}\varphi\|_{W^{1,p}}\le C(d)\|\varphi\|_{W^{1,p}}.
\] 
\end{lmm}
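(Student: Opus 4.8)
The plan is to exploit the fact that on $\mathbb{R}^d$ the Leray projection is a constant-coefficient Fourier multiplier: writing $\mathcal{P}=I-\nabla(-\Delta)^{-1}\nabla\cdot$, its symbol is the matrix $m(\xi)=I-|\xi|^{-2}\,\xi\otimes\xi$, each entry of which is (up to sign) a composition of Riesz transforms. Lemma \ref{lmm:hw}, which in the $\Omega=\mathbb{R}^d$ case is exactly the Calderon-Zygmund / Mikhlin multiplier theorem, already gives $\|\mathcal{P}w\|_p\le C(d)\|w\|_p$ for all $w\in L^p(\mathbb{R}^d)$. Since $m(\xi)$ does not depend on $x$, $\mathcal{P}$ commutes with translations, hence with every partial derivative $\partial_i$; the whole lemma will follow by combining this commutation with the $L^p$-bound.

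First I would record the commutation identity $\partial_i(\mathcal{P}\varphi)=\mathcal{P}(\partial_i\varphi)$ for $\varphi\in W^{1,p}(\mathbb{R}^d)$, with $\partial_i$ read as a weak derivative. For $\varphi\in C_c^{\infty}(\mathbb{R}^d)$ this is immediate from the Fourier representation (or from the fact that $\mathcal{P}$ commutes with the translation group and $\partial_i$ generates it in the direction $e_i$). For general $\varphi\in W^{1,p}(\mathbb{R}^d)$ I would pick $\varphi_n\in C_c^{\infty}(\mathbb{R}^d)$ with $\varphi_n\to\varphi$ in $W^{1,p}$; then $\mathcal{P}\varphi_n\to\mathcal{P}\varphi$ and $\partial_i(\mathcal{P}\varphi_n)=\mathcal{P}(\partial_i\varphi_n)\to\mathcal{P}(\partial_i\varphi)$, both in $L^p(\mathbb{R}^d)$ by the boundedness of $\mathcal{P}$, hence in $\mathscr{D}'(\mathbb{R}^d)$. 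Since distributional differentiation is continuous for $\mathscr{D}'$-convergence, passing to the limit gives $\partial_i(\mathcal{P}\varphi)=\mathcal{P}(\partial_i\varphi)$ in $\mathscr{D}'$, and in particular $\partial_i(\mathcal{P}\varphi)\in L^p(\mathbb{R}^d)$, so $\mathcal{P}\varphi\in W^{1,p}(\mathbb{R}^d)$.

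The estimate then follows at once: applying the $L^p$-bound of Lemma \ref{lmm:hw} to $\varphi$ and to each $\partial_i\varphi$ gives $\|\mathcal{P}\varphi\|_p\le C(d)\|\varphi\|_p$ and $\|\partial_i(\mathcal{P}\varphi)\|_p=\|\mathcal{P}(\partial_i\varphi)\|_p\le C(d)\|\partial_i\varphi\|_p$; summing the $p$-th powers yields $\|\mathcal{P}\varphi\|_{W^{1,p}}\le C(d)\|\varphi\|_{W^{1,p}}$. I do not expect a real obstacle here; the only point needing a little care is the density/continuity argument in the previous step that lifts the commutation $\partial_i\mathcal{P}=\mathcal{P}\partial_i$ from smooth functions to weak derivatives, which is what justifies that $\mathcal{P}$ actually maps $W^{1,p}(\mathbb{R}^d)$ into itself rather than just $L^p$ into $L^p$.
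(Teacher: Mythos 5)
Your argument is correct, but it is organized differently from the paper's. You obtain the gradient bound by commuting the projection with derivatives: since on $\mathbb{R}^d$ the projection of Lemma \ref{lmm:hw} is translation invariant (translating a Helmholtz--Weyl decomposition gives another one, so uniqueness yields $\tau_h\mathcal{P}=\mathcal{P}\tau_h$, equivalently $\mathcal{P}$ is the constant-coefficient multiplier $I-\xi\otimes\xi/|\xi|^2$), you get $\partial_i(\mathcal{P}\varphi)=\mathcal{P}(\partial_i\varphi)$ first for $\varphi\in C_c^{\infty}$ and then for all of $W^{1,p}$ by density and $L^p$-continuity, after which the estimate is just Lemma \ref{lmm:hw} applied to $\varphi$ and to each $\partial_i\varphi$. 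The paper instead works with the explicit gradient part of the decomposition, writing $\varphi=\mathcal{P}\varphi+\nabla\phi_0$ with $\phi_0=-\Phi_d*(\nabla\cdot\varphi)$ the Newtonian potential of the divergence, and estimates $\|\nabla\nabla\phi_0\|_{L^p}\le A_d\|\nabla\cdot\varphi\|_{L^p}$ by the Calder\'on--Zygmund bound for the Hessian of $\Phi_d$, while $\|\nabla\phi_0\|_{L^p}\le C\|\varphi\|_{L^p}$ comes from the $L^p$-boundedness of $\mathcal{P}$. The two routes rest on the same harmonic-analysis input (Riesz-transform/CZ bounds, which is where Lemma \ref{lmm:hw} comes from on $\mathbb{R}^d$), but yours reuses only the stated $L^p$ bound plus translation invariance and so avoids invoking a separate second-derivative singular-integral estimate, at the cost of the density/commutation step; the paper's is more concrete and makes visible exactly which operator ($\nabla\nabla\Phi_d*$) is being bounded. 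The only point you should make explicit is the identification of the Helmholtz--Weyl projection with the translation-invariant multiplier (or directly its commutation with translations), which is what licenses $\partial_i\mathcal{P}=\mathcal{P}\partial_i$ on smooth functions; with that said, your proof is complete.
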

\begin{proof}
Consider the Helmholtz-Weyl decomposition,\[
\varphi=\mathcal{P}\varphi+\nabla \phi.
\] 
Then, $\phi$ is determined up to a constant by the uniqueness of the decomposition. One of such $\phi$ is given by: \[
\phi_0=-\Phi_d*(\nabla\cdot\varphi) ,
\]
where $\Phi_d$ is the fundamental solution satisfying $-\Delta \Phi_d=\delta(x)$ in distribution sense. 
By the singular integral theory \cite{stein16}
\[
\|\mathcal{P}\varphi\|_{W^{1,p}}\le \|\varphi\|_{W^{1,p}}+\|\nabla \phi_0\|_{W^{1,p}}
\le \|\varphi\|_{W^{1,p}}+A_d\|\nabla\cdot\varphi\|_{L^p}+\|\nabla\phi_0\|_{L^p}\le C(d)\|\varphi\|_{W^{1,p}}.
\]
Note that $\|\nabla\phi_0\|_{L^p}\le C_1\|\varphi\|_{L^p}$ because
we know already the projection operator is bounded from 
$L^p$ to $L^p$.
\end{proof}

We now show the following time regularity results:
\begin{lmm}\label{lmm:timeregu}
There exists a subsequence of $v^{\e_k}$ without relabeling such that  
\begin{gather}
\partial_t\mathcal{P}v_p^{\e_k}\rightharpoonup \partial_t\mathcal{P}v_p~\text{weakly in }L^q(0,T, W^{-1,q}(\mathbb{R}^d)).
\end{gather}

Further, $\partial_t\mathcal{P}v_{p}+\nabla\cdot(v\otimes v_p)=-\nu\theta$ in $V'$ where $\theta$ is given in Lemma \ref{lmm:boundedofpLap}. In particular, $\forall w\in V$, $\phi\in C_c^{\infty}[0, T)$,
\begin{gather}\label{eq:secondtime}
\langle  \partial_t\mathcal{P}v_p, \phi w \rangle-\int_0^T\phi(t)\int_{\mathbb{R}^d}
\nabla v\otimes v_p:w\, dx dt
+\nu\int_0^T\phi\int_{\mathbb{R}^d}
\nabla \chi:w \,dxdt=0.
\end{gather}
\end{lmm}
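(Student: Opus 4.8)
The plan is to apply the Leray projection $\mathcal{P}$ of Lemma \ref{lmm:hw} to the regularized system \eqref{eq:pNSReg}. On $\mathbb{R}^d$, $\mathcal{P}$ is bounded on $L^p$ and, by Lemma \ref{lmm:controlproj} together with duality, also on $W^{-1,q}(\mathbb{R}^d)$. Since $\mathcal{P}\nabla\pi^\e=0$ and $\nabla\cdot(J_\e*v^\e)=0$ by Lemma \ref{lmm:mollify}, writing the transport term in conservation form gives the distributional identity on $\mathbb{R}^d\times(0,T)$
\begin{gather*}
\partial_t\mathcal{P}v_p^\e=-\mathcal{P}\,\nabla\cdot\big((J_\e*v^\e)\otimes v_p^\e\big)+\nu\,\mathcal{P}\Delta_p v^\e+\epsilon\,\mathcal{P}\Delta v_p^\e,
\end{gather*}
in which $\mathcal{P}$ commutes with $\partial_t$. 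The strategy is to bound the three terms on the right uniformly in $\epsilon$ in $L^q(0,T;W^{-1,q}(\mathbb{R}^d))$, extract a weak limit, and identify each term.

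First I would establish the uniform bounds. For the $p$-Laplacian term, $\|\Delta_p v^\e\|_{W^{-1,q}}\le\||\nabla v^\e|^{p-2}\nabla v^\e\|_{L^q}$ and $\int_0^T\||\nabla v^\e|^{p-2}\nabla v^\e\|_{L^q}^q\,dt=\int_0^T\|\nabla v^\e\|_{L^p}^p\,dt$, which is bounded by Proposition \ref{pro:apriori}. For the regularization term, $\|\epsilon\Delta v_p^\e\|_{W^{-1,q}}\le\epsilon\|\nabla v_p^\e\|_{L^q}$, which is $O(\epsilon)$ in $L^q(0,T;L^q)$, again by Proposition \ref{pro:apriori}. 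For the transport flux, Gagliardo--Nirenberg together with $p\ge d$ gives $v^\e\in L^{2p}(0,T;L^{2p}(\mathbb{R}^d))$ uniformly in $\epsilon$, while $v_p^\e=|v^\e|^{p-2}v^\e\in L^\infty(0,T;L^q(\mathbb{R}^d))$; since mollification is a contraction on $L^{2p}$, Hölder's inequality bounds $(J_\e*v^\e)\otimes v_p^\e$ in $L^q(0,T;L^q(\mathbb{R}^d))$ uniformly in $\epsilon$ (the exponents close precisely because $p\ge2$). Applying the $W^{-1,q}$-boundedness of $\mathcal{P}$ to each term shows $\{\partial_t\mathcal{P}v_p^\e\}$ is bounded in $L^q(0,T;W^{-1,q}(\mathbb{R}^d))$, which is reflexive, so along a further subsequence (not relabeled) $\partial_t\mathcal{P}v_p^{\e_k}\rightharpoonup\xi$ in that space.

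Next I would identify all limits. Testing against $\varphi\in C_c^\infty(\mathbb{R}^d\times[0,T))$ and using that $v_p^{\e_k}\to v_p$ in $L^q(0,T;L^q_{loc})$ and $v_p^{\e_k}(\cdot,0)\to v_p(\cdot,0)$ in $L^q$ (Proposition \ref{pro:subsequenceconv}), together with the continuity of $\mathcal{P}$, one gets $\mathcal{P}v_p^{\e_k}\to\mathcal{P}v_p$ in the sense of distributions with converging initial trace, so $\xi=\partial_t\mathcal{P}v_p$ in the sense of Definition \ref{def:weakderi}. On the right-hand side: the $p$-Laplacian term converges to $\nu\mathcal{P}\theta$ by Lemma \ref{lmm:boundedofpLap}, where $\langle\theta,u\rangle=-\int_0^T\int_{\mathbb{R}^d}\chi:\nabla u$; the regularization term tends to $0$ by the $O(\epsilon)$ bound; and the transport term converges to $\mathcal{P}\nabla\cdot(v\otimes v_p)$, because the strong convergences $J_{\e_k}*v^{\e_k}\to v$ in $L^p(0,T;L^p_{loc})$ and $v_p^{\e_k}\to v_p$ in $L^q(0,T;L^q_{loc})$ from Proposition \ref{pro:subsequenceconv}, interpolated against the uniform $L^{2p}$- and $L^\infty(L^q)$-bounds, upgrade to strong convergence of the two factors in $L^{r_1}_{loc}$ and $L^{r_2}_{loc}$ with $1/r_1+1/r_2=1/q$, so $(J_{\e_k}*v^{\e_k})\otimes v_p^{\e_k}\to v\otimes v_p$ in $L^q(0,T;L^q_{loc})$. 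Passing to the limit, and noting that $\mathcal{P}w=w$ for $w\in V$ so that the projections on the right may be dropped when pairing against test fields in $V$, yields $\partial_t\mathcal{P}v_p+\nabla\cdot(v\otimes v_p)=-\nu\theta$ in $V'$; pairing with $\phi w$ for $\phi\in C_c^\infty[0,T)$, $w\in V$, and integrating the spatial derivatives by parts gives \eqref{eq:secondtime}.

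The main obstacle is the passage to the limit in the transport nonlinearity: the compactness coming from the variant Aubin--Lions lemma (Lemma \ref{lmm:aubinlions}) is only local in space and only in $L^p$ (resp.\ $L^q$), which by itself does not control the quadratic product $(J_{\e_k}*v^{\e_k})\otimes v_p^{\e_k}$ in $L^q$; it is precisely the extra integrability $v^\e\in L^{2p}(0,T;L^{2p})$ --- available thanks to the standing assumption $p\ge d$ --- that lets one interpolate up and close the argument, with $p=2$ being the borderline case. A secondary technical point is that $\mathcal{P}$ is nonlocal, so the identification $\mathcal{P}v_p^{\e_k}\to\mathcal{P}v_p$ (and of its initial trace) has to be carried out in the sense of distributions rather than by composing $\mathcal{P}$ with the merely local strong convergence of $v_p^{\e_k}$.
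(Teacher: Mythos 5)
Your proposal is correct and follows essentially the same route as the paper: kill the pressure with the Leray projection (your duality argument for $\mathcal{P}$ on $W^{-1,q}$ is exactly the paper's device of moving $\mathcal{P}$ onto the test function via Lemma \ref{lmm:controlproj}), bound the transport flux uniformly in $L^q(0,T;W^{-1,q}(\mathbb{R}^d))$ using the Gagliardo--Nirenberg $L^{2p}$ estimate available since $p\ge d\ge 2$, use Lemma \ref{lmm:boundedofpLap} and the $O(\epsilon)$ bound for the diffusion terms, extract a weak limit, and identify it as $\partial_t\mathcal{P}v_p$ in the sense of Definition \ref{def:weakderi} before passing to the limit in the nonlinear terms via Proposition \ref{pro:subsequenceconv}. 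The only cosmetic difference is that the paper bounds the trilinear term directly with Young's inequality (exponents $p$, $2p$, $2p/(2p-3)$) rather than bounding the flux $(J_{\e}*v^{\e})\otimes v_p^{\e}$ itself in $L^q$, which is an equivalent computation resting on the same estimates.
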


\begin{proof}
We pick $\tilde{\varphi}\in L^p(0, T; W^{1,p}(\mathbb{R}^d))$ with $\|\tilde{\varphi}\|_{L^p(0,T; W^{1,p})}\le 1$, and denote $\varphi=\mathcal{P}\tilde{\varphi}\in V$. Hence, by Lemma \ref{lmm:controlproj},
\[
\|\varphi\|_{L^p(0,T; W^{1,p})}\le C(d).
\]
Since $\langle \partial_t\mathcal{P}v_p^{\e_k}, \tilde{\varphi} \rangle=\langle  \partial_tv_p^{\e_k}, \varphi \rangle$. We estimate
\begin{gather*}
\int_0^T\int_{\mathbb{R}^d}\nabla \varphi:(J_{\e_k}*v^{\e_k})\otimes v_p^{\e_k} dx dt
\le \int_0^T\int_{\mathbb{R}^d} \frac{|\nabla \varphi|^p}{p}
+\frac{|J*v^{\e_k}|^{2p}}{2p}+\frac{2p-3}{2p}|v_p^{\e_k}|^{\frac{2p}{2p-3}}dxdt.
\end{gather*}
The first term is trivially bounded and the second term is bounded as we did in the proof of Lemma \ref{lmm:timeestimate}. For the third term, we apply Gagliardo-Nirenberg inequality and have
\[
\|v_p^{\e_k}\|_{\frac{2p-3}{2p}}^{2p/(2p-3)}
\le C\|\nabla v_p^{\e_k}\|_q^{d/(2p-3)}\|v_p^{\e_k}\|_q^{(2p-d)/(2p-3)}.
\]
Since $\frac{d}{2p-3}\le q=\frac{p}{p-1}$ for $p\ge d\ge 2$,
then the term is bounded.

Therefore, $\partial_t\mathcal{P}v_p^{\e_k}=\nu\Delta_p v^{\e_k}+\e_k\Delta v_p^{\e_k}-J_{\e_k}*v^{\e_k}\cdot\nabla v_p^{\e_k}$ is bounded in $L^q(0,T; W^{-1,q})$ by Lemma \ref{lmm:boundedofpLap}.  Hence, there exists $\beta\in L^q(0,T; W^{-1,q})$, such that 
\begin{gather}
\partial_t\mathcal{P}v_p^{\e_k}\rightharpoonup \beta~\text{weakly in }L^q(0,T; W^{-1,q}(\mathbb{R}^d)).
\end{gather}
If we take $\varphi\in C_c^{\infty}(\mathbb{R}^d\times [0,T))$, we have
\begin{gather*}
\langle  \partial_t\mathcal{P}v_p^{\e_k}, \varphi \rangle=-\langle \mathcal{P}v_p^{\e_k}, \partial_t\varphi \rangle
-\int_{\mathbb{R}^d}\mathcal{P}v_p^{\e_k}(x, 0) \varphi(x, 0) dx \to 
-\langle \mathcal{P}v_p, \partial_t\varphi  \rangle
-\int_{\mathbb{R}^d}\mathcal{P}v_p(x, 0) \varphi(x, 0)\, dx.
\end{gather*}
Hence $\partial_t\mathcal{P}v_p=\beta\in L^q(0,T; W^{-1,q}(\mathbb{R}^d))$ and $\partial_t\mathcal{P}v_{p}+\nabla\cdot(v\otimes v_p)=-\nu\theta$ in $V'$ by Lemma \ref{lmm:boundedofpLap}.
\end{proof}
\begin{rmk}
If one has a uniform estimate of $\nabla\pi^{\e_k}$ in $(L^p(0, T; W^{1,p}))'=L^q(0,T; W^{-1,q})$, one can have the weak convergence of $\partial_tv_p^{\e_k}$ to $\partial_t v_p$ in $(L^p(0, T; W^{1,p}))'=L^q(0,T; W^{-1,q})$.
\end{rmk}

\begin{lmm}\label{lmm:importantlmm}
It holds that $\forall \phi\in C_c^{\infty}[0, T)$,
\begin{gather}\label{eq:weakidendity}
\lim_{k\to\infty}\int_0^T\phi(t)\int_{\mathbb{R}^d}|\nabla v^{\e_k}|^pdx dt=-\langle v, \phi \theta\rangle=
\int_0^T\phi(t)\int_{\mathbb{R}^d}\nabla v:\chi \,dx dt.
\end{gather}
\end{lmm}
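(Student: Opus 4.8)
The plan is to pass to the limit in the energy identity for the regularized problem and then to identify the limit of the dissipation using the limiting momentum equation tested against $v$ itself; the equality with $-\langle v,\phi\theta\rangle$ is then just the definition of $\theta$. Indeed, by Lemma~\ref{lmm:boundedofpLap} one has $\langle\theta,u\rangle=-\int_0^T\int_{\mathbb{R}^d}\chi:\nabla u\,dx\,dt$ for every $u\in L^p(0,T;W^{1,p}(\mathbb{R}^d))$, so inserting $u=\phi v$ (admissible since $v\in V$ and $\phi\in C_c^\infty[0,T)$) and using $\chi:\nabla v=\nabla v:\chi$ gives $-\langle v,\phi\theta\rangle=\int_0^T\phi\int_{\mathbb{R}^d}\nabla v:\chi\,dx\,dt$. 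Hence the content of the lemma is the first equality.

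For that I would proceed in three steps. Step~1: rewrite the regularized balance \eqref{eq:pnorm} as $\nu\int_{\mathbb{R}^d}|\nabla v^{\e_k}|^p\,dx=-\tfrac{d}{dt}H^{\e_k}-\e_k\int_{\mathbb{R}^d}\nabla v^{\e_k}:\nabla v_p^{\e_k}\,dx$ (the last integrand nonnegative, as in the proof of Proposition~\ref{pro:apriori}), multiply by $\phi$ and integrate on $(0,T)$; by H\"older's inequality and the uniform bounds \eqref{eq:uniformest} the $\e_k$-term is $O(\e_k)$, while $-\int_0^T\phi\,\tfrac{d}{dt}H^{\e_k}\,dt=\langle-\tfrac{d}{dt}H^{\e_k},\phi\rangle\to\langle-\tfrac{d}{dt}H,\phi\rangle$ by Lemma~\ref{lmm:reguH}, so $\nu\lim_k\int_0^T\phi\int_{\mathbb{R}^d}|\nabla v^{\e_k}|^p\,dx\,dt=\langle-\tfrac{d}{dt}H,\phi\rangle$. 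Step~2: test the limiting momentum equation of Lemma~\ref{lmm:timeregu} (equivalently \eqref{eq:secondtime}) against $w=v\in V$; the convective contribution vanishes because $v$ is divergence free (its integrand $(v\otimes v_p):\nabla v$ is a constant multiple of $v\cdot\nabla|v|^p$, which integrates to zero), leaving $\langle\partial_t\mathcal{P}v_p,\phi v\rangle=\nu\langle v,\phi\theta\rangle$. Step~3: identify this time derivative: since $v$ is divergence free $\langle\partial_t\mathcal{P}v_p,\phi v\rangle=\langle\partial_t v_p,\phi v\rangle$, and the chain rule $\int_{\mathbb{R}^d}v\cdot\partial_t v_p\,dx=\int_{\mathbb{R}^d}|v_p|^{q-2}v_p\cdot\partial_t v_p\,dx=\tfrac{d}{dt}\int_{\mathbb{R}^d}\tfrac1q|v_p|^q\,dx=\tfrac{d}{dt}H$ gives $\langle\partial_t\mathcal{P}v_p,\phi v\rangle=\langle\tfrac{d}{dt}H,\phi\rangle$. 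Combining, $\nu\lim_k\int_0^T\phi\int|\nabla v^{\e_k}|^p\,dx\,dt=\langle-\tfrac{d}{dt}H,\phi\rangle=-\langle\partial_t\mathcal{P}v_p,\phi v\rangle=-\nu\langle v,\phi\theta\rangle$, which is the first equality.

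The delicate point is Step~3: $\tfrac{d}{dt}H$ is only a Radon measure (Lemma~\ref{lmm:reguH}), $\partial_t\mathcal{P}v_p$ lies in $L^q(0,T;W^{-1,q})$ with $q$ possibly below $2$, and $v$ only in $L^p(0,T;W^{1,p})$, so the identity $\langle\partial_t\mathcal{P}v_p,\phi v\rangle=\langle\tfrac{d}{dt}H,\phi\rangle$ is not a direct consequence of the Lions--Temam chain rule for a Hilbert Gelfand triple. As in \cite{mm12,cl16}, the remedy is not to differentiate the limit but to carry out the three-step computation at the level of the smooth strong approximations $v^{\e_k}$, where every manipulation is legitimate, and then to pass to the limit term by term using the convergences of Proposition~\ref{pro:subsequenceconv} and Lemmas~\ref{lmm:boundedofpLap}--\ref{lmm:reguH}, keeping track of the initial-data terms in Definition~\ref{def:weakderi}, which match because $v^{\e_k}(\cdot,0)=J_{\e_k}*v_0\to v_0$ in $L^p(\mathbb{R}^d)$ and $v_p^{\e_k}(\cdot,0)\to v_p(\cdot,0)$ in $L^q(\mathbb{R}^d)$.
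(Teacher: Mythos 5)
Your overall architecture coincides with the paper's: your Step~1 is exactly \eqref{eq:aux2}, your Step~2 is \eqref{eq:aux1} (where, as you should note, the vanishing of $\int_0^T\phi\int_{\mathbb{R}^d}v\cdot\nabla v\cdot v_p\,dx\,dt$ is justified in the paper by mollifying $v$, using the integrability of $v|v|^p$ -- which needs $p\ge d$ via Gagliardo--Nirenberg -- and then cutting off in space), and the identity $-\langle v,\phi\theta\rangle=\int_0^T\phi\int_{\mathbb{R}^d}\nabla v:\chi\,dx\,dt$ is indeed just Lemma~\ref{lmm:boundedofpLap}. So the whole lemma hinges on your Step~3, the chain rule $\langle\partial_t\mathcal{P}v_p,\phi v\rangle=\langle\tfrac{d}{dt}H,\phi\rangle$, which is the paper's \eqref{eq:aux3}.

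Here is the genuine gap: your proposed remedy for Step~3 -- ``carry out the three-step computation at the level of the strong solutions $v^{\e_k}$ and pass to the limit term by term'' -- does not work. At the $\e_k$ level the three steps combine into a tautology (they merely reproduce the regularized energy identity \eqref{eq:pnorm}), so the only new information would have to come from identifying $\lim_k\langle\partial_t\mathcal{P}v_p^{\e_k},\phi v^{\e_k}\rangle$ with $\langle\partial_t\mathcal{P}v_p,\phi v\rangle$. But that is a pairing of two sequences each converging only weakly in the relevant topologies: $\partial_t\mathcal{P}v_p^{\e_k}\rightharpoonup\partial_t\mathcal{P}v_p$ in $L^q(0,T;W^{-1,q}(\mathbb{R}^d))$ (Lemma~\ref{lmm:timeregu}), while $v^{\e_k}\to v$ strongly only in $L^p(0,T;L^p_{loc}(\mathbb{R}^d))$ and the gradients $\nabla v^{\e_k}$ converge only weakly, so a term-by-term passage in a $W^{-1,q}$--$W^{1,p}$ duality is not available; equivalently, by the regularized equation this pairing equals $-\nu\int_0^T\phi\int_{\mathbb{R}^d}|\nabla v^{\e_k}|^p\,dx\,dt+O(\e_k)$, i.e.\ its limit is precisely the unknown quantity the lemma is about, so the argument is circular. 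The paper's actual fix is different: it approximates the \emph{limit} $v$ (not the $v^{\e_k}$) by mollifications $\varphi_n\to v$ strongly in $V$ with $\varphi_n(\cdot,0)\to v_0$ in $L^p(\mathbb{R}^d)$ and $\partial_t\eta_n$ bounded in $V'$, where $\eta_n=|\varphi_n|^{p-2}\varphi_n$; for these smooth approximants the chain rule is exact, giving $\int_0^T\phi\int_{\mathbb{R}^d}\varphi_n\cdot\partial_t\mathcal{P}\eta_n\,dx\,dt=-\int_0^T\phi'\,\tfrac1q\int_{\mathbb{R}^d}|\varphi_n|^p\,dx\,dt-\phi(0)\tfrac1q\int_{\mathbb{R}^d}|\varphi_n(\cdot,0)|^p\,dx$, and one then passes to the limit in $n$ using the \emph{strong} convergence $\varphi_n\to v$ in $V$ against the weak-star convergence $\partial_t\mathcal{P}\eta_n\rightharpoonup\partial_t\mathcal{P}v_p$ in $V'$ (a strong--weak pairing, which does converge). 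Without this, or an equivalent argument, your Step~3 and hence the first equality in \eqref{eq:weakidendity} remain unproven.
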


\begin{proof}
In Equation \eqref{eq:secondtime}, we set $w=v$ and get \[
\langle \partial_t\mathcal{P}v_p, \phi v \rangle
-\int_0^T\phi(t)\int_{\mathbb{R}^d}v\cdot\nabla v\cdot v_p dx dt
+\nu\int_0^T\phi(t)\int_{\mathbb{R}^d}\nabla v:\chi \,dx dt=0.
\]

Since $v\in L^{\infty}(0, T; L^p)\cap L^p(0,T; W^{1,p})$ and $v_p\in L^{\infty}(0, T; L^q)\cap L^q(0,T; W^{1,q})$, we find $\forall \varphi\in C_c^{\infty}(\mathbb{R}^d\times[0, T))$ and $\forall \delta>0$,
\[
\left\langle  \nabla\cdot(v*J_{\delta} \frac{1}{p}|v*J_{\delta}|^p), \varphi \right\rangle
=\left\langle v*J_{\delta}\cdot\nabla (v*J_{\delta})\cdot (v*J_{\delta})_p, \varphi \right\rangle .
\]
Taking $\delta\to 0$, we find that, \[
\left\langle \frac{1}{p}\nabla\cdot(v |v|^p), \varphi \right\rangle=
\left\langle v\cdot\nabla v\cdot v_p, \varphi \right\rangle.
\]
Note that $v\cdot\nabla v\cdot v_p$ and $v|v|^p$ are integrable on $\mathbb{R}^d\times [0, T)$ by similar argument in Lemma \ref{lmm:timeregu}.
Take the test function $\varphi=\phi(t)\zeta_n(x)$ where $\zeta_n$ is $1$ in $\Omega_n$. Sending $n\to\infty$, we find that $\int_0^T\phi(t)\int_{\mathbb{R}^d}v\cdot\nabla v\cdot v_p dxdt=0$. Hence, 
\begin{gather}\label{eq:aux1}
\langle \partial_t\mathcal{P}v_p, \phi v \rangle+\nu\int_0^T\phi(t)\int_{\mathbb{R}^d}\nabla v:\chi \,dx dt=0.
\end{gather}

In Equation \eqref{eq:pNSReg}, we multiply $\phi(t)v^{\e_k}$ and integrate, 
\begin{gather*}
\left\langle \frac{d}{dt}H^{\e_k}, \phi \right\rangle=-\int_0^T\phi \nu \int_{\mathbb{R}^d}|\nabla v^{\e_k}|^pdx-\epsilon\int_0^T\phi(t)\int_{\mathbb{R}^d} \nabla v^{\e_k}:\nabla v_p^{\e_k} dx .
\end{gather*}
The transport term vanishes since $\int_{\mathbb{R}^d}\nabla v^{\e_k}:v^{\e_k}v_p^{\e_k} dx=0$.

Letting $k\to\infty$, by Lemma \ref{lmm:reguH}, we have 
\begin{gather}\label{eq:aux2}
\left\langle \frac{d}{dt}H, \phi \right\rangle=-\nu \lim_{k\to\infty}\int_0^T\phi(t)\int_{\mathbb{R}^d}|\nabla v^{\e_k}|^pdx dt.
\end{gather}

Note $v\in V\cap L^{\infty}(0, T; L^p(\mathbb{R}^d))$ with $\partial_t\mathcal{P}|v|^{p-2}v\in L^q(0,T; W^{-1,q}(\mathbb{R}^d))$. We can pick $\varphi_n \to v$ strongly in $V\subset L^p(0,T; W^{1,p}(\mathbb{R}^d))$, $\varphi_n(x, 0)\to v_0(x)$ in $L^p(\mathbb{R}^d)$ such that $\partial_t|\varphi_n|^{p-2}\varphi_n$ are bounded in $V'$, which is achievable for example by mollification of $v$.   Let $\eta_n=|\varphi_n|^{p-2}\varphi_n$. It follows then
\begin{multline*}
\int_0^T \phi  \int_{\mathbb{R}^d} \varphi_n\cdot \partial_t\mathcal{P}\eta_n \,dx dt 
=-\int_0^T\phi' \frac{1}{q}\int_{\mathbb{R}^d} |\varphi_n|^p dx dt-\phi(0)\int_{\mathbb{R}^d}\frac{1}{q}|\varphi_n(x, 0)|^p dx \\
\to -\int_0^TH\phi'  dt-\phi(0)\frac{1}{q}\int_{\mathbb{R}^d} |v_0(x)|^p dx=\left\langle \frac{d}{dt}H, \phi \right\rangle.
\end{multline*}
Secondly, $\forall \varphi\in C_c^{\infty}(\mathbb{R}^d\times [0, T))\cap V$ which is dense in $V$ under the norm of $L^p(0,T; W^{1,p}(\mathbb{R}^d))$, 
\begin{multline*}
\langle \partial_t\mathcal{P}\eta_n, \varphi \rangle=-\langle  \mathcal{P}\eta_n, \varphi_t \rangle
-\int_{\mathbb{R}^d} \varphi(x,0)\eta_n(x,0)dx\to \\
-\langle  v_p, \varphi_t \rangle
-\int_{\mathbb{R}^d} \varphi(x,0)v_p(x,0)dx=\langle \partial_tv_p,  \varphi \rangle=\langle \partial_t\mathcal{P}v_p, \varphi \rangle.
\end{multline*}
Hence, $\partial_t\eta_n$ converges weak star to $\partial_t\mathcal{P}v_p$ in $V'$. Hence, $\int_0^T\phi\int_{\mathbb{R}^d}\varphi_n\cdot\partial_t\mathcal{P}\eta_n dxdt\to \int_0^T\phi\int_{\mathbb{R}^d}v\cdot\partial_t\mathcal{P}v_p dxdt$, and we have the chain rule:
\begin{gather}\label{eq:aux3}
\left\langle \frac{d}{dt}H, \phi \right\rangle_{[0, T]}=\langle  \partial_t\mathcal{P}v_p,  \phi v  \rangle.
\end{gather}
As a result, by \eqref{eq:aux1}, \eqref{eq:aux2} and \eqref{eq:aux3}, \eqref{eq:weakidendity} is shown.

\end{proof}

We will use the following Minty-Browder argument (\cite[Sec. 2.1]{barbu10}, \cite[Appendix]{mm12}):
\begin{lmm}\label{lmm:weakmono}
Suppose $X$ is a real reflexive Banach space. Let $G: X\mapsto X'$ be a nonlinear, bounded monotone operator such that $\forall v_1,v_2\in X$ the mapping $\lambda\mapsto \langle G(v_1+\lambda v_2), v_2 \rangle$ is continuous. If $w_n\rightharpoonup w$ in $X$ and $Gw_n\rightharpoonup \beta$ in $X'$ and \[
\limsup_{n\to\infty}\langle G w_n, w_n \rangle \le \langle  \beta, w \rangle,
\]
then $Gw=\beta$.
\end{lmm}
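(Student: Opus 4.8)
The plan is to run the classical Minty--Browder monotonicity trick: exploit monotonicity to obtain a one-sided inequality after testing $Gw_n$ against a fixed element, pass to the limit using the two weak convergences together with the $\limsup$ hypothesis, and then use hemicontinuity along a ray through $w$ to strip off the nonlinearity.

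\emph{Step 1: the limiting monotonicity inequality.} Fix an arbitrary $z\in X$. Monotonicity of $G$ gives $\langle Gw_n-Gz,\,w_n-z\rangle\ge 0$, which rearranges to
\[
\langle Gw_n,w_n\rangle \ge \langle Gw_n,z\rangle + \langle Gz,w_n\rangle - \langle Gz,z\rangle .
\]
Since $Gw_n\rightharpoonup\beta$ in $X'$ and $z$ is fixed, $\langle Gw_n,z\rangle\to\langle\beta,z\rangle$; since $w_n\rightharpoonup w$ in $X$ and $Gz$ is fixed, $\langle Gz,w_n\rangle\to\langle Gz,w\rangle$. Taking $\limsup$ as $n\to\infty$ on both sides and using the hypothesis $\limsup_n\langle Gw_n,w_n\rangle\le\langle\beta,w\rangle$, I get
\[
\langle\beta,w\rangle \ge \langle\beta,z\rangle + \langle Gz,w\rangle - \langle Gz,z\rangle ,
\]
that is, $\langle \beta-Gz,\,w-z\rangle\ge 0$ for every $z\in X$.

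\emph{Step 2: hemicontinuity along a ray.} Given $u\in X$ and $t>0$, apply the inequality of Step 1 with $z=w+tu$; since $w-z=-tu$ this reads $-t\,\langle\beta-G(w+tu),u\rangle\ge 0$, hence $\langle\beta-G(w+tu),u\rangle\le 0$. Letting $t\to 0^+$ and invoking the assumed continuity of $\lambda\mapsto\langle G(w+\lambda u),u\rangle$ (the hemicontinuity hypothesis with $v_1=w$, $v_2=u$), I conclude $\langle\beta-Gw,u\rangle\le 0$. Replacing $u$ by $-u$ gives the reverse inequality, so $\langle\beta-Gw,u\rangle=0$ for all $u\in X$, i.e. $\beta-Gw=0$ in $X'$ and $Gw=\beta$.

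\emph{On the difficulty.} There is no genuine obstacle here; the argument is elementary. The only points requiring care are: in Step 1, keeping $z$ fixed so that the two passages to the limit are legitimate weak-convergence statements (one duality pairing taken in $X'$, one in $X$) and distributing the $\limsup$ correctly over the sum; and in Step 2, the bookkeeping of inequality signs for positive ray parameter $t$, together with the observation that the hemicontinuity assumption is precisely what guarantees $\langle G(w+tu),u\rangle\to\langle Gw,u\rangle$ as $t\to0^+$. Note that reflexivity of $X$ and boundedness of $G$ play no role in this particular deduction; they are what is used, via Banach--Alaoglu and weak compactness, to produce in the applications the subsequences along which $w_n\rightharpoonup w$ and $Gw_n\rightharpoonup\beta$ in the first place.
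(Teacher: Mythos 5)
Your proof is correct and is exactly the classical Minty--Browder argument: the monotonicity inequality $\langle Gw_n-Gz,\,w_n-z\rangle\ge 0$ against a fixed $z$, passage to the limit using $Gw_n\rightharpoonup\beta$ in $X'$, $w_n\rightharpoonup w$ in $X$ and the $\limsup$ hypothesis to get $\langle\beta-Gz,\,w-z\rangle\ge 0$ for all $z$, and then hemicontinuity along the ray $z=w+tu$, $t\to 0^+$, to conclude $Gw=\beta$. The paper does not prove the lemma itself but defers to the cited references (Barbu's monograph and the appendix of the doubly degenerate diffusion paper), where this same argument is given; your closing observation that reflexivity and boundedness of $G$ are not needed for this deduction, only to produce the weakly convergent (sub)sequences in the applications, is also accurate.
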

Using Lemma \ref{lmm:boundedofpLap}, Lemma \ref{lmm:importantlmm}-Lemma \ref{lmm:weakmono}, we are now able to conclude:
\begin{pro}\label{pro:chi}
With the settings and notations in Proposition  \ref{pro:subsequenceconv}, we have $\forall \varphi\in C_c^{\infty}(\mathbb{R}^d\times [0, T); \mathbb{R}^d)$ that
\[
\int_0^T\int_{\mathbb{R}^d}\chi: \nabla\varphi \, dxdt=\int_0^T\int_{\mathbb{R}^d}|\nabla v|^{p-2}\nabla v: \nabla\varphi \, dx dt.
\]
\end{pro}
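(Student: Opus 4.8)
The plan is to run the classical Minty--Browder monotonicity argument, with the energy-convergence identity of Lemma \ref{lmm:importantlmm} supplying the crucial one-sided bound. Fix $\phi\in C_c^{\infty}[0,T)$ with $\phi\ge 0$ and an arbitrary $w\in L^p(0,T;W^{1,p}(\mathbb{R}^d))$. By the pointwise monotonicity in Lemma \ref{lmm:basicineq},
\[
\int_0^T \phi(t)\int_{\mathbb{R}^d}\bigl(|\nabla v^{\e_k}|^{p-2}\nabla v^{\e_k}-|\nabla w|^{p-2}\nabla w\bigr):\bigl(\nabla v^{\e_k}-\nabla w\bigr)\,dx\,dt\ge 0 .
\]
I would expand the product into four terms and pass to the limit $k\to\infty$ using Proposition \ref{pro:subsequenceconv}: the term $\int\phi\,|\nabla v^{\e_k}|^p$ converges to $\int_0^T\phi\int_{\mathbb{R}^d}\nabla v:\chi$ by Lemma \ref{lmm:importantlmm}; the cross term $\int\phi\,|\nabla v^{\e_k}|^{p-2}\nabla v^{\e_k}:\nabla w$ converges to $\int\phi\,\chi:\nabla w$ since $|\nabla v^{\e_k}|^{p-2}\nabla v^{\e_k}\rightharpoonup\chi$ in $L^q$ and $\phi\nabla w\in L^p$; the cross term $\int\phi\,|\nabla w|^{p-2}\nabla w:\nabla v^{\e_k}$ converges to $\int\phi\,|\nabla w|^{p-2}\nabla w:\nabla v$ since $\nabla v^{\e_k}\rightharpoonup\nabla v$ in $L^p$ and $\phi|\nabla w|^{p-2}\nabla w\in L^q$; and the last term does not depend on $k$. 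This yields
\[
\int_0^T \phi(t)\int_{\mathbb{R}^d}\bigl(\chi-|\nabla w|^{p-2}\nabla w\bigr):\bigl(\nabla v-\nabla w\bigr)\,dx\,dt\ge 0,\qquad\forall\, w\in L^p(0,T;W^{1,p}(\mathbb{R}^d)).
\]

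Next comes the Minty step. For $u\in L^p(0,T;W^{1,p}(\mathbb{R}^d))$ and $\lambda\in(0,1]$ I take $w=v-\lambda u$, so that $\nabla v-\nabla w=\lambda\nabla u$, and dividing the inequality by $\lambda>0$ gives
\[
\int_0^T\phi(t)\int_{\mathbb{R}^d}\bigl(\chi-|\nabla(v-\lambda u)|^{p-2}\nabla(v-\lambda u)\bigr):\nabla u\,dx\,dt\ge 0 .
\]
Letting $\lambda\to 0^+$, the second integrand converges pointwise a.e.\ by continuity of $\eta\mapsto|\eta|^{p-2}\eta$ and is dominated by $\phi\,(|\nabla v|+|\nabla u|)^{p-1}|\nabla u|$, which lies in $L^1(\mathbb{R}^d\times(0,T))$ by H\"older (since $(p-1)q=p$); so by dominated convergence --- equivalently by the hemicontinuity asserted in Lemma \ref{lmm:boundedofpLap} --- one obtains $\int_0^T\phi\int_{\mathbb{R}^d}(\chi-|\nabla v|^{p-2}\nabla v):\nabla u\ge 0$. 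Replacing $u$ by $-u$ gives the reverse inequality, hence
\[
\int_0^T\phi(t)\int_{\mathbb{R}^d}\bigl(\chi-|\nabla v|^{p-2}\nabla v\bigr):\nabla u\,dx\,dt=0,\qquad\forall\,\phi\in C_c^{\infty}[0,T),\ \phi\ge 0,\ \forall\, u\in L^p(0,T;W^{1,p}(\mathbb{R}^d)).
\]
Finally, given $\varphi\in C_c^{\infty}(\mathbb{R}^d\times[0,T);\mathbb{R}^d)$, I choose $u=\varphi$ and $\phi\in C_c^{\infty}[0,T)$ with $0\le\phi\le 1$ and $\phi\equiv 1$ on the compact time-projection of $\operatorname{supp}\varphi$; then $\phi\nabla\varphi=\nabla\varphi$ and the displayed identity is exactly the assertion of the proposition.

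The only genuinely nontrivial input is the energy identity of Lemma \ref{lmm:importantlmm} (already established), which forces $\lim_k\int\phi|\nabla v^{\e_k}|^p$ to equal $\int\phi\,\nabla v:\chi$ rather than merely bounding it from one side; everything else is the standard monotone-operator packaging of Lemma \ref{lmm:weakmono}. The one point to watch is that the argument must be carried throughout with the nonnegative weight $\phi$, which is needed to preserve the sign of the monotonicity inequality on the unbounded domain $\mathbb{R}^d$; since $\phi$ is bounded it disturbs none of the weak-convergence passages nor the hemicontinuity in $\lambda$, and it is removed harmlessly at the end. No Helmholtz--Weyl decomposition of the test field is needed here: the inequality already holds against every $w\in L^p(0,T;W^{1,p}(\mathbb{R}^d))$, divergence-free or not.
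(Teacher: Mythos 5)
Your proposal is correct and follows essentially the same route as the paper: a Minty--Browder monotonicity argument whose decisive input is the energy identity of Lemma \ref{lmm:importantlmm}, finished by testing against $\nabla\varphi$ with a cutoff $\phi$ equal to one on the time-support of $\varphi$. The only difference is presentational --- you unwind the abstract criterion of Lemma \ref{lmm:weakmono} (expanding the monotonicity inequality and performing the $w=v-\lambda u$, $\lambda\to 0^+$ hemicontinuity step by dominated convergence) instead of invoking it directly with the operator $\phi(t)\Delta_p$ as the paper does.
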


\begin{proof}
Let $\phi(t)\in C_c^{\infty}[0, T)$ and $\phi(t)\ge 0$.
\[
\phi(t)\Delta_p: L^p(0,T; W^{1,p}(\mathbb{R}^d))\mapsto L^q(0, T; W^{-1,q}(\mathbb{R}^d)),
\]
satisfies all the properties for $\Delta_p$ listed in Lemma \ref{lmm:boundedofpLap}.

By Lemma \ref{lmm:importantlmm} and Lemma \ref{lmm:boundedofpLap},
\[
\lim_{k\to\infty} \langle \phi(t)\Delta_pv^{\e_k},  v^{\e_k} \rangle
=-\int_0^{T}\phi(t)\int_{\mathbb{R}^d}\chi: \nabla v \,dx dt
=\langle  \phi(t)\theta, v \rangle.
\]
By Lemma \ref{lmm:weakmono},
\[
\phi(t)\Delta_p v=\phi(t)\theta,~\textrm{in}~L^q(0,T; W^{-1,q}(\mathbb{R}^d))
\]
Then, it follows that for all $\varphi\in C_c^{\infty}(\mathbb{R}^d\times [0, T); \mathbb{R}^d)$, 
\[
\int_0^T\phi(t)\int_{\mathbb{R}^d}\nabla\varphi :|\nabla v|^{p-2}\nabla v \,dxdt=\int_0^T\phi(t)\int_{\mathbb{R}^d}\chi : \nabla\varphi \,dxdt.
\]
Clearly, for a fixed $\varphi$, we can pick $\phi$ such that 
\begin{gather*}
\int_0^T\phi(t)\int_{\mathbb{R}^d}\nabla\varphi :|\nabla v|^{p-2}\nabla v \,dxdt=\int_0^T\int_{\mathbb{R}^d}\nabla\varphi :|\nabla v|^{p-2}\nabla v \,dxdt,\\
\int_0^T\phi(t)\int_{\mathbb{R}^d}\chi : \nabla\varphi \,dxdt=\int_0^T\int_{\mathbb{R}^d}\chi: \nabla\varphi \,dxdt.
\end{gather*}
\end{proof}

\subsection{Existence of global weak solutions}\label{subsec:existence}

Now, we are able to claim that 
\begin{thm}
Consider $p\ge d\ge 2$ and $\Omega=\mathbb{R}^d$, the $p$-NS equations \eqref{eq:pNS} with $\gamma=p$ and $v_0\in L^p(\mathbb{R}^d)$ have a global weak solution.
\end{thm}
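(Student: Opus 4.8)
\emph{Proof sketch.} The plan is to pass to the limit $\e\to0$ in the regularized system \eqref{eq:pNSReg}, using the uniform estimates and compactness already in hand. Fix $T>0$. By the corollary to Proposition~\ref{pro:apriori}, \eqref{eq:pNSReg} has a global strong solution $v^\e$ for each $\e\in(0,1)$, and Proposition~\ref{pro:subsequenceconv} furnishes a subsequence $\e_k\to0$, a limit $v\in L^\infty(0,T;L^p(\mathbb{R}^d))\cap L^p(0,T;W^{1,p}(\mathbb{R}^d))$, and $\chi\in L^q(0,T;L^q(\mathbb{R}^d))$, with the strong convergences $v^{\e_k}\to v$ and $J_{\e_k}*v^{\e_k}\to v$ in $L^p(0,T;L_{loc}^p(\mathbb{R}^d))$ (and a.e.), $v_p^{\e_k}\to v_p=|v|^{p-2}v$ in $L^q(0,T;L_{loc}^q(\mathbb{R}^d))$, $v_p^{\e_k}(\cdot,0)\to v_p(\cdot,0)$ in $L^q(\mathbb{R}^d)$, and the weak convergences $\nabla v^{\e_k}\rightharpoonup\nabla v$ and $|\nabla v^{\e_k}|^{p-2}\nabla v^{\e_k}\rightharpoonup\chi$ in $L^q(0,T;L^q(\mathbb{R}^d))$; moreover Proposition~\ref{pro:chi} gives $\int_0^T\!\int_{\mathbb{R}^d}\chi:\nabla\varphi=\int_0^T\!\int_{\mathbb{R}^d}|\nabla v|^{p-2}\nabla v:\nabla\varphi$ for every divergence-free $\varphi\in C_c^\infty(\mathbb{R}^d\times[0,T);\mathbb{R}^d)$.

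Next I would fix $\varphi\in C_c^\infty(\mathbb{R}^d\times[0,T);\mathbb{R}^d)$ with $\nabla\cdot\varphi=0$ and $\psi\in C_c^\infty(\mathbb{R}^d\times[0,T);\mathbb{R})$, pair the first equation of \eqref{eq:pNSReg} against $\varphi$, and integrate by parts — the pressure term drops because $\nabla\cdot\varphi=0$, and $J_{\e_k}*v^{\e_k}$ is divergence free by Lemma~\ref{lmm:mollify} — to obtain
\begin{multline*}
\int_0^T\!\!\int_{\mathbb{R}^d}v_p^{\e_k}\cdot\partial_t\varphi\,dxdt+\int_0^T\!\!\int_{\mathbb{R}^d}\nabla\varphi:(J_{\e_k}*v^{\e_k})v_p^{\e_k}\,dxdt-\nu\int_0^T\!\!\int_{\mathbb{R}^d}\nabla\varphi:\nabla v^{\e_k}|\nabla v^{\e_k}|^{p-2}\,dxdt\\
-\e_k\int_0^T\!\!\int_{\mathbb{R}^d}\nabla\varphi:\nabla v_p^{\e_k}\,dxdt+\int_{\mathbb{R}^d}v_p^{\e_k}(x,0)\cdot\varphi(x,0)\,dx=0,
\end{multline*}
together with $\int_0^T\!\int_{\mathbb{R}^d}\nabla\psi\cdot v^{\e_k}\,dxdt=0$. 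Sending $k\to\infty$ term by term: the first and last terms converge by the strong $L^q$ convergence of $v_p^{\e_k}$ and of $v_p^{\e_k}(\cdot,0)$; the $\e_k$-term tends to $0$ because $\nabla v_p^{\e_k}$ is bounded in $L^q$ (Proposition~\ref{pro:apriori}) while $\e_k\to0$; the viscous term tends to $-\nu\int_0^T\!\int_{\mathbb{R}^d}\nabla\varphi:\chi$ by weak convergence, which Proposition~\ref{pro:chi} rewrites as $-\nu\int_0^T\!\int_{\mathbb{R}^d}\nabla\varphi:\nabla v|\nabla v|^{p-2}$; and the divergence constraint passes to the limit since $v^{\e_k}\to v$ in $L_{loc}^p$. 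This gives exactly \eqref{eq:weakform}.

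It then remains to verify the time regularity \eqref{eq:timeregularityrequire}: by Lemma~\ref{lmm:timeestimate}, $\|\tau_h v^{\e_k}-v^{\e_k}\|_{L^p(0,T-h;L^p(\mathbb{R}^d))}^p\le Ch$ with $C$ independent of $k$, and since $v^{\e_k}\to v$ a.e.\ on $\mathbb{R}^d\times[0,T)$, Fatou's lemma yields $\|\tau_h v-v\|_{L^p(0,T-h;L^p(\mathbb{R}^d))}^p\le Ch\to0$ as $h\to0^+$. Hence $v$ is a weak solution on $[0,T)$ in the sense of Definition~\ref{defi:weaksol}. To obtain a \emph{global} weak solution, I would run the construction on $[0,n)$ for each $n\in\mathbb{N}$ and diagonalize — the sequences of subsequences are nested — producing a single $v\in L_{loc}^\infty(0,\infty;L^p(\mathbb{R}^d))\cap L_{loc}^p(0,\infty;W^{1,p}(\mathbb{R}^d))$ that satisfies \eqref{eq:timeregularityrequire} and \eqref{eq:weakform} with $T$ replaced by $\infty$.

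The step I expect to be the main obstacle is the passage to the limit in the convective term $\nabla\varphi:(J_{\e_k}*v^{\e_k})v_p^{\e_k}$, where a product of a strongly convergent factor and an a priori only weakly controlled factor must be shown to converge in $L^1$ on the support of $\varphi$. The way around it is the observation already exploited in Lemma~\ref{lmm:timeregu}: the Gagliardo--Nirenberg bound $\|v^{\e_k}\|_{2p}^{2p}\le C\|\nabla v^{\e_k}\|_p^{d}\|v^{\e_k}\|_p^{2p-d}$, valid precisely because $p\ge d$, makes $v^{\e_k}$ bounded in $L^{2p}(0,T;L^{2p}(\mathbb{R}^d))$, hence $J_{\e_k}*v^{\e_k}$ bounded in $L^{2p}$ and $v_p^{\e_k}$ bounded in $L^{2q}(0,T;L^{2q}(\mathbb{R}^d))$; interpolating these uniform bounds against the strong $L_{loc}^p$ (resp.\ $L_{loc}^q$) convergence upgrades both factors to strong convergence in exponents slightly above $p$ and $q$, so by H\"older their product converges in $L_{loc}^1$ and the term passes to the limit. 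Everything else is a direct consequence of Propositions~\ref{pro:apriori}, \ref{pro:subsequenceconv}, \ref{pro:chi} and Lemma~\ref{lmm:timeestimate}.
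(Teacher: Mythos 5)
Your proposal is correct and follows essentially the same route as the paper: pass to the limit in the regularized weak formulation using Propositions \ref{pro:apriori}, \ref{pro:subsequenceconv} and \ref{pro:chi}, obtain the time regularity \eqref{eq:timeregularityrequire} from Lemma \ref{lmm:timeestimate} via Fatou, and diagonalize over $T=n$ to get a global solution. The only divergence is your worry about the convective term: since Proposition \ref{pro:subsequenceconv} already gives \emph{strong} convergence of $J_{\e_k}*v^{\e_k}$ in $L^p(0,T;L^p_{loc})$ and of $v_p^{\e_k}$ in $L^q(0,T;L^q_{loc})$ with $1/p+1/q=1$, the product converges in $L^1_{loc}$ directly by H\"older, so your Gagliardo--Nirenberg interpolation step is valid but unnecessary.
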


\begin{proof}

We first fix $T>0$ and show that the $p$-NS equations \eqref{eq:pNS} have a weak solution on $[0, T)$. 

Pick $\varphi$ and $\psi$ satisfying the conditions in Definition \ref{defi:weaksol}. Since $v^{\e_k}$ is divergence free, we have
\begin{gather*}
\int_0^T\int_{\mathbb{R}^d}\nabla\psi\cdot v^{\e_k} dxdt=0.
\end{gather*}
As $\e_k\to \infty$, since $\nabla\psi\in C_c^{\infty}(\mathbb{R}^d\times [0,T))$ and $v^{\e_k}\to v$ in $L^p(0,T; L_{loc}^p(\mathbb{R}^d))$, we find \[
\int_0^T\int_{\mathbb{R}^d}\nabla\psi\cdot v \,dx dt=0.
\]

Now, we dot $\varphi$ which is divergence free in Equation \eqref{eq:pNSReg} and integrate:
\begin{multline*}
\int_0^T\int_{\mathbb{R}^d}v_p^{\e_k}\cdot \varphi_t\,dx dt+\int_0^T\int_{\mathbb{R}^d}\nabla\varphi:J_{\e_k}*v^{\e_k}\otimes v_p^{\e_k} dx\\
-\nu\int_0^T\int_{\mathbb{R}^d}\nabla\varphi:\nabla v^{\e_k}|\nabla v^{\e_k}|^{p-2}dxdt
+\int_{\mathbb{R}^d}v_p^{\e_k}(x, 0)\cdot \varphi(x, 0)dx=
\e_k\int_0^T\int_{\mathbb{R}^d}\nabla\varphi\cdot\nabla v_p^{\e_k}dx.
\end{multline*}
Using the convergence in Proposition \ref{pro:subsequenceconv} and Proposition \ref{pro:chi}, we take the limit $k\to \infty$ and find that the first equation in Definition \ref{defi:weaksol} is satisfied as well. 
Hence, $v$ is a weak solution on $[0, T]$.

Since we have
\[
\int_0^{T-h}\|\tau_hv^{\e_k}-v^{\e_k}\|_{L^p(\mathbb{R}^d)}^p dt\le C(p, T)h,
\]
where $C(p,T)$ is independent of $h$. Since $v^{\e_k}\to v$ a.e. in $\mathbb{R}^d\times(0, T)$, by Fatou's lemma
\[
\int_0^{T-h}\|\tau_hv-v\|_{L^p(\mathbb{R}^d)}^p dt\le C(p, T)h.
\]
The time regularity is shown.

Now, we choose $T=n$, $n=1,2, \ldots$. Suppose we have a subsequence $v^{\e_k}$ that tends to a weak solution on $[0, m]$. Then, for $T=m+1$, we pick a subsequence of the current subsequence that  converges as in Proposition \ref{pro:subsequenceconv}. The limit is a weak solution on $[0, m+1)$ and agrees with the weak solution on $[0, m)$. By the standard diagonal argument, we can find a subsequence $v^{\e_k}$ that converges to a function $v\in L_{loc}^{\infty}(0,\infty; L^p(\mathbb{R}^d))\cap L_{loc}^p(0,T; W^{1,p}(\mathbb{R}^d))$, which is a weak solution on any interval $[0, T), T\in (0,\infty)$, and therefore a global weak solution.
\end{proof}

\section*{Acknowledgements}
The work of J.-G. Liu is partially supported by KI-Net NSF RNMS11-07444 and NSF DMS-1514826. We also thank the anonymous referee for insightful suggestions.

\bibliographystyle{unsrt}
\bibliography{NumAnaPde}

\begin{thebibliography}{10}

\bibitem{gangbomccann95}
W.~Gangbo and R.~J. McCann.
\newblock Optimal maps in {M}onge's mass transport problem.
\newblock {\em Comptes Rendus de l'Academie des Sciences-Serie I-Mathematique},
  321(12):1653, 1995.

\bibitem{bb99}
J.-D. Benamou and Y.~Brenier.
\newblock A numerical method for the optimal time-continuous mass transport
  problem and related problems.
\newblock {\em Contemp. Math.}, 226:1--12, 1999.

\bibitem{villani08}
C.~Villani.
\newblock {\em Optimal transport: old and new}, volume 338.
\newblock Springer Science \& Business Media, 2008.

\bibitem{santambrogio15}
F.~Santambrogio.
\newblock Optimal transport for applied mathematicians.
\newblock {\em Prog. Nonlin.}, 87, 2015.

\bibitem{ss2013}
B.~Schmitzer and C.~Schn{\"o}rr.
\newblock Contour manifolds and optimal transport.
\newblock {\em arXiv preprint arXiv:1309.2240}, 2013.

\bibitem{acb17}
M.~Arjovsky, S.~Chintala, and L.~Bottou.
\newblock Wasserstein {GAN}.
\newblock {\em arXiv preprint arXiv:1701.07875}, 2017.

\bibitem{bb00}
J.-D. Benamou and Y.~Brenier.
\newblock A computational fluid mechanics solution to the {M}onge-{K}antorovich
  mass transfer problem.
\newblock {\em Numer. Math.}, 84(3):375--393, 2000.

\bibitem{lps16}
J.-G. Liu, R.~L. Pego, and D.~Slepcev.
\newblock Least action principles for incompressible flows and optimal
  transport between shapes.
\newblock {\em arXiv:1604.03387}, 2016.

\bibitem{arnold66}
V.~Arnold.
\newblock Sur la g{\'e}om{\'e}trie diff{\'e}rentielle des groupes de lie de
  dimension infinie et ses applications {\`a} l'hydrodynamique des fluides
  parfaits.
\newblock In {\em Annales de l'institut Fourier}, volume~16, pages 319--361,
  1966.

\bibitem{stokes1847}
G.~G. Stokes.
\newblock On the theory of oscillatory waves.
\newblock {\em Trans Cambridge Philos Soc}, 8:441--473, 1847.

\bibitem{bhl93}
J.~T. Beale, T.~Y. Hou, and J.~S. Lowengrub.
\newblock Growth rates for the linearized motion of fluid interfaces away from
  equilibrium.
\newblock {\em Commun. Pur. Appl. Math.}, 46(9):1269--1301, 1993.

\bibitem{wu97}
S.~Wu.
\newblock Well-posedness in {S}obolev spaces of the full water wave problem in
  2-{D}.
\newblock {\em Invent. Math.}, 130(1):39--72, 1997.

\bibitem{wu99}
S.~Wu.
\newblock Well-posedness in {S}obolev spaces of the full water wave problem in
  3-{D}.
\newblock {\em J. Am. Math. Soc.}, 12(2):445--495, 1999.

\bibitem{shatahzeng08}
J.~Shatah and C.~Zeng.
\newblock Geometry and a priori estimates for free boundary problems of the
  {E}uler's equation.
\newblock {\em Commun. Pur. Appl. Math.}, 61(5):698--744, 2008.

\bibitem{cmt94}
P.~Constantin, A.~J. Majda, and E.~Tabak.
\newblock Formation of strong fronts in the 2{D} quasigeostrophic thermal
  active scalar.
\newblock {\em Nonlinearity}, 7(6):1495, 1994.

\bibitem{knv07}
A.~Kiselev, F.~Nazarov, and Al. Volberg.
\newblock Global well-posedness for the critical 2{D} dissipative
  quasi-geostrophic equation.
\newblock {\em Invent. Math.}, 167(3):445--453, 2007.

\bibitem{cv10}
L.~A. Caffarelli and A.~Vasseur.
\newblock Drift diffusion equations with fractional diffusion and the
  quasi-geostrophic equation.
\newblock {\em Ann. Math.}, pages 1903--1930, 2010.

\bibitem{loeper06}
G.~Loeper.
\newblock A fully nonlinear version of the incompressible euler equations: the
  semigeostrophic system.
\newblock {\em SIAM J. Math. Anal.}, 38(3):795--823, 2006.

\bibitem{los69}
O.~A. Ladyzhenskaya and R.~A. Silverman.
\newblock {\em The mathematical theory of viscous incompressible flow}.
\newblock Gordon \& Breach New York, 1969.

\bibitem{leibeson83}
L.~S. Leibenson.
\newblock General problem of the movement of a compressible fluid in a porous
  medium.
\newblock {\em Izv. Akad. Nauk Kirg. SSSR}, 9:7--10, 1983.

\bibitem{breit2017}
D.~Breit.
\newblock {\em Existence theory for generalized Newtonian fluids}.
\newblock Academic Press, 2017.

\bibitem{breit2015}
D.~Breit.
\newblock Existence theory for stochastic power law fluids.
\newblock {\em J. Math. Fluid Mech.}, 17(2):295--326, 2015.

\bibitem{jiasverak15}
H.~Jia and V.~Sverak.
\newblock Are the incompressible 3{D} {N}avier--{S}tokes equations locally
  ill-posed in the natural energy space?
\newblock {\em J. Funct. Anal.}, 268(12):3734--3766, 2015.

\bibitem{cl12}
X.~Chen and J.-G. Liu.
\newblock Two nonlinear compactness theorems in ${L}^p(0, {T}; {B})$.
\newblock {\em Appl. Math. Lett.}, 25(12):2252--2257, 2012.

\bibitem{bernis88}
F.~Bernis.
\newblock Existence results for doubly nonlinear higher order parabolic
  equations on unbounded domains.
\newblock {\em Mathematische Annalen}, 279(3):373--394, 1988.

\bibitem{mm12}
A.~Matas and J.~Merker.
\newblock Existence of weak solutions to doubly degenerate diffusion equations.
\newblock {\em Appl. Math.}, 57(1):43--69, 2012.

\bibitem{aac10}
M.~Agueh, A.~Blanchet, and J.~A. Carrillo.
\newblock Large time asymptotics of the doubly nonlinear equation in the
  non-displacement convexity regime.
\newblock {\em J. Evol. Equ.}, 10(1):59--84, 2010.

\bibitem{cl16}
W.~Cong and J.-G. Liu.
\newblock A degenerate p-{L}aplacian {K}eller-{S}egel model.
\newblock {\em Kinet. Relat. Mod.}, 9(4), 2016.

\bibitem{noether71}
E.~Noether.
\newblock Invariant variation problems.
\newblock {\em Transport Theory and Statistical Physics}, 1(3):186--207, 1971.

\bibitem{arnold13}
V.~I. Arnol'd.
\newblock {\em Mathematical methods of classical mechanics}, volume~60.
\newblock Springer Science \& Business Media, 2013.

\bibitem{galdi11}
G.~P. Galdi.
\newblock {\em An introduction to the mathematical theory of the
  {N}avier-{S}tokes equations: steady-state problems}.
\newblock Springer Science \& Business Media, 2011.

\bibitem{beirao09}
H.~B. da~Veiga.
\newblock {N}avier-{S}tokes equations with shear thinning viscosity.
  {R}egularity up to the boundary.
\newblock {\em J. Math. Fluid Mech.}, 11(2):258--273, 2009.

\bibitem{barenblatt52}
G.~I. Barenblatt.
\newblock On self-similar motions of compressible fluids in porous media.
\newblock {\em Prikl. Math.(in Russian)}, 16:679--698, 1952.

\bibitem{damascelli98}
L.~Damascelli.
\newblock Comparison theorems for some quasilinear degenerate elliptic
  operators and applications to symmetry and monotonicity results.
\newblock In {\em Annales de l'Institut Henri Poincare (C) Non Linear
  Analysis}, volume~15, pages 493--516. Elsevier, 1998.

\bibitem{maitre03}
E.~Maitre.
\newblock On a nonlinear compactness lemma in ${L}^p(0,{T}; {B})$.
\newblock {\em Int. J. Math. Math. Sci.}, 2003(27):1725--1730, 2003.

\bibitem{simon86}
J.~Simon.
\newblock Compact sets in the space ${L}^p(0, {T}; {B})$.
\newblock {\em Annali di Matematica pura ed applicata}, 146(1):65--96, 1986.

\bibitem{fks07}
R.~Farwig, H.~Kozono, and H.~Sohr.
\newblock The {H}elmholtz decomposition in arbitrary unbounded domains-a theory
  beyond.
\newblock {\em Proceedings of Equadiff}, pages 77--85, 2007.

\bibitem{mb86}
V.~N. Maslennikova and M.~E. Bogovskii.
\newblock Elliptic boundary value problems in unbounded domains with noncompact
  and nonsmooth boundaries.
\newblock {\em Milan J. Math.}, 56(1):125--138, 1986.

\bibitem{cz52}
A.~P. Calderon and A.~Zygmund.
\newblock On the existence of certain singular integrals.
\newblock {\em Acta Math.}, 88(1):85--139, 1952.

\bibitem{stein16}
E.~M. Stein.
\newblock {\em Singular integrals and differentiability properties of
  functions}, volume~30.
\newblock Princeton university press, 2016.

\bibitem{barbu10}
V.~Barbu.
\newblock {\em Nonlinear differential equations of monotone types in {B}anach
  spaces}.
\newblock Springer Science \& Business Media, 2010.

\end{thebibliography}
\end{document}